\newcommand{\dateofsubmission}{03.09.12}
\newtheorem{theorem}{Theorem}[section]
\newtheorem{definition}[theorem]{Definition}
\newtheorem{lemma}[theorem]{Lemma}
\newtheorem{question}[theorem]{Question}
\newtheorem{example}[theorem]{Example}
\DeclareMathOperator{\str}{str}
\DeclareMathOperator{\sstr}{sstr}
\DeclareMathOperator{\vc}{vc}
\DeclareMathOperator{\dvc}{dvc}
\DeclareMathOperator{\set}{SET}
\DeclareMathOperator{\fshift}{fullshift}
\DeclareMathOperator{\dshift}{dn}
\DeclareMathOperator{\sign}{sign}
\DeclareMathOperator{\spn}{span}
\newcommand{\ignore}[1]{}
\newcommand{\powset}{\mathcal{P}}
\newcommand{\setdef}[2]{\{#1~~\vert~~#2\}}
\renewcommand{\restriction}{\mathord{\upharpoonright}}
\newcommand{\s}{\mathbb{S}}
\newcommand{\sysnot}[1]{\mathbb{#1}}
\newcommand{\system}[2]{\langle #1,\{0,1\}^{#2}\rangle }
\newcommand{\sys}[2]{\langle #1,#2\rangle }
\newcommand{\restr}[2]{#1\restriction_{#2}}
\newcommand{\normalize}[1]{n(#1)}
\newcommand{\normalizealone}{n}
\newcommand{\syscup}{~\uplus~}%{~\dot\cup~}
\newcommand{\syscap}{~\nplus~}%{~\dot\cap~}
\newcommand{\bigsyscup}{~\biguplus~}%{~\dot\bigcup~}
\newcommand{\bigsyscap}{~\bignplus~}%{~\dot\bigcap~}
\newcommand{\inter}[1]{\bignplus_{#1}}%{\dot\bigcap_{#1}}
\newcommand{\union}[1]{\biguplus_{#1}}%{\dot\bigcup_{#1}}
\newcommand{\mymerge}{\star}
\newcommand{\defeq}{\stackrel{def}{=}}
\newcommand{\deftext}[1]{{\bf {#1}}}
\newcommand{\smaller}{\preceq}
\begin{document}

\pagestyle{empty}

%\begin{center}
%\Large
%\textsf{\textbf{\huge Shattering extremal systems\\[3mm]}}
%\vspace*{25mm}
%{\huge Master Thesis}\\[5mm]
%zur Erlangung des Grades des\\
%Doktors der Ingenieurwissenschaften\\
%der Naturwissenschaftlich-Technischen Fakult{\"a}ten\\
%der Universit{\"a}t des Saarlandes\\
%\vspace*{20mm}
%submitted by\\[5mm]
%{\huge Shay Moran}\\[55mm]
%Saarbr{\"u}cken\\ 
%2012\\

%\end{center}
%\clearpage

\vfill\vfill\vfill
\begin{center}
	\LARGE\textbf{Saarland University}
\end{center}
\begin{center}
	\LARGE\textbf{Faculty of Natural Sciences and Technology I}
\end{center}
\begin{center}
	\LARGE\textbf{Department of Computer Science}
\end{center}
\vfill\vfill\vfill
\begin{center}
	\Large Master thesis
\end{center}
\vfill
\begin{center}
	\Huge\textbf{Shattering Extremal Systems}
\end{center}
\vfill\vfill\vfill
\begin{center}
	submitted by\\[1mm]
	\large Shay Moran
\end{center}
\vfill
\begin{center}
	submitted\\
	\large\dateofsubmission
\end{center}
\vfill\vfill\vfill
\begin{center}
	Supervisor\\[1mm]
	\large Prof. Dr. Ami Litman
\end{center}
\vfill
\begin{center}
	Supervisor\\[1mm]
	\large Prof. Dr. Kurt Mehlhorn
\end{center}
\vfill
\begin{center}
	Reviewers\\[1mm]
	\large Prof. Dr. Kurt Mehlhorn\\
	\large Prof. Dr. Ami Litman
\end{center}
\strut
\clearpage

%\strut
%\vfill
%\begin{center}
%	\Large\textbf{Eidesstattliche Erkl{\"a}rung}
%\end{center}
%Ich erkl{\"a}re hiermit an Eides Statt, dass ich die  vorliegende Arbeit selbstst{\"a}ndig verfasst und keine 
%anderen als die angegebenen Quellen und Hilfsmittel verwendet habe.

%\begin{center}
%	\Large\textbf{Statement in Lieu of an Oath}
%\end{center}
%I hereby confirm that I have written this thesis on my own and that I have not used any other media or 
%materials than the ones referred to in this thesis. 

%\vspace{2cm}

%\begin{center}
%	\Large\textbf{Einverst{\"a}ndniserkl{\"a}rung}
%\end{center}
%Ich bin damit einverstanden, dass meine (bestandene) Arbeit in beiden Versionen in die Bibliothek der 
%Informatik aufgenommen und damit ver{\"o}ffentlicht wird.

%\begin{center}
%	\Large\textbf{Declaration of Consent}
%\end{center}
%I agree to make both versions of my thesis (with a passing grade) accessible to the public by having 
%them added to the library of the Computer Science Department. 

%\vspace{3cm}
%\noindent Saarbr{\"u}cken, $\underset{\text{(Datum / Date)}}{\rule{3.5cm}{1pt}}$
%\hfill
%$\underset{\text{(Unterschrift / Signature)}}{\rule{3.5cm}{1pt}}$

%\vfill
%\strut
%\clearpage

%\section*{Abstract}

%1500 Zeichen!

%\input{abstract}

%\clearpage

%\vspace*{\fill} % push it downwards

%\section*{Acknowledgments}

%\input{acknowledgments}

%\pdfbookmark{\contentsname}{toc}
\pagestyle{headings}
\setcounter{page}{1}
\pagenumbering{roman}
\tableofcontents

\cleardoublepage

\setcounter{page}{1}
\pagenumbering{arabic}
%\documentclass[letterpaper, 16pt]{article}
%\input{macros}

%\begin{document}

\section{Introduction}
The {\it Shatters} relation and the {\it VC dimension} have been investigated since the early seventies (starting
with \cite{Sauer,Shelah,VC1}). These concepts have found numerous applications in
statistics, combinatorics, learning theory and computational geometry.
%The {\it Strong-Shattering} relation can be viewed as a dual to the Shattering relation.

{\it Shattering extremal systems} are set-systems with a very rich structure and many different characterizations. The goal of this paper is to elaborate on the structure of these systems. They were discovered several times and independently by several groups of researchers. Lawrence in \cite{Law} is the first who introduced them in his study of convex sets. Intrestingly, the definition he gave does not require the concept of shatters. Independently, Bollob{\'a}s et al in \cite{BR95} have discovered these systems, using the {\it shatters} relation (a.k.a {\it traces}); furthermore, Bollob{\'a}s et al are the first who introduced the relation of {\it strongly-shatters} (a.k.a {\it strongly-traces}) and characterized Shattering extremal systems with the shatters and strongly-shatters relations. Dress et al, independently of Bollobas et al, have discovered the same characterization and established the equivalence to the characterization given by Lawrence.

It is an interesting phenomena that, in the past thirty years, different characterizations of these systems were discovered in different fields of pure and applied mathematics. Strangely, there seem to be no connections between the different groups that studied these systems. We hope to link together these different groups by this work and as a result to enhance the research of these systems. Here is a list of some of the contexts in which these systems were discovered: Functional analysis \cite{Pajor}, Discrete-geometry (\cite{Law}), Phylogenetic Combinatorics (\cite{Dress1,Dress2}) and Extremal Combinatorics (\cite{BR89,BR95}). Moreover, as this work shows, the class of Shattering extremal systems naturally extends the class of {\it Maximum systems}\footnote{set-systems that meet Sauer's inequality with equality}. Maximum systems occur in Geometry \cite{Welzl} and in Learning theory \cite{WF95,WK07,RR1,RR2,RR3}. Thus, it is not unlikely to find usage of Shattering extremal systems in these fields. %Infact, we are working right now on a paper that will demonstrate the usage of these systems in learning theory.

\subsection{Our results}
We present new definitions of {\it shatters} and {\it strongly-shatters} that differ from each other only in the order of the quantifiers. We demonstrate that at least some of the known duality between these two concepts is due to this transpose of quantifiers. We shed additional light on this mysterious phenomena of duality via a mechanical transformation that, sometimes, translates claims to dual claims and proofs to dual proofs.

Two unary operators, which we denote $\inter{}$ and $\union{}$, were extensively used under many different notations in the literature. These operators preserve the property of shattering extremality and the property of being a maximum system. Hence, these operarators were studied in both contexts. Many previous works observed a certain duality between these two operators. We demonstrate that this duality stems from the famous duality of Boolean algebra. Moreover, we reveal that a previously known lemma concerning duality is just the De-Morgan laws.

We present the famous {\it down shifting} operators in the terminology of oblivious sorting algorithms. The most common use of these operators is to apply a sequence of them on a system until a fixed point is reached. Our presentation of down-shifting sheds light on this process and relates it to a basic problem in the theory of sorting.

We introduce new characterizations of the {\it shatters} and {\it strongly shatters} relations. One characterization is by the operators $\inter{}$ and $\union{}$ and the other characterization is by down-shifting operators. As a corollary we obtain simple proofs to two known characterizations of Shattering extremal systems via commutativity between $\inter{}$ and $\union{}$ and commutativity between down-shifting operators. Moreover, we introduce a weaker form of Shattering-Extremality and establish equivalences of this form with a weaker form of the above commutativities.

We show that, similarly to Maximum systems, Shattering extremal systems occur naturally in Geometry as a combinatorial feature of an arrangement of oriented hyper-planes. We give a geometrical interpretation of {\it shattering} and {\it strong-shattering} in this setup. We also introduce the class of {\it ``Convex systems''} which form a simple and natural generalization of the classes studied by Welzl (\cite{Welzl}) and of the classes studied by Lawrence (\cite{Law}). We pose as an open question whether every Shattering Extremal system is a convex system..

Finally, we use this theory to develop a machinery for proving certain equalities and inequalities concerning undirected graphs and their orientations. As an example of this machinery we will prove the following inequality:\\
Let $G=(V,E)$ be an undirected graph and let $s,t$ be two distinct vertices in $V$. Then, the number of orientations of $E$ that contain a directed path from $s$ to $t$ is the same as the number of subgraphs\footnote{i.e. subsets of $E$} of $G$ that contain an undirected $s-t$ path.\\
One advantage of this machinery is that it demonstrates an application of {\it Strong shattering} which is exceptional since most applications of this theory use {\it shattering}.

%\bibliographystyle{plain}
%\bibliography{Shay}

%\end{document}

\newpage
\section{Shattering Extremal systems - A first date}\label{sec:intro}
\subsection{Basic definitions}
This work studies finite set-systems and thus all structures discussed in this paper are finite.
A standard definition of a set-system is a pair $(T,X)$ where $X$ is a set and $T \subseteq \powset(X)$. For reasons that will become clear later, we choose a different definition for set-systems. Also, we prefer to call these structures ``systems'' rather than ``set-systems''.
\begin{definition}[System]\label{def:set-sys} 
A \deftext{system} is a pair, $\s=\system{S}{X}$, where $X$ is a set and $S\subseteq \{0,1\}^X$. For a system $\s=\system{S}{X}$:
\begin{itemize}
	\item{Let $S(\s)$, $\dim(\s)$ and $C(\s)$ denote $S$, $X$ and $\{0,1\}^X$ respectively.}
	\item{Let $|\s|$ denote $|S|$.}
	\item{The \deftext{complement} of $\s$, denoted $\neg\s$, is the system defined by:\begin{center} $\neg\s\defeq\sys{C(\s)-S(\s)}{C(\s)}.$\end{center}}
	\item{The elements of $C(\s)$ are called {\it vertices}}
\end{itemize}
\end{definition}
Note that for every set $X$ there are exactly $2^{2^{\lvert X\rvert}}$ systems, $\s$, with $C(\s)=\{0,1\}^X$.\\
In particular, there are exactly two systems for $X=\emptyset$, one of them is empty and the other contains one vertex.
Let $\mathcal{K}_0$ denote the former and let $\mathcal{K}_1$ denote the latter system.
\begin{definition}
Let $f,g$ be arbitrary functions and let $A\subseteq Dom(f)\cap Dom(g)$.
	\begin{itemize}
	\item{\deftext{$f$ agrees with $g$ on $A$} means that $ f(x)=g(x)$ for all $x\in A$.}
	\item{\deftext{$f$ agrees with $g$} means that $f$ agrees with $g$  on $Dom(f)\cap Dom(g)$}
	\end{itemize} 
\end{definition}

\begin{definition}[Cube]
Let $X$ be a set and let $Y\subseteq X$. A \deftext{$Y$-cube} of $\{0,1\}^X$ is an equivalence class of the following equivalence relation on $\{0,1\}^X$: ``$u$ agrees with $v$ on $X-Y$''
	\begin{itemize}
	\item{Let $C$ be a $Y$-cube of $\{0,1\}^X$. The \deftext{dimensions of} $C$ are defined to be the set $Y$ and denoted by $\dim(C)$.}
	\end{itemize}
\end{definition}
Note that there are $2^{|X-Y|}$ $Y$-cubes, they are disjoint and cover all of $\{0,1\}^X$.

\subsection{Lopsided systems}
Lawrence introduced the concept of lopsided systems as follows
\begin{definition}[Lawrence \cite{Law}]\label{def:lopsided}
A system $\s$ is called \deftext{lopsided} if for every $\{X',X''\}$, a partitioning of $\dim(\s)$, exactly one of the following statements is true:
\begin{enumerate}
\item{$S(\s)$ contains an $X'$-cube} 
\item{$S(\neg\s)$ contains a $X''$-cube}
\end{enumerate}
\end{definition}
The following important property of lopsided set-sytems is an immediate corollary of the definition:
\begin{theorem}[Complementing]\label{thm:lopsided and complementing}
A system, $\s$, is lopsided if and only if $\neg\s$ is lopsided.
\end{theorem}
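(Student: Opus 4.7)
The plan is to unfold both sides of the biconditional directly from Definition \ref{def:lopsided} and observe that the defining condition for $\s$ and the defining condition for $\neg\s$ are the same statement, up to (i) swapping the two blocks of the partition and (ii) applying the involution $\neg\neg\s = \s$.

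More concretely, I would first record two trivial facts that I will use without comment: $\dim(\neg\s) = \dim(\s)$ (immediate from Definition \ref{def:set-sys}, since $C(\neg\s) = C(\s)$), and $\neg(\neg\s) = \s$ (because $C(\s) - (C(\s) - S(\s)) = S(\s)$). Next I would write out what it means for $\neg\s$ to be lopsided: for every partition $\{X',X''\}$ of $\dim(\neg\s) = \dim(\s)$, exactly one of ``$S(\neg\s)$ contains an $X'$-cube'' or ``$S(\neg(\neg\s)) = S(\s)$ contains an $X''$-cube'' holds.

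The key observation is then that since a partition $\{X',X''\}$ is an \emph{unordered} pair, relabelling the blocks $X' \leftrightarrow X''$ does not change which partitions are quantified over. Performing this relabelling turns the condition for $\neg\s$ being lopsided into: for every partition $\{X',X''\}$ of $\dim(\s)$, exactly one of ``$S(\s)$ contains an $X'$-cube'' or ``$S(\neg\s)$ contains an $X''$-cube'' holds, which is exactly the condition for $\s$ to be lopsided. This proves both implications simultaneously.

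I do not expect a real obstacle here; the whole content of the lemma is that the definition is \emph{formally} symmetric in $\s$ and $\neg\s$ once one notices that $\neg$ is an involution and that $\{X',X''\}$ is unordered. The only thing to be careful about is to write ``exactly one of (1),(2)'' as the symmetric statement it really is (neither (1) nor (2) is privileged), so that the swap $X' \leftrightarrow X''$ is clearly legitimate.
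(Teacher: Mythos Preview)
Your proposal is correct and matches the paper's treatment: the paper simply states that Theorem~\ref{thm:lopsided and complementing} ``is an immediate corollary of the definition'' without giving any further argument. What you have written is exactly the unpacking of that remark, namely that Definition~\ref{def:lopsided} is symmetric under $\s\mapsto\neg\s$ once one uses $\neg\neg\s=\s$ and that $\{X',X''\}$ is an unordered pair.
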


\subsection{Shatters and Strongly-Shatters}
\begin{definition}[Merging]
Let $f\in \{0,1\}^X$ and $g\in \{0,1\}^Y$ where $X,Y$ are disjoint.
We define $f\mymerge g$  to be the unique function in $\{0,1\}^{X\cup Y}$ that agrees with both $f$ and $g$.
\end{definition}
Note that $\mymerge$ is a commutative and associative operation.

Let $\s$ be a system, let $X=\dim(\s)$ and let $Y \subseteq \dim(\s)$.
\begin{definition}[Shatters]
We say that $\s$ \deftext{shatters} Y if $$({\forall f\in\{0,1\}^Y})({\exists g\in\{0,1\}^{X-Y}}): g\mymerge f\in S(\s)$$
\end{definition}

\begin{definition}[Strong-shatters]
We say that $\s$ \deftext{strongly shatters} $Y$ if $$({\exists g\in\{0,1\}^{X-Y}})({\forall f\in\{0,1\}^Y}):g\mymerge f\in S$$
\end{definition}

That is, our definitions of {\it strongly shatters} and {\it shatters} are identical except of the order of the two quantifiers. These definitions highlight the duality that was observed by previous works.
For example, a straightforward application of predicate calculus on the above definitions gives the next lemma: 
\begin{lemma}[\cite{BR95,ShatNews}]\label{obs:dualStrSstr}
Let $\s$ be a system and let $\{X',X''\}$, be a partitioning of $\dim(\s)$. Then exactly one of the following statements is true:
	\begin{enumerate}
	\item{$\s$ shatters $X'$}
	\item{$\neg\s$ strongly shatters $X''$}
	\end{enumerate}
\end{lemma}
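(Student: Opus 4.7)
The plan is to verify that the two statements are exact logical negations of each other, so that the "exactly one holds" dichotomy follows from the law of excluded middle. This is purely a predicate-calculus manipulation, so no combinatorial insight is needed; I just need to be careful about which side of the partition each quantifier ranges over.

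First I would expand the definition of "$\s$ shatters $X'$" as
\[
(\forall f\in\{0,1\}^{X'})(\exists g\in\{0,1\}^{X''})\colon\ g\mymerge f\in S(\s),
\]
using that $\dim(\s)-X'=X''$. Applying the standard equivalences $\neg\forall\equiv\exists\neg$ and $\neg\exists\equiv\forall\neg$, its negation is
\[
(\exists f\in\{0,1\}^{X'})(\forall g\in\{0,1\}^{X''})\colon\ g\mymerge f\notin S(\s).
\]
By the definition of $\neg\s$, the condition $g\mymerge f\notin S(\s)$ is the same as $g\mymerge f\in S(\neg\s)$ (both sides of the merge lie in $C(\s)=C(\neg\s)$).

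Next I would expand "$\neg\s$ strongly shatters $X''$". Since $\dim(\neg\s)=\dim(\s)=X'\cup X''$, the definition reads
\[
(\exists g\in\{0,1\}^{X'})(\forall f\in\{0,1\}^{X''})\colon\ g\mymerge f\in S(\neg\s).
\]
The only difference between this statement and the negation obtained above is the names of the bound variables and the order of the arguments of $\mymerge$. Since $\mymerge$ is commutative (noted right after its definition), $g\mymerge f=f\mymerge g$, and renaming the bound variables shows the two formulas are literally identical.

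Therefore statement~(2) is the negation of statement~(1), so in every case exactly one of them is true. The main (and only) subtlety to watch is the bookkeeping of the domains $X'$ versus $X''$ after negation—the quantifier over $X'$ becomes existential and the one over $X''$ becomes universal, which is precisely the order required by the strong-shattering definition applied to the complementary block of the partition. No further work is required.
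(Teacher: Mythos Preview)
Your proof is correct and is exactly the approach the paper indicates: it states that the lemma is ``a straightforward application of predicate calculus on the above definitions,'' and you have carried out precisely that computation, using the quantifier-swap definitions of shatters and strongly-shatters together with the commutativity of $\mymerge$ and the definition of $\neg\s$.
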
 
Note the similarity between Lemma \ref{obs:dualStrSstr} and definition \ref{def:lopsided}. That is, the former is derived from the latter by replacing ``{\it shatters}'' in Statement $1$ with ``{\it strongly-shatters}''.

A natural variant of ``{\it Lopsided}'' is the following concept:
%\newpage
\begin{definition}[Dual lopsided system]
A system $\s$ is called \deftext{Dual lopsided} if for every $\{X',X''\}$, a partitioning of $\dim(\s)$, exactly one of the following statements is true:
	\begin{enumerate}
	\item{$\s$ shatters $X'$} 
	\item{$\neg\s$ shatters $X''$}
	\end{enumerate}
\end{definition}

Two subsets of $\powset(\dim(\s))$ are associated with $\s$: 
\begin{itemize}
	\item{$\str(\s)\defeq\setdef{Y\subseteq \dim(\s)}{Y \mbox{ is shattered by }\s}$}
	\item{$\sstr(\s)\defeq\setdef{Y\subseteq \dim(\s)}{Y \mbox{ is strongly shattered by }\s}$}
\end{itemize}
Obviously, both $\str(\s)$ and $\sstr(\s)$ are closed under subset relation and $\sstr(\s)\subseteq \str(\s)$. This work studies systems for which $\sstr(\s)=\str(\s)$.

\subsection{Shattering extremal systems}
\begin{definition}[Shattering-Extremal system]
$\s$ is \deftext{Shattering-Extremal}\footnote{This is not the standard definition of Shattering Extremal systems. The standard definition and its equivalence to this definition will be discussed in Section \ref{sec:SE systems and Sandwich theorem}} (in abbreviation: SE) if it satisfies $$\sstr(\s)=\str(\s).$$
The property ``$\s$ is SE'' is sometimes refered to by the expression ``$SE(\s)$''.
\end{definition}

\subsection{Shattering extremality = lopsidedness}
This section proves the following theorem:
\begin{theorem}\label{thm:SE=Lopsided}
The following statements are equivalent for a system $\s$:
	\begin{enumerate}
	\item{$SE(\s)$}
	\item{$SE(\neg\s)$}
	\item{$\s$ is lopsided}
	\item{$\s$ is dual lopsided}
	\end{enumerate}
\end{theorem}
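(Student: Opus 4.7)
My plan is to reduce everything to Lemma~\ref{obs:dualStrSstr} together with a small reformulation of lopsidedness. The first observation I would record is that the condition ``$S(\s)$ contains a $Y$-cube'' is literally the definition of ``$\s$ strongly shatters $Y$'': a $Y$-cube is an equivalence class determined by a $g\in\{0,1\}^{X-Y}$, and it sits inside $S(\s)$ iff $g\mymerge f\in S(\s)$ for every $f\in\{0,1\}^Y$. Hence ``$\s$ is lopsided'' is exactly the statement that for every partition $\{X',X''\}$ of $\dim(\s)$, exactly one of ``$\s$ strongly shatters $X'$'' and ``$\neg\s$ strongly shatters $X''$'' holds.

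Next I would prove $1\Leftrightarrow 3$. Fix a partition $\{X',X''\}$. By Lemma~\ref{obs:dualStrSstr}, exactly one of ``$\s$ shatters $X'$'' and ``$\neg\s$ strongly shatters $X''$'' holds; equivalently, ``$\s$ shatters $X'$'' is the negation of ``$\neg\s$ strongly shatters $X''$''. Comparing this with the reformulation of lopsidedness above, $\s$ is lopsided iff for every such partition the predicates ``$\s$ shatters $X'$'' and ``$\s$ strongly shatters $X'$'' coincide. Since partitions $\{X',X''\}$ range over all choices of $X'\subseteq\dim(\s)$, this is precisely the equality $\str(\s)=\sstr(\s)$, i.e.\ $SE(\s)$.

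For $2\Leftrightarrow 4$ I would repeat the same argument with $\s$ replaced by $\neg\s$. Apply Lemma~\ref{obs:dualStrSstr} to $\neg\s$ and use $\neg(\neg\s)=\s$: for every partition $\{X',X''\}$, exactly one of ``$\neg\s$ shatters $X''$'' and ``$\s$ strongly shatters $X'$'' holds. Substituting this into the definition of dual lopsidedness converts the xor of ``$\s$ shatters $X'$'' and ``$\neg\s$ shatters $X''$'' into the statement that ``$\neg\s$ shatters $X''$'' and ``$\neg\s$ strongly shatters $X''$'' agree for every partition, i.e.\ $\str(\neg\s)=\sstr(\neg\s)$, which is $SE(\neg\s)$.

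Finally, to close the loop $1\Leftrightarrow 2$ I would invoke Theorem~\ref{thm:lopsided and complementing}: from $1\Leftrightarrow 3$ applied to $\neg\s$ we get $SE(\neg\s)\Leftrightarrow\neg\s$ is lopsided, and complementing tells us $\neg\s$ is lopsided iff $\s$ is lopsided, so $SE(\s)\Leftrightarrow SE(\neg\s)$. I do not expect any real obstacle here; the whole proof is a bookkeeping exercise once one recognises that lopsidedness is phrased in terms of strong shattering and that Lemma~\ref{obs:dualStrSstr} provides the exact bridge between ``shatters'' on one side and ``strongly shatters'' on the complementary side. The only point that deserves care is keeping the partition $\{X',X''\}$ and the system being passed to Lemma~\ref{obs:dualStrSstr} straight when deriving $2\Leftrightarrow 4$.
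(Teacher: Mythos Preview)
Your approach is essentially the paper's: it proves $1\Leftrightarrow 3$ as Lemma~\ref{lem:lopsided iff SE} by exactly your argument (rewrite ``contains a $Y$-cube'' as ``strongly shatters $Y$'', then invoke Lemma~\ref{obs:dualStrSstr}), obtains $1\Leftrightarrow 4$ as the dual Lemma~\ref{lem:Dual lopsided iff SE}, and closes via Theorem~\ref{thm:lopsided and complementing}.

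There is one bookkeeping slip in your $2\Leftrightarrow 4$ paragraph. Applying Lemma~\ref{obs:dualStrSstr} to $\neg\s$ as you describe gives: ``$\neg\s$ shatters $X''$'' holds iff ``$\s$ strongly shatters $X'$'' fails. Substituting \emph{that} into the dual-lopsided xor of ``$\s$ shatters $X'$'' and ``$\neg\s$ shatters $X''$'' yields that ``$\s$ shatters $X'$'' and ``$\s$ strongly shatters $X'$'' agree for every partition, i.e.\ $SE(\s)$---so what you actually derive is $1\Leftrightarrow 4$, not $2\Leftrightarrow 4$. To land on $SE(\neg\s)$ you should instead apply the lemma to $\s$ itself and substitute for the first disjunct ``$\s$ shatters $X'$''. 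Either equivalence suffices once you have $1\Leftrightarrow 3$ and Theorem~\ref{thm:lopsided and complementing}, so the overall argument is sound; but this is exactly the bookkeeping you warned yourself about.
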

The equivalence of the first three statements was proven by Dress et al in \cite{Dress1}.
\begin{lemma}\label{lem:lopsided iff SE}
Let $\s$ be a system. $\s$ is lopsided $\iff$ $\sstr(\s)=\str(\s)$
\end{lemma}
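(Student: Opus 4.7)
The plan is to reformulate the two clauses of the definition of lopsidedness in the vocabulary of (strong) shattering, and then apply Lemma \ref{obs:dualStrSstr} to read off the equivalence. Throughout I write $X = \dim(\s)$.

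First I would observe that for any partition $\{X', X''\}$ of $X$, an $X'$-cube of $\{0,1\}^X$ is precisely a set of the form $\{g \mymerge f : f \in \{0,1\}^{X'}\}$ for a fixed $g \in \{0,1\}^{X''}$. Consequently ``$S(\s)$ contains an $X'$-cube'' unfolds to
\[
(\exists g \in \{0,1\}^{X''})(\forall f \in \{0,1\}^{X'}) : g \mymerge f \in S(\s),
\]
which is literally the definition of ``$\s$ strongly shatters $X'$''. By the same reasoning (applied to $\neg\s$), ``$S(\neg\s)$ contains an $X''$-cube'' is equivalent to ``$\neg\s$ strongly shatters $X''$''. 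By Lemma \ref{obs:dualStrSstr} the latter is in turn equivalent to ``$\s$ does not shatter $X'$''.

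Therefore the lopsided condition, stated for the partition $\{X', X''\}$, is equivalent to the assertion that exactly one of
\begin{itemize}
\item[(A)] $\s$ strongly shatters $X'$,
\item[(B)] $\s$ does not shatter $X'$,
\end{itemize}
holds. I would then finish by recalling that $\sstr(\s) \subseteq \str(\s)$ always. If $X' \in \str(\s)$ (so (B) fails), then the ``exactly one'' condition forces (A), i.e.\ $X' \in \sstr(\s)$; if $X' \notin \str(\s)$ (so (B) holds), then (A) must fail, which is automatic from $\sstr(\s) \subseteq \str(\s)$. Thus lopsidedness is equivalent to $\str(\s) \subseteq \sstr(\s)$, and combined with the reverse inclusion this is $\str(\s) = \sstr(\s)$. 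Conversely, if $\str(\s) = \sstr(\s)$, then for every $X'$ exactly one of (A), (B) holds, yielding lopsidedness.

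There is no real obstacle here: the content of the lemma is entirely a bookkeeping exercise, with the only non-trivial ingredient being Lemma \ref{obs:dualStrSstr}, which has already been established. The care needed is simply in matching the existential/universal quantifier pattern of ``contains a cube'' against the definitions of shatters and strongly-shatters, and in tracking the complementation that turns ``$\neg\s$ strongly shatters $X''$'' into ``$\s$ does not shatter $X'$''.
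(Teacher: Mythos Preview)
Your proposal is correct and follows essentially the same route as the paper: you translate ``contains an $X'$-cube'' into ``strongly shatters $X'$'', invoke Lemma~\ref{obs:dualStrSstr} to convert ``$\neg\s$ strongly shatters $X''$'' into ``$\s$ does not shatter $X'$'', and read off the biconditional. The paper's proof is simply a terser presentation of the same chain of equivalences, writing the ``exactly one of'' condition directly as a biconditional rather than unpacking it case by case.
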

\begin{proof}
	\begin{align*}
	Lopsided(\s)	&\iff \forall X\in \powset(\dim(\s)): \dim(\s)-X\notin \sstr(\neg\s)\leftrightarrow X\in \sstr(\s) &\mbox{by definition}\\
								&\iff \forall X\in \powset(\dim(\s)): X\in \str(\s)\leftrightarrow X\in \sstr(\s) &\mbox{by Lemma \ref{obs:dualStrSstr}}\\
								&\iff \sstr(\s)=\str(\s)
	\,.
	\end{align*}
\end{proof}
As noted earlier, there is a certain duality in this theory between shattering and strong shattering. This duality is manifested by a certain mechanical tranformation on text written in ``mathematical english''. The texts of interest are lemmas and proofs and we refer to the transformed texts as dual lemmas and dual proofs. This dual transformation swaps few words and symbols as follows: It swaps the pair ``$\str$'' and ``$\sstr$'', the pair ``Lopsided'' and ``Dual lopsided'', the pair ``$\subseteq$'' and ``$\supseteq$'' and the pair ``$\leq$'' and ``$\geq$''. This dual transformation is useful since, sometimes, the dual of a true lemma is true. Moreover, sometimes the dual of a proof is a valid proof of the dual lemma. Lemma \ref{lem:lopsided iff SE} and its proof are an example to this phenomenon.
\begin{lemma}[Dual of Lemma \ref{lem:lopsided iff SE}]\label{lem:Dual lopsided iff SE}
Let $\s$ be a system. Then \begin{center} $\s$ is dual lopsided $\iff$ $\sstr(\s)=\str(\s)$. \end{center}
\end{lemma}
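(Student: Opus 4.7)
My plan is to mirror the proof of Lemma \ref{lem:lopsided iff SE} via the dual transformation described in the paragraph just after it. That proof is a short chain of iff-equivalences: it unfolds the definition of lopsidedness, invokes Lemma \ref{obs:dualStrSstr} to trade one of the strong-shattering predicates for a shattering predicate on the complementary system, and reads off $\sstr(\s)=\str(\s)$. For the dual I will do exactly the same thing, but starting from the definition of dual lopsidedness instead.

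Concretely, the first step is to unpack ``$\s$ is dual lopsided'' as the assertion that for every $X \subseteq \dim(\s)$, exactly one of $X \in \str(\s)$ or $\dim(\s)-X \in \str(\neg\s)$ holds; equivalently, $X \in \str(\s)$ iff $\dim(\s)-X \notin \str(\neg\s)$.

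The second step is to apply Lemma \ref{obs:dualStrSstr}, but with the roles of $\s$ and $\neg\s$ exchanged (legitimate because $\neg\neg\s=\s$). Applied to the system $\neg\s$ with the partition $\{\dim(\s)-X,\, X\}$, the lemma says that exactly one of ``$\neg\s$ shatters $\dim(\s)-X$'' and ``$\s$ strongly shatters $X$'' holds, so $\dim(\s)-X \notin \str(\neg\s)$ iff $X \in \sstr(\s)$. Substituting this into the equivalence obtained in the first step yields $X \in \str(\s) \leftrightarrow X \in \sstr(\s)$ for every $X \subseteq \dim(\s)$, which is precisely $\str(\s)=\sstr(\s)$.

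I do not anticipate any real obstacle: each step is a direct syntactic manipulation, and in fact I expect the write-up to fit in a three-line aligned display that is the literal image of the proof of Lemma \ref{lem:lopsided iff SE} under the ``swap $\str$ with $\sstr$, swap lopsided with dual lopsided'' transformation. The only spot deserving a moment of care is the invocation of Lemma \ref{obs:dualStrSstr}: one applies it to $\neg\s$ rather than to $\s$, which is exactly the mechanism by which the mechanical duality described in the paper converts the original proof into the dual proof, and observing that this really works is the substantive point of the exercise.
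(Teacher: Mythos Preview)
Your proposal is correct and is exactly the paper's approach: the paper simply asserts that the dual transformation (swapping $\str\leftrightarrow\sstr$ and ``lopsided''$\leftrightarrow$``dual lopsided'') applied to the proof of Lemma~\ref{lem:lopsided iff SE} yields a valid proof, and your three-step chain of equivalences is precisely that dual. The only point you flagged as deserving care---applying Lemma~\ref{obs:dualStrSstr} to $\neg\s$ with the partition $\{\dim(\s)-X,X\}$---is indeed the way the second line of the dual chain is justified.
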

It is easy to verify that the dual of the proof of Lemma \ref{lem:lopsided iff SE} is a valid proof of Lemma \ref{lem:Dual lopsided iff SE}.

These two lemmas establish equivalence between items $1,3,4$ in Theorem \ref{thm:SE=Lopsided}. The rest of the theorem is derived from Theorem \ref{thm:lopsided and complementing}.
  %Lopsidedness and shattering extremality

\newpage
\section{Inequalities}\label{sec:inequalities}
This chapter presents basic inequalities which involve the concepts of {\it Shattering} and {\it Strong-Shattering} and discusses their relation with $SE$ systems.\\
\subsection{VC-dimension and Sauer's lemma}
\begin{definition}[VC-dimension]
The \deftext{VC-dimension} (Vapnik Chervonenkis dimension) of a system $\s$, denoted $\vc(\s)$, is the cardinality of the largest\footnote{As a special case, $\vc(\s)=-1$ when $S(\s)=\emptyset$} subset that is shattered by it. 
\end{definition}
The well-known ``Sauer lemma'' was proved in the 70's by Sauer (\cite{Sauer}), Shelah (\cite{Shelah}) and 
Vapnik and Chervonenkis (\cite{VC1}):
\begin{theorem}[Sauer's lemma]\label{thm:Sauer lemma}
Let $\s$ be a system with $|\dim(\s)|=n$. Then $$|\s|\leq \sum_{i=0}^{\vc(\s)}{{n}\choose{i}}$$
\end{theorem}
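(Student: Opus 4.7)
The plan is to prove the sharper \emph{Pajor inequality} $|\s|\le|\str(\s)|$ and deduce Sauer's bound by a one-line counting step. This is natural in the spirit of the paper, whose central objects are the sets $\str(\s)$ and $\sstr(\s)$ and their interplay with $|\s|$.

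I would prove Pajor by induction on $n=|\dim(\s)|$. The base case $n=0$ is immediate: either $S(\s)=\emptyset$ and both sides are $0$, or $|S(\s)|=1$, $\str(\s)=\{\emptyset\}$, and both sides are $1$. For $n>0$, fix any $x\in\dim(\s)$ and set $X'=\dim(\s)\setminus\{x\}$. For $b\in\{0,1\}$ let
\[
S_b=\{\restr{f}{X'}\ :\ f\in S(\s),\ f(x)=b\}\subseteq\{0,1\}^{X'},
\]
and form two auxiliary systems over $X'$, namely $\s_\cup=\sys{S_0\cup S_1}{X'}$ and $\s_\cap=\sys{S_0\cap S_1}{X'}$. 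Classifying each element of $S(\s)$ by its value at $x$ yields $|\s|=|S_0|+|S_1|=|S_0\cup S_1|+|S_0\cap S_1|$, and the inductive hypothesis applied to both auxiliary systems bounds this above by $|\str(\s_\cup)|+|\str(\s_\cap)|$.

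The crux is the inclusion
\[
\str(\s_\cup)\ \cup\ \bigl\{Y\cup\{x\}\ :\ Y\in\str(\s_\cap)\bigr\}\ \subseteq\ \str(\s).
\]
If $Y\in\str(\s_\cup)$, every $f\in\{0,1\}^Y$ extends inside $S_0\cup S_1$ and that extension lifts to $S(\s)$ by choosing the $x$-coordinate appropriately, so $\s$ shatters $Y$. If $Y\in\str(\s_\cap)$, every $f\in\{0,1\}^Y$ extends inside $S_0\cap S_1$ and each such extension lifts to $S(\s)$ in \emph{both} ways at $x$, so $\s$ shatters $Y\cup\{x\}$. The two families on the left are disjoint (one avoids $x$, the other contains it), hence $|\str(\s)|\ge|\str(\s_\cup)|+|\str(\s_\cap)|$, closing the induction.

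To deduce Sauer, note that every $Y\in\str(\s)$ satisfies $|Y|\le\vc(\s)$ by definition, so $|\str(\s)|\le\sum_{i=0}^{\vc(\s)}\binom{n}{i}$, and combining with Pajor gives the theorem. The only delicate step is the transfer inclusion: one must use that each element of $S_0\cap S_1$ admits two preimages in $S(\s)$ differing precisely at $x$, which is exactly what forces the \emph{augmented} set $Y\cup\{x\}$ (rather than just $Y$) to be shattered and supplies the extra $|\str(\s_\cap)|$ needed to match the counting identity; everything else is routine bookkeeping.
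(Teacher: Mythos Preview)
Your proof is correct and follows the paper's overall strategy: establish the Pajor inequality $|\s|\le|\str(\s)|$ by induction on $|\dim(\s)|$ and then bound $|\str(\s)|$ by $\sum_{i\le\vc(\s)}{n\choose i}$. The only difference is the inductive decomposition. The paper (Lemma~\ref{lem:upper inequality}) splits $\s$ into its two \emph{restrictions} $S_0,S_1$ associated with $x$ (Definition~\ref{def:restrictions on x}) and uses $\str(S_0)\oplus_x\str(S_1)\subseteq\str(\s)$ (Lemma~\ref{lem:str and oplus}) together with $|\s|=|S_0|+|S_1|$; you instead pass to the \emph{derivatives} $\s_\cup=S_0\cup S_1$ and $\s_\cap=S_0\cap S_1$ and use the inclusion--exclusion identity $|\s|=|\s_\cup|+|\s_\cap|$. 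The paper itself later remarks (the discussion preceding Theorem~\ref{thm:QQQ}) that the derivative pair enjoys exactly the same two properties needed for the induction, so the two decompositions are formally interchangeable. Your variant makes the disjointness of the two families contributing to $\str(\s)$ immediately visible (one avoids $x$, the other contains it, since $\str(\s_\cap)\subseteq\str(\s_\cup)$), whereas the paper's $\oplus_x$ formalism packages this uniformly and is then reused for the dual inequality and the recursive $SE$ characterizations.
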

A proof for this lemma will be given in the next section.

\subsection{The Sandwich theorem}
The following generalization of Sauer's lemma is the result of an accumulated work by several authors. Different parts of this theorem were proven independently several times. (Pajor \cite{Pajor}, Bollob{\'a}s and Radcliffe \cite{BR95}, Dress \cite{Dress1} and Holzman and Aharoni \cite{ShatNews,Greco98})
\begin{theorem}[Sandwich theorem \cite{Pajor,BR95,Dress1,ShatNews,Greco98}]\label{thm:basic inequality}
For a system $\s$:
$$\lvert \sstr(\s)\rvert \leq \lvert\s\rvert \leq \lvert \str(\s)\rvert$$
\end{theorem}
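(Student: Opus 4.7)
The plan is to prove both halves of the inequality by induction on $n=|\dim(\s)|$, following Pajor's classical splitting argument. The base case $n=0$ is immediate: $\s\in\{\mathcal{K}_0,\mathcal{K}_1\}$, and the three quantities coincide. For the inductive step, fix some $x\in\dim(\s)$ and decompose $S(\s)$ into its two slices $\s_0,\s_1$ (the vertices taking value $0$ and $1$ at $x$, respectively), viewed as systems on $X'=\dim(\s)\setminus\{x\}$; by construction $|\s|=|\s_0|+|\s_1|$.

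To prove $|\s|\le|\str(\s)|$, I would apply the induction hypothesis to $\s_0,\s_1$ and reduce the job to showing $|\str(\s_0)|+|\str(\s_1)|\le|\str(\s)|$. This follows by partitioning $\str(\s)$ according to whether $x\in Y$: a set $Y\subseteq X'$ is shattered by $\s$ iff it is shattered by the ``union'' system on $X'$ with vertex set $S(\s_0)\cup S(\s_1)$, while $Y\cup\{x\}$ is shattered by $\s$ iff $Y$ is shattered by \emph{both} $\s_0$ and $\s_1$. Hence $|\str(\s)|\ge|\str(\s_0)\cup\str(\s_1)|+|\str(\s_0)\cap\str(\s_1)|=|\str(\s_0)|+|\str(\s_1)|$ by inclusion--exclusion. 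For the dual inequality $|\sstr(\s)|\le|\s|$, the same split yields the mirror statements: $Y\subseteq X'$ is strongly shattered by $\s$ iff it is strongly shattered by $\s_0$ \emph{or} by $\s_1$ (the witness $g$ must commit to a single value at $x$), and $Y\cup\{x\}$ is strongly shattered by $\s$ iff $Y$ is strongly shattered by the ``intersection'' system $\s_\cap$ on $X'$ with $S(\s_\cap)=S(\s_0)\cap S(\s_1)$. Since $\sstr(\s_\cap)\subseteq\sstr(\s_0)\cap\sstr(\s_1)$ trivially, inclusion--exclusion gives $|\sstr(\s)|=|\sstr(\s_0)\cup\sstr(\s_1)|+|\sstr(\s_\cap)|\le|\sstr(\s_0)|+|\sstr(\s_1)|\le|\s_0|+|\s_1|=|\s|$ by induction.

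The main obstacle is getting the quantifier bookkeeping right when translating shattering and strong-shattering into clean statements about $\s_0$, $\s_1$, $\s_\cap$; in particular, one must resist conflating $\str(\s_0)\cap\str(\s_1)$ with $\str(\s_\cap)$, since in the shattering definition the witness $g$ is allowed to pick its value at $x$ depending on $f$. That asymmetry between ``$\cap$'' and ``$\cup$'' in the two halves of the proof is precisely the quantifier duality between $\str$ and $\sstr$ highlighted earlier in the paper; an alternative route, more in line with the duality theme, would be to prove only one of the two bounds by induction and deduce the other via complementation, using Lemma~\ref{obs:dualStrSstr} in the form $|\str(\s)|+|\sstr(\neg\s)|=2^{|\dim(\s)|}$.
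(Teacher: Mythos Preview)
Your proposal is correct and follows essentially the same route as the paper: induction on $|\dim(\s)|$, splitting at a coordinate $x$ into the two slices $\s_0,\s_1$, and reducing to the inequality $|\str(\s_0)|+|\str(\s_1)|\le|\str(\s)|$ (and its dual), which the paper encodes via the operator $A\oplus_x B$ and Lemmas~\ref{lem:oplus trivial}--\ref{lem:str and oplus} rather than bare inclusion--exclusion. For the lower bound the paper obtains the mirror argument by its ``Duality transformation'' on the text of the upper-bound proof, which amounts to exactly the direct argument you wrote out (and your suggested complementation alternative via Lemma~\ref{obs:dualStrSstr} is likewise how the paper later phrases the same duality).
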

This section discusses and proves this theorem. The following trivial fact will be useful. For any $a',b',a'',b''\in\mathbb{R}$:
	\begin{equation}\label{eq:real numbers fact}
	a'\leq b'\mbox{ and }~a''\leq b''\mbox{ and }a'+b'=a''+b''~~\implies~~ a'=b'\mbox{ and }a''=b'' 
	\end{equation}
\newpage
\begin{lemma}\label{lem:central lemma str}
Let $\s,\s',\s''$ be systems that satisfy the following:
	\begin{enumerate}
	\item{$\lvert\s'\rvert+\lvert\s''\rvert=\lvert\s\rvert$}
	\item{$\lvert \str(\s')\rvert+\lvert \str(\s'')\rvert\leq\lvert \str(\s)\rvert$}
	\item{$\lvert \s'\rvert\leq\lvert \str(\s')\rvert,~\lvert \s''\rvert\leq\lvert \str(\s'')\rvert $}
	\end{enumerate}
Then:
	\begin{enumerate}[label=(\alph*)]
	\item{$\lvert\s\rvert\leq\lvert \str(\s)\rvert$}
	\item{$\lvert\s\rvert=\lvert \str(\s)\rvert$ $\iff$ Assumptions $2$ and $3$ in the premise of the lemma are satisfied with equality}
	\end{enumerate}
\end{lemma}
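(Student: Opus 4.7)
The plan is that this is essentially an arithmetic chase built on the chain of inequalities one obtains by plugging the three hypotheses into each other, together with the trivial fact \eqref{eq:real numbers fact}.

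First I would establish (a) by stringing together the assumptions. By hypothesis~1 we have $|\s| = |\s'| + |\s''|$; by hypothesis~3 applied to each summand we get $|\s'| + |\s''| \leq |\str(\s')| + |\str(\s'')|$; and by hypothesis~2 the right-hand side is at most $|\str(\s)|$. Chaining these yields
\[
|\s| \;=\; |\s'| + |\s''| \;\leq\; |\str(\s')| + |\str(\s'')| \;\leq\; |\str(\s)|,
\]
which is exactly (a).

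For (b), the ``$\Leftarrow$'' direction is immediate: if hypotheses~2 and~3 hold with equality, every inequality in the chain above becomes an equality, so $|\s| = |\str(\s)|$. For the ``$\Rightarrow$'' direction, assume $|\s| = |\str(\s)|$. Then the chain forces both inequalities to be equalities. The outer equality directly gives hypothesis~2 with equality. The inner equality, $|\s'| + |\s''| = |\str(\s')| + |\str(\s'')|$, combined with the two per-summand inequalities $|\s'| \leq |\str(\s')|$ and $|\s''| \leq |\str(\s'')|$ from hypothesis~3, is precisely the setup to invoke the elementary fact \eqref{eq:real numbers fact}, which concludes that each of these two inequalities is in fact an equality; that is hypothesis~3 with equality.

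There is no real obstacle here; the only thing to be careful about is the order in which the two collapses are extracted from $|\s| = |\str(\s)|$, and in particular that hypothesis~3's equality must be obtained via \eqref{eq:real numbers fact} rather than directly, since individually the two inequalities of hypothesis~3 need not saturate until one combines them with the summed equality coming from the chain.
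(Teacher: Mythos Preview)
Your proof is correct and is essentially identical to the paper's own argument: the same chain of inequalities for~(a), the trivial direction of~(b), and then collapsing the chain and invoking fact~\eqref{eq:real numbers fact} for the nontrivial direction. The only cosmetic difference is that the paper writes the chain starting from $|\str(\s)|$ and descending, whereas you start from $|\s|$ and ascend.
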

%\begin{itemize}
Remark: Assumption 3 is always true as will be proved later
%\end{itemize}
\begin{proof}
Regarding Conclusion $(a)$:
	\begin{align*}
	\lvert \str(\s) \rvert	&\geq\lvert \str(\s')\rvert + \lvert \str(\s'')\rvert	&\mbox{by Assumption $2$}\\
	&\geq\lvert\s'\rvert + \lvert\s''\rvert																		&\mbox{by Assumption $3$}\\
	&=\lvert\s\rvert																													&\mbox{by Assumption $1$}\\
	\,.
	\end{align*}
Considering Conclusion $(b)$: The right to left direction is trivial. Assume $\lvert\s\rvert=\lvert \str(\s)\rvert$. Therefore, the above chain of inequalities holds  when the symbol `$\geq$' is replaced with `$=$'. This implies equality in Assumption $2$. Fact (\ref{eq:real numbers fact}) implies equality in Assumption $3$.
\end{proof}
\begin{definition}\label{def:restrictions on x}
Let $\s$ be a system, let $x\in \dim(\s)$. The \deftext{restrictions of $\s$ associated with $x$} are the following two systems:
	\begin{itemize} 
	\item{$\sys{\{\restr{f}{\dim(\s)-\{x\}}~:~f\in\s,~f(x)=0\}}{\{0,1\}^{\dim(\s)-\{x\}}}$}
	\item{$\sys{\{\restr{f}{\dim(\s)-\{x\}}~:~f\in\s,~f(x)=1\}}{\{0,1\}^{\dim(\s)-\{x\}}}$}
	\end{itemize}
\end{definition}
Let $\s$ be a system, let $x\in \dim(\s)$ and let $\{\s',\s''\}$ be the pair of restrictions of $\s$ associated with $x$. We will prove that $\lvert\s\rvert\leq\lvert \str(\s)\rvert$ by showing that these particular $\s,\s',\s''$ satisfy the premise of Lemma \ref{lem:central lemma str}.

\begin{definition}
Let $A,B$ be sets and $x$ be an object. Let $\oplus_{x}$ denote the following operator:
	$$
	A\oplus_{x} B\defeq A\cup B\cup\big\{ c\cup\{x\}~:~c\in A\cap B\big\}
	$$
\end{definition}
The following two lemmas are straightforward.
\begin{lemma}\label{lem:oplus trivial}
Let $A,B$, be sets and let $x$ be an object such that $x\notin T$ for every $T\in A\cup B$. Then
	$$
	\lvert A\oplus_{x}B\rvert=\lvert A\rvert+\lvert B\rvert
	$$
\end{lemma}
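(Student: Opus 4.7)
The plan is to decompose $A\oplus_x B$ into two disjoint pieces and then apply standard inclusion-exclusion. Write $A\oplus_x B = (A\cup B)\cup N$, where $N\defeq\{c\cup\{x\}~:~c\in A\cap B\}$. The hypothesis that $x\notin T$ for every $T\in A\cup B$ immediately gives that every element of $N$ contains $x$ while no element of $A\cup B$ does; hence $(A\cup B)\cap N=\emptyset$, so $\lvert A\oplus_x B\rvert=\lvert A\cup B\rvert+\lvert N\rvert$.

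Next, I would observe that the map $c\mapsto c\cup\{x\}$ from $A\cap B$ onto $N$ is a bijection. Surjectivity is by definition of $N$, and injectivity follows from the hypothesis on $x$: for any $c\in A\cap B$, since $x\notin c$, the original set is recovered by $c=(c\cup\{x\})\setminus\{x\}$. Therefore $\lvert N\rvert=\lvert A\cap B\rvert$.

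Finally, combining this with the standard inclusion-exclusion identity $\lvert A\cup B\rvert=\lvert A\rvert+\lvert B\rvert-\lvert A\cap B\rvert$ produces a cancellation of the $\lvert A\cap B\rvert$ terms and yields $\lvert A\oplus_x B\rvert=\lvert A\rvert+\lvert B\rvert$.

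There is no real obstacle here; the only point that warrants any attention is confirming that the hypothesis $x\notin T$ for $T\in A\cup B$ is used in exactly the right two places, namely to ensure disjointness of $A\cup B$ and $N$, and to ensure injectivity of $c\mapsto c\cup\{x\}$. Without this assumption either step could fail and the equality would break.
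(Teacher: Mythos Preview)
Your proof is correct and is exactly the kind of direct counting argument the paper has in mind; the paper itself does not spell out a proof but simply declares the lemma ``straightforward.'' Your decomposition into $A\cup B$ and $\{c\cup\{x\}:c\in A\cap B\}$, together with inclusion--exclusion, is the natural way to make that remark precise.
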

\begin{lemma}\label{lem:str and oplus}
Let $\s$ be a system, let $x\in \dim(\s)$ and let $\{\s',\s''\}$ be the pair of restrictions of $\s$ associated with $x$. Then:
	$$
	\str(\s')\oplus_{x}\str(\s'')\subseteq \str(\s)
	$$
\end{lemma}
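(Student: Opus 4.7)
The plan is to unpack the definition of $\oplus_x$ and verify that each of its three constituent pieces lands inside $\str(\s)$. Writing $D = \dim(\s)$ and noting $\dim(\s') = \dim(\s'') = D - \{x\}$, the set $\str(\s')\oplus_x\str(\s'')$ consists of (i) the sets in $\str(\s')$, (ii) the sets in $\str(\s'')$, and (iii) sets of the form $Y\cup\{x\}$ for $Y\in\str(\s')\cap\str(\s'')$. So the proof amounts to three shattering verifications, each a direct application of Definition~\ref{def:set-sys}--style unfolding.

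First, I would handle (i) and (ii) simultaneously by a symmetry argument. Suppose $Y\in\str(\s')$ (the case of $\s''$ is identical after swapping $0$ and $1$ at the coordinate $x$). Given any $f\in\{0,1\}^Y$, I would pick $g\in\{0,1\}^{(D-\{x\})-Y}$ witnessing that $\s'$ shatters $Y$, so that $g\mymerge f\in S(\s')$. By the definition of the restriction $\s'$, there exists $h\in S(\s)$ with $h(x)=0$ and $h\restriction_{D-\{x\}} = g\mymerge f$. Setting $g' = g\mymerge (x\mapsto 0) \in \{0,1\}^{D-Y}$ then gives $g'\mymerge f = h\in S(\s)$, so $Y\in\str(\s)$.

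The main content is item (iii). Fix $Y\in\str(\s')\cap\str(\s'')$; I need to show $\s$ shatters $Y\cup\{x\}$. For an arbitrary $f\in\{0,1\}^{Y\cup\{x\}}$, split according to $f(x)$: if $f(x)=0$, apply the shattering of $Y$ by $\s'$ to $f\restriction_Y$ to produce $g\in\{0,1\}^{(D-\{x\})-Y}$ with $g\mymerge f\restriction_Y\in S(\s')$; lifting as in step one produces a witness in $S(\s)$ that agrees with $f$ on $Y\cup\{x\}$. The case $f(x)=1$ is handled identically using $\s''$. Together these supply, for every $f\in\{0,1\}^{Y\cup\{x\}}$, an extension in $\{0,1\}^{D-(Y\cup\{x\})}$ whose merge with $f$ lies in $S(\s)$, so $Y\cup\{x\}\in\str(\s)$.

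The argument is essentially routine bookkeeping once the quantifiers in the shattering definition are unpacked, so I don't anticipate a real obstacle. The only place where care is needed is in item (iii), where one must use both $\str(\s')$ and $\str(\s'')$ to cover the two possible values of $f(x)$ — this is precisely why the $\oplus_x$ operator inserts $x$ only on sets in the intersection $\str(\s')\cap\str(\s'')$. It is also worth noting explicitly that sets of the three types are automatically subsets of $D$ (the third type uses that $x\notin Y\subseteq D-\{x\}$), which makes the inclusion into $\str(\s)\subseteq\powset(D)$ well-defined.
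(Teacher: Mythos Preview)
Your proof is correct and is exactly the kind of direct unfolding the paper has in mind: the paper does not actually write out a proof of this lemma, merely declaring it (together with Lemma~\ref{lem:oplus trivial}) to be ``straightforward.'' Your case split according to the three pieces of $\oplus_x$ and the branching on $f(x)$ in case~(iii) is the natural way to make this explicit.
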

\begin{lemma}\label{lem:upper inequality}
Let $\s$ be a system. Then: $$|\s| \leq |\str(\s)|.$$
\end{lemma}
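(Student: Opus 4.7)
The plan is to prove the inequality by induction on $n = |\dim(\s)|$, using Lemma \ref{lem:central lemma str} as the inductive engine and the restriction construction from Definition \ref{def:restrictions on x} as the splitting device.

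For the base case $n = 0$, the system $\s$ is either $\mathcal{K}_0$ or $\mathcal{K}_1$. If $\s = \mathcal{K}_0$ then $|\s| = 0 \leq |\str(\s)|$ trivially, while if $\s = \mathcal{K}_1$ then $|\s| = 1$ and $\emptyset \in \str(\s)$ since the existential quantifier over $\{0,1\}^\emptyset$ is witnessed by the empty function; hence $|\str(\s)| \geq 1$.

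For the inductive step, assume the inequality holds for every system of dimension $n-1$ and let $\s$ have $|\dim(\s)| = n \geq 1$. Pick any $x \in \dim(\s)$ and let $\{\s', \s''\}$ be the pair of restrictions of $\s$ associated with $x$. I would then verify the three hypotheses of Lemma \ref{lem:central lemma str}: Assumption 1, $|\s'| + |\s''| = |\s|$, follows because every $f \in S(\s)$ belongs to exactly one of the two restrictions according to $f(x)$. Assumption 2, $|\str(\s')| + |\str(\s'')| \leq |\str(\s)|$, follows by combining Lemma \ref{lem:str and oplus} with Lemma \ref{lem:oplus trivial}, since $x$ does not belong to any element of $\str(\s') \cup \str(\s'')$ (these sets live in $\powset(\dim(\s) - \{x\})$). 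Assumption 3, $|\s'| \leq |\str(\s')|$ and $|\s''| \leq |\str(\s'')|$, is exactly the induction hypothesis applied to $\s'$ and $\s''$, both of dimension $n-1$. Conclusion (a) of Lemma \ref{lem:central lemma str} then yields $|\s| \leq |\str(\s)|$.

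There is no real obstacle; the work has been front-loaded into the preceding lemmas. The only point that needs a moment of care is the disjointness check required to invoke Lemma \ref{lem:oplus trivial} — namely that $x$ does not appear in any shattered set of $\s'$ or $\s''$ — which is immediate from the fact that these restrictions have $\dim(\s) - \{x\}$ as their coordinate set.
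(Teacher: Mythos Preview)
Your proof is correct and follows essentially the same approach as the paper: induction on $|\dim(\s)|$, splitting via the restrictions associated with some $x\in\dim(\s)$, and verifying the three hypotheses of Lemma~\ref{lem:central lemma str} using Lemmas~\ref{lem:oplus trivial} and~\ref{lem:str and oplus} together with the induction hypothesis. Your added remark about why $x$ lies in no element of $\str(\s')\cup\str(\s'')$ is exactly the disjointness needed for Lemma~\ref{lem:oplus trivial}, and the paper leaves this implicit.
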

\begin{proof}
We prove the claim by induction on $\dim(\s)$. The case of $\dim(\s)=\emptyset$ is trivial. Otherwise, pick $x\in \dim(\s)$ and let $\{\s',\s''\}$ be the pair of restrictions of $\s$ associated with $x$. It is enough to show that $\s',\s''$ satisfy the three Assumptions of Lemma \ref{lem:central lemma str}. Assumption $1$ is immediate. Assumption $2$ follows from Lemmas \ref{lem:oplus trivial} and \ref{lem:str and oplus} Assumption $3$ follows by the induction hypothesis.
\end{proof}

To see that Lemma \ref{lem:upper inequality} generalize Sauer lemma, note that by the definition of $\vc$ dimension:
$$\str(\s)\subseteq\{Y\subseteq \dim(\s)~:~\lvert Y\rvert\leq \vc(\s) \}.$$
Hence, Sauer lemma (Theorem \ref{thm:Sauer lemma}) is an easy conclusion of Lemma \ref{lem:upper inequality}.

To establish the Sandwich theorem it remains to show that $\lvert \sstr(\s) \rvert\leq\lvert \s \rvert$ and this is accomplished via applying the Duality tranformation on the proofs of Lemmas \ref{lem:central lemma str} and \ref{lem:str and oplus} and \ref{lem:upper inequality}. It is easy to verify that the duals of our proofs for these lemmas are valid\footnote{Note that the Dual transformation of a proof transforms only the text explicitly written. It does not translate implicit arguments}. The dual of Lemma \ref{lem:upper inequality} gives the desired inequality, namely, 
$$\lvert \sstr(\s) \rvert\leq\lvert \s \rvert.$$
This finishes the proof of Theorem \ref{thm:basic inequality}.

Our proof for the Sandwich Theorem demonstrates the usefulness of the Duality Transformation. However, it is important to write some lines about the fragile nature of this tranformation. This transformation works only for some lemmas/theorems. It is not hard to find (true) claims whose dual claims are not true. Moreover, as we will see in the next section (Lemma \ref{lem:upper implies SE}), there are some cases in which the dual of a lemma is true but the dual of the presented proof is not valid. Thus, when using this transformation, one has to take special care and to check the validity of the arguments being transformed.

\subsection{SE systems and the Sandwich theorem}\label{sec:SE systems and Sandwich theorem}
The inequalities in the sandwich theorem suggest two more extremal properties of systems, namely:
\begin{itemize}
\item{Systems $\s$ for which $\lvert \sstr(\s)\rvert=\lvert\s\rvert$}
\item{Systems $\s$ for which $\lvert\s)\rvert=\lvert \str(\s)\rvert$}
\end{itemize}

The following result concerning the equivalence of these extremal properties was proven independently in \cite{Dress2}, and in \cite{BR95}:
\begin{theorem}\label{thm:SE char}
Let $\s$ be a system. The following statements are equivalent:
	\begin{enumerate}
	\item{$\sstr(\s)=\str(\s)$}
	\item{$\lvert \sstr(\s)\rvert=\lvert \s\rvert$}
	\item{$\lvert \s\rvert=\lvert \str(\s)\rvert$}
	\end{enumerate}
\end{theorem}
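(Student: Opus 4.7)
The plan is to prove $(1) \Rightarrow (2), (3)$ trivially from the Sandwich theorem, $(3) \Rightarrow (1)$ by induction on $|\dim(\s)|$ (this is the main work), and $(2) \Rightarrow (1)$ by applying the previous implication to $\neg\s$. The first direction is immediate: $\sstr(\s) \subseteq \str(\s)$ together with $|\sstr(\s)| \leq |\s| \leq |\str(\s)|$ makes the set equality $\sstr(\s) = \str(\s)$ collapse all three cardinalities.

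For the inductive step of $(3) \Rightarrow (1)$, fix $x \in \dim(\s)$, let $\s', \s''$ be the restrictions of $\s$ at $x$, and introduce two auxiliary systems on $\{0,1\}^{\dim(\s)-\{x\}}$: $\sysnot{F}$ with element set $S(\s') \cap S(\s'')$, and $\sysnot{U}$ with element set $S(\s') \cup S(\s'')$. Inclusion-exclusion gives $|\s| = |\s'|+|\s''| = |\sysnot{F}|+|\sysnot{U}|$, and a case analysis on whether $x \in Y$ yields the disjoint decompositions
\begin{align*}
\str(\s) &= \str(\sysnot{U}) \sqcup \{Y' \cup \{x\} : Y' \in \str(\s') \cap \str(\s'')\}, \\
\sstr(\s) &= \bigl(\sstr(\s') \cup \sstr(\s'')\bigr) \sqcup \{Y' \cup \{x\} : Y' \in \sstr(\sysnot{F})\}.
\end{align*}
The crucial point of the second decomposition is that a $(Y' \cup \{x\})$-cube sits in $\s$ iff its waist is a $Y'$-cube common to $\s'$ and $\s''$, i.e., a $Y'$-cube in $\sysnot{F}$. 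Together with the obvious inclusion $\str(\sysnot{F}) \subseteq \str(\s') \cap \str(\s'')$, the Sandwich theorem applied to $\sysnot{F}$ and $\sysnot{U}$ gives
$$|\s| = |\sysnot{F}|+|\sysnot{U}| \leq |\str(\sysnot{F})|+|\str(\sysnot{U})| \leq |\str(\s') \cap \str(\s'')|+|\str(\sysnot{U})| = |\str(\s)|,$$
so the hypothesis $|\s|=|\str(\s)|$ forces every inequality to be an equality; in particular, $\sysnot{F}$ and $\sysnot{U}$ both satisfy $(3)$ and $\str(\sysnot{F}) = \str(\s')\cap\str(\s'')$. By Lemma \ref{lem:central lemma str}(b), $(3)$ also holds for $\s'$ and $\s''$.

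Since $\s', \s'', \sysnot{F}, \sysnot{U}$ all have strictly smaller dimension than $\s$, the induction hypothesis gives $(1)$ for each. Substituting $\sstr(\s') = \str(\s')$, $\sstr(\s'') = \str(\s'')$, and $\sstr(\sysnot{F}) = \str(\sysnot{F}) = \str(\s')\cap\str(\s'') = \sstr(\s')\cap\sstr(\s'')$ into the $\sstr$-decomposition yields $\sstr(\s) = \sstr(\s') \oplus_x \sstr(\s'') = \str(\s') \oplus_x \str(\s'') = \str(\s)$, which is $(1)$. Finally, $(2) \Rightarrow (1)$ follows because $(2)$ for $\s$ is equivalent to $(3)$ for $\neg\s$ (by Lemma \ref{obs:dualStrSstr}, $|\sstr(\s)| = 2^{|\dim(\s)|}-|\str(\neg\s)|$ and $|\s|+|\neg\s|=2^{|\dim(\s)|}$), and $(1)$ for $\neg\s$ is equivalent to $(1)$ for $\s$ by Theorem \ref{thm:SE=Lopsided}. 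The main obstacle is identifying the auxiliary system $\sysnot{F}$: the ``diagonal'' case---a single waist witnessing strong shattering of $Y'$ in both $\s'$ and $\s''$ at once---is exactly strong shattering in $\sysnot{F}$, and without this bookkeeping one cannot close the induction from $\sstr(\s')$ and $\sstr(\s'')$ alone.
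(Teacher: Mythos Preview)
Your proof is correct and follows the same global architecture as the paper: $(1)\Rightarrow(2),(3)$ from the Sandwich theorem, $(3)\Rightarrow(1)$ by induction on $|\dim(\s)|$, and $(2)\Rightarrow(1)$ by passing to $\neg\s$ via Lemma~\ref{obs:dualStrSstr} and Theorem~\ref{thm:SE=Lopsided}. The difference lies in how the inductive step for $(3)\Rightarrow(1)$ is organized.

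The paper's argument (Lemma~\ref{lem:upper implies SE}) is element-by-element: given $B\in\str(\s)$, it chooses $x\notin B$ (handling $B=\dim(\s)$ separately), so that only the ``$x\notin Y$'' case of your decomposition is ever needed. This means the paper gets away with just the two restrictions $\s',\s''$ and never introduces your auxiliary systems $\sysnot{F},\sysnot{U}$. Your approach fixes $x$ once and for all and then must treat both cases $x\in Y$ and $x\notin Y$, which is exactly why you need $\sysnot{F}$ (for the $\sstr$ side when $x\in Y$) and $\sysnot{U}$ (for the $\str$ side when $x\notin Y$). These are precisely the \emph{derivatives} $\inter{\{x\}}(\s)$ and $\union{\{x\}}(\s)$ that the paper only introduces later in Section~4; your argument thus anticipates and partially reproves Theorem~\ref{thm:QQQ}. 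What you gain is an explicit structural identity $\sstr(\s)=\sstr(\s')\oplus_x\sstr(\s'')$ under the hypothesis, rather than a membership chase; what the paper gains is economy (fewer auxiliary objects, no need to verify the $\sstr$-decomposition in the $x\in Y$ case).

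One small point: your final equality $\str(\s')\oplus_x\str(\s'')=\str(\s)$ is not literally your $\str$-decomposition (which has $\str(\sysnot{U})$, not $\str(\s')\cup\str(\s'')$, on the $x\notin Y$ part). It does follow, either from Lemma~\ref{lem:central lemma str}(b) together with Lemmas~\ref{lem:oplus trivial} and~\ref{lem:str and oplus} (this is the paper's route to Equation~(\ref{eq: oplus of restrictions is str})), or from your own counting by observing that $|\str(\s')\cup\str(\s'')|=|\s'|+|\s''|-|\sysnot{F}|=|\sysnot{U}|=|\str(\sysnot{U})|$ once all the equalities are in place. You might make that step explicit.
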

This section discusses and proves Theorem \ref{thm:SE char}.
\begin{lemma}\label{lem:upper implies SE}
For every system $\s$: $\lvert \str(\s)\rvert=\lvert \s\rvert\implies \sstr(\s)=\str(\s)$
\end{lemma}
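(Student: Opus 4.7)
The plan is to prove this by induction on $\lvert\dim(\s)\rvert$, mirroring the skeleton of Lemma \ref{lem:upper inequality} and exploiting the equality clause of Lemma \ref{lem:central lemma str}(b). The base case $\dim(\s)=\emptyset$ is immediate: then both $\str(\s)$ and $\sstr(\s)$ lie in $\{\emptyset\}$ and the two possible systems on an empty ground set trivially satisfy $\str(\s)=\sstr(\s)$.

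For the inductive step, since $\sstr(\s)\subseteq \str(\s)$ is automatic, I will only verify $\str(\s)\subseteq \sstr(\s)$. Fix $Y\in \str(\s)$. If $Y=\dim(\s)$, then unwinding the definition of shatters (with $\dim(\s)-Y=\emptyset$) forces $\s=\{0,1\}^{\dim(\s)}$, and then the empty base trivially witnesses $Y\in\sstr(\s)$. Otherwise pick any $x\in \dim(\s)-Y$ and let $\{\s',\s''\}$ be the pair of restrictions of $\s$ associated with $x$. The three hypotheses of Lemma \ref{lem:central lemma str} hold for $(\s,\s',\s'')$ (this is exactly the triple used in the proof of Lemma \ref{lem:upper inequality}), and since $\lvert\s\rvert=\lvert\str(\s)\rvert$, conclusion (b) forces equality in Assumptions $2$ and $3$: $\lvert\s'\rvert=\lvert\str(\s')\rvert$, $\lvert\s''\rvert=\lvert\str(\s'')\rvert$, and $\lvert\str(\s)\rvert=\lvert\str(\s')\rvert+\lvert\str(\s'')\rvert$. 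Combining the last equality with Lemmas \ref{lem:oplus trivial} and \ref{lem:str and oplus} upgrades the inclusion $\str(\s')\oplus_x \str(\s'')\subseteq \str(\s)$ to an identity.

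Since $x\notin Y$, the definition of $\oplus_x$ places $Y$ in $\str(\s')\cup \str(\s'')$; without loss of generality say $Y\in \str(\s')$. The inductive hypothesis applies to $\s'$ (as $\lvert\dim(\s')\rvert<\lvert\dim(\s)\rvert$ and $\lvert\s'\rvert=\lvert\str(\s')\rvert$), yielding $Y\in \sstr(\s')$. Picking a strong-shattering witness $g\in\{0,1\}^{\dim(\s')-Y}$ for $\s'$, I will extend it to $\tilde g\in\{0,1\}^{\dim(\s)-Y}$ by declaring $\tilde g(x)=0$; for every $f\in\{0,1\}^Y$ the vertex $\tilde g\mymerge f$ agrees with $g\mymerge f\in \s'$ on $\dim(\s')$ and has coordinate $0$ at $x$, hence lies in $\s$. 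So $Y\in \sstr(\s)$, completing the induction.

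The only subtle point, and the main obstacle if one is careless, is the insistence that $x$ be chosen \emph{outside} of $Y$. Had $x$ been in $Y$ we would have landed in the third piece of $\oplus_x$, decomposing $Y=Y'\cup\{x\}$ with $Y'\in \str(\s')\cap \str(\s'')$; and the strong-shattering witnesses produced inductively for $\s'$ and for $\s''$ are in general supported on different bases, so they need not combine into a common witness for $\s$. Choosing $x\notin Y$ side-steps this obstacle entirely, and this is precisely the asymmetry anticipated in the preceding remark on the fragility of the duality transformation: the naive dual of this proof (interchanging $\str$ and $\sstr$) is not valid.
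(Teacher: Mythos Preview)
Your proof is correct and follows the same route as the paper's: induction on $\lvert\dim(\s)\rvert$, choosing $x\notin Y$, invoking Lemma~\ref{lem:central lemma str}(b) to upgrade $\str(\s')\oplus_x\str(\s'')\subseteq\str(\s)$ to an equality, and then lifting a strong-shattering witness from one of the restrictions back to $\s$. One cosmetic quibble: your extension $\tilde g(x)=0$ presumes that $\s'$ is the restriction on the $0$-side of $x$, so the ``without loss of generality'' should absorb that choice as well (the paper simply writes $B\in\sstr(\s')\cup\sstr(\s'')\Rightarrow B\in\sstr(\s)$ without unpacking this step).
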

\begin{proof}
The lemma is proven by induction on $\lvert \dim(\s)\rvert$. Let $B\in \str(\s)$. It is enough to show that $B\in \sstr(\s)$. The case of $B=\dim(\s)$ is easy. Otherwise, pick $x\notin B$ and let $\s',\s''$ be the two restrictions of $\s$ associated with $x$. By Lemma \ref{lem:central lemma str}, $\s',\s''$ satisfy the premise of the current lemma. By the induction hypothesis:
\begin{equation}\label{eq:SE one side}
\sstr(\s')=\str(\s')
\end{equation}
\begin{equation}\label{eq:SE other side}
\sstr(\s'')=\str(\s'')
\end{equation}
Moreover, Lemma \ref{lem:central lemma str} also implies that $\lvert \str(\s')\rvert+\vert \str(\s'')\rvert=\lvert \str(\s)\rvert$. This, combined with Lemmas \ref{lem:oplus trivial} and \ref{lem:str and oplus}, implies that
\begin{equation}\label{eq: oplus of restrictions is str}
\str(\s')\oplus_{x}\str(\s'')= \str(\s)
\end{equation}
Therefore:
	\begin{align*}
  B\in \str(\s) &\implies B\in \str(\s')\oplus_x \str(\s'')   &\mbox{by Equation \ref{eq: oplus of restrictions is str}}\\
							 &\implies B\in \str(\s')\cup \str(\s'')			 &\mbox{since }x\notin B\\
							 &\implies B\in \sstr(\s')\cup \sstr(\s'')     &\mbox{by Equations (\ref{eq:SE one side}) and (\ref{eq:SE other side}) }\\
							 &\implies B\in \sstr(\s) 
							 \,.
	\end{align*}
\end{proof}
The dual of Lemma \ref{lem:upper implies SE} is true. However, we dont know a proof for Lemma \ref{lem:upper implies SE} whose dual is valid\footnote{The dual of our proof of Lemma \ref{lem:upper implies SE} is not valid as observed, for example, by the dual of the second line in the proof: ``Let $B\in \sstr(\s)$. It is enough to show that $B\in \str(\s)$... ''. This argument is obviously wrong.}. Therefore, we present a different proof for the dual lemma.

Lemma \ref{obs:dualStrSstr} implies that for all $\s$: $\lvert \str(\s)\rvert+\lvert \sstr(\neg\s)\rvert=2^{\lvert \dim(\s)\rvert}$. Clearly, $\lvert\s\rvert+\lvert\neg\s\rvert=2^{\lvert \dim(\s)\rvert}$. This implies:
\begin{lemma}\label{lem:uep(s) is lep(sc)}
For every system $\s$, the following statements are equivalent:
	\begin{enumerate}
	\item{$\lvert\s\rvert=\lvert \str(\s)\rvert$}
	\item{$\lvert \sstr(\neg\s)\rvert=\lvert\neg\s\rvert$}
	\end{enumerate}
\end{lemma}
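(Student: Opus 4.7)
The proof plan is essentially already sketched in the two sentences immediately preceding the statement, so the task is to assemble these ingredients into a clean equivalence argument. The two key numerical identities I will use are: first, that $\lvert \str(\s)\rvert + \lvert \sstr(\neg\s)\rvert = 2^{\lvert \dim(\s)\rvert}$, and second, that $\lvert \s\rvert + \lvert \neg\s\rvert = 2^{\lvert \dim(\s)\rvert}$.

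The second identity is immediate from the definition of $\neg\s$: by Definition \ref{def:set-sys}, $S(\neg\s) = C(\s) - S(\s)$ and $C(\s) = \{0,1\}^{\dim(\s)}$, so $\lvert \s\rvert + \lvert \neg\s\rvert = \lvert C(\s)\rvert = 2^{\lvert \dim(\s)\rvert}$. The first identity is where the content lies, and I would derive it directly from Lemma \ref{obs:dualStrSstr}. That lemma says that for every partition $\{X', X''\}$ of $\dim(\s)$, exactly one of ``$\s$ shatters $X'$'' or ``$\neg\s$ strongly shatters $X''$'' holds. Rephrasing in terms of the subset $X' \subseteq \dim(\s)$ (with $X'' := \dim(\s) - X'$), this says exactly that the complementation map $X' \mapsto \dim(\s) - X'$ is a bijection from $\powset(\dim(\s)) \setminus \str(\s)$ onto $\sstr(\neg\s)$. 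Counting gives $2^{\lvert \dim(\s)\rvert} - \lvert \str(\s)\rvert = \lvert \sstr(\neg\s)\rvert$, which is the first identity.

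Having both identities, the equivalence is a one-line subtraction. Rewriting them as $\lvert \str(\s)\rvert - \lvert \s\rvert = 2^{\lvert\dim(\s)\rvert} - \lvert \s\rvert - \lvert \sstr(\neg\s)\rvert$ and $\lvert \neg\s\rvert - \lvert \sstr(\neg\s)\rvert = 2^{\lvert\dim(\s)\rvert} - \lvert \s\rvert - \lvert \sstr(\neg\s)\rvert$, we see
\[
\lvert \str(\s)\rvert - \lvert \s\rvert \;=\; \lvert \neg\s\rvert - \lvert \sstr(\neg\s)\rvert.
\]
By Theorem \ref{thm:basic inequality} both sides are nonnegative, so one side vanishes if and only if the other does, giving the desired equivalence of statements $(1)$ and $(2)$.

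There is no real obstacle here: the only thing one must be careful about is the correct translation of Lemma \ref{obs:dualStrSstr} from a statement about partitions into a bijective counting statement about subsets of $\dim(\s)$, but this is routine since a partition of $\dim(\s)$ is the same data as a choice of subset $X' \subseteq \dim(\s)$ (with $X''$ determined as $\dim(\s) - X'$). Once that identification is made, everything reduces to arithmetic and an appeal to the Sandwich theorem (Theorem \ref{thm:basic inequality}) to guarantee the signs of the differences.
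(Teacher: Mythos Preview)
Your proof is correct and follows exactly the approach sketched in the paper immediately before the lemma: derive the two identities $\lvert \str(\s)\rvert + \lvert \sstr(\neg\s)\rvert = 2^{\lvert \dim(\s)\rvert}$ (from Lemma~\ref{obs:dualStrSstr}) and $\lvert \s\rvert + \lvert \neg\s\rvert = 2^{\lvert \dim(\s)\rvert}$, then subtract. Note that your final appeal to the Sandwich theorem is unnecessary: from $\lvert \str(\s)\rvert - \lvert \s\rvert = \lvert \neg\s\rvert - \lvert \sstr(\neg\s)\rvert$ alone, one side is zero if and only if the other is, regardless of signs.
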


\begin{lemma}[Dual of Lemma \ref{lem:upper implies SE}]\label{lem:lower implies SE}
For every system $\s$: 
$$\lvert \sstr(\s)\rvert=\lvert \s\rvert\implies \sstr(\s)=\str(\s).$$
\end{lemma}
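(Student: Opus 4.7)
The footnote following Lemma \ref{lem:upper implies SE} warns that the mechanical dualization of that proof breaks down (picking an arbitrary $B\in\sstr(\s)$ yields $B\in\str(\s)$ for free), so I would not try to redo the induction in dual form. Instead, the plan is to reduce the desired claim about $\sstr$ back to the already-proved claim about $\str$ by passing to the complementary system $\neg\s$.

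Concretely, assume $\lvert \sstr(\s)\rvert=\lvert\s\rvert$. The first step is to invoke Lemma \ref{lem:uep(s) is lep(sc)} with $\neg\s$ in the role of $\s$ (using $\neg\neg\s=\s$): the equivalence stated there converts the hypothesis $\lvert \sstr(\s)\rvert=\lvert\s\rvert$ into the parallel equality $\lvert \str(\neg\s)\rvert=\lvert\neg\s\rvert$. Lemma \ref{lem:upper implies SE} now applies to $\neg\s$ and yields $\sstr(\neg\s)=\str(\neg\s)$, that is, $SE(\neg\s)$. Finally, Theorem \ref{thm:SE=Lopsided} supplies the bridge $SE(\neg\s)\iff SE(\s)$, from which $\sstr(\s)=\str(\s)$ follows at once.

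The main subtlety to watch for is circularity: I would double-check that Theorem \ref{thm:SE=Lopsided} is genuinely available at this point, and it is, because its derivation proceeds through Lemma \ref{lem:lopsided iff SE}, its dual Lemma \ref{lem:Dual lopsided iff SE}, and the trivial Theorem \ref{thm:lopsided and complementing} — none of which depends on Theorem \ref{thm:SE char}. If one prefers to sidestep even this detour, an equally short alternative is to apply Lemma \ref{obs:dualStrSstr} in both directions: it gives $X\in\str(\s)\iff\dim(\s)-X\notin\sstr(\neg\s)$ and, exchanging $\s$ with $\neg\s$, $X\in\sstr(\s)\iff\dim(\s)-X\notin\str(\neg\s)$. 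Once $\sstr(\neg\s)=\str(\neg\s)$ is known, these right-hand sides coincide for every $X\subseteq\dim(\s)$, so $\sstr(\s)=\str(\s)$ follows immediately. Either route keeps the entire argument to a few lines and isolates the nontrivial content in the already-established upper half of the Sandwich Theorem.
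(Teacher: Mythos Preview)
Your proposal is correct and matches the paper's own proof essentially line for line: the paper also applies Lemma~\ref{lem:uep(s) is lep(sc)} to pass from $\lvert\sstr(\s)\rvert=\lvert\s\rvert$ to $\lvert\str(\neg\s)\rvert=\lvert\neg\s\rvert$, then Lemma~\ref{lem:upper implies SE} to obtain $SE(\neg\s)$, and finally Theorem~\ref{thm:SE=Lopsided} to conclude $SE(\s)$. Your circularity check and the alternative finish via Lemma~\ref{obs:dualStrSstr} are both sound additions, though the paper does not spell them out.
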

\begin{proof}
	\begin{align*}
	\lvert \sstr(\s)\rvert=\lvert\s\rvert &\implies \lvert \str(\neg\s)\rvert=\lvert\neg\s\rvert &\mbox{by Lemma \ref{lem:uep(s) is lep(sc)}}\\
																		   &\implies \sstr(\neg\s)=\str(\neg\s) &\mbox{by Lemma \ref{lem:upper implies SE}}\\
																			 &\implies \sstr(\s)=\str(\s) &\mbox{by Theorem \ref{thm:SE=Lopsided}}\\
																			 \,.
	\end{align*}
\end{proof}
The Sandwich Theorem and Lemmas \ref{lem:upper implies SE}, \ref{lem:lower implies SE} finish the proof of Theorem \ref{thm:SE char}.

By Theorem \ref{thm:SE char} it follows that any of the 3 properties mentioned there, may serve as a definition of {\it Shattering Extremality}.\\
The most common definition in the literature is $\lvert\s\rvert=\lvert \str(\s)\rvert$. ( See \cite{BR95, ShatNews, Greco98})\\
We prefer $\sstr(\s)=\str(\s)$ as this definition is invariant under duality. %and uses only equality between sets\footnote{thus, this definition of $SE$ can be naturally extended to infinite systems}.\\

\subsection{A recursive characterization of $SE$}
\begin{theorem}\label{thm:RRR}
Let $\s$ be a system, let $x\in \dim(\s)$ and let $\{\s',\s''\}$ be the pair of restrictions of $\s$ associated with $x$. Then the following statements are equivalent:
	\begin{enumerate}
	\item{$SE(\s)$.}
	\item{$SE(\s')$ and $SE(\s'')$ and $\str(\s')\oplus_x \str(\s'')=\str(\s)$.}
	\item{$SE(\s')$ and $SE(\s'')$ and $\sstr(\s')\oplus_x \sstr(\s'')=\sstr(\s)$.}
	\end{enumerate}
\end{theorem}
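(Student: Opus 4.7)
The plan is to first establish $(1)\Leftrightarrow(2)$ by a direct bookkeeping argument, and then to obtain $(1)\Leftrightarrow(3)$ by applying the Duality Transformation to the same proof. The workhorse for $(1)\Leftrightarrow(2)$ is the chain
\begin{equation*}
\lvert\s\rvert \;=\; \lvert\s'\rvert+\lvert\s''\rvert \;\leq\; \lvert\str(\s')\rvert+\lvert\str(\s'')\rvert \;=\; \lvert\str(\s')\oplus_x\str(\s'')\rvert \;\leq\; \lvert\str(\s)\rvert,
\end{equation*}
whose four ingredients are, in order: the partition of $\s$ by the value at $x$; Lemma~\ref{lem:upper inequality} applied separately to $\s'$ and $\s''$; Lemma~\ref{lem:oplus trivial} (valid since neither $\str(\s')$ nor $\str(\s'')$ contains a subset involving $x$); and Lemma~\ref{lem:str and oplus}.

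For $(1)\Rightarrow(2)$, the hypothesis $SE(\s)$ together with Theorem~\ref{thm:SE char} tells us that the two ends of the chain coincide, so every intermediate $\leq$ is actually an $=$. The equalities $\lvert\s'\rvert=\lvert\str(\s')\rvert$ and $\lvert\s''\rvert=\lvert\str(\s'')\rvert$ yield $SE(\s')$ and $SE(\s'')$ by Theorem~\ref{thm:SE char}, while $\lvert\str(\s')\oplus_x\str(\s'')\rvert=\lvert\str(\s)\rvert$, combined with the inclusion of Lemma~\ref{lem:str and oplus}, upgrades to the required set equality $\str(\s')\oplus_x\str(\s'')=\str(\s)$. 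The converse $(2)\Rightarrow(1)$ is immediate: under the hypotheses of $(2)$ the chain collapses into a chain of equalities, so $\lvert\s\rvert=\lvert\str(\s)\rvert$, whence $SE(\s)$ by Theorem~\ref{thm:SE char}.

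For $(1)\Leftrightarrow(3)$ the plan is to apply the Duality Transformation to the argument just given. Three ingredients are needed: the dual of Lemma~\ref{lem:upper inequality}, i.e.\ the inequality $\lvert\sstr(\s)\rvert\leq\lvert\s\rvert$ already obtained as part of the Sandwich Theorem; the dual of Lemma~\ref{lem:oplus trivial}, which coincides with Lemma~\ref{lem:oplus trivial} itself since it mentions neither $\str$ nor $\sstr$; and the dual of Lemma~\ref{lem:str and oplus}, namely the inclusion $\sstr(\s)\subseteq\sstr(\s')\oplus_x\sstr(\s'')$. Only this last item is not already on record, and it will follow by a short direct unpacking of $\sstr$: a witness $g$ to $B\in\sstr(\s)$ either has a fixed value at $x$ (if $x\notin B$, placing $B$ in $\sstr(\s')\cup\sstr(\s'')$) or forces both values at $x$ (if $x\in B$, so that $B-\{x\}$ lies in $\sstr(\s')\cap\sstr(\s'')$ and hence $B\in\sstr(\s')\oplus_x\sstr(\s'')$).

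The main obstacle will be the bookkeeping around the equality cases in the forward direction---ensuring that all three conclusions of $(2)$ are indeed forced by the single hypothesis $\lvert\s\rvert=\lvert\str(\s)\rvert$---and the standalone verification of the dual of Lemma~\ref{lem:str and oplus}, which is the one ingredient without an existing reference in the paper.
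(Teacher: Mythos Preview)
Your argument is correct and follows essentially the same route as the paper's proof. The paper packages your chain-of-inequalities bookkeeping as Conclusion~(b) of Lemma~\ref{lem:central lemma str} and then invokes Theorem~\ref{thm:SE char}, while you unwind that lemma inline; the duality step for $(1)\Leftrightarrow(3)$ is handled identically. One small remark: the dual of Lemma~\ref{lem:str and oplus} that you verify by hand was in fact already established earlier in the paper (when the Duality Transformation was applied to Lemmas~\ref{lem:central lemma str}, \ref{lem:str and oplus}, \ref{lem:upper inequality} to complete the Sandwich Theorem), so your direct check is redundant but harmless.
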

\begin{proof}
The proof is divided into two parts.

First part: $1\iff 2$. By Lemmas \ref{lem:oplus trivial} and \ref{lem:str and oplus} it follows that 
$$\str(\s')\oplus_x \str(\s'')=\str(\s) \iff \lvert \str(\s')\rvert+\lvert \str(\s'')\rvert=\lvert \str(\s)\rvert.$$
This, combined with Conclusion $(b)$ of Lemma \ref{lem:central lemma str} and Theorem \ref{thm:SE char}, concludes the first part of this proof.

Second part: $1\iff 3$. This part is the dual of the first part. It is easy to verify that the dual of the proof given for the first part is valid.
\end{proof}

\subsection{Maximum systems}
The class of Maximum systems is the ``famous relative'' of the class of SE-systems.
\begin{definition}[Maximum systems]
$\s$ is called a \deftext{Maximum system} if it meets Sauer's lemma with equality: $$|\s|= \sum_{i=0}^{\vc(\s)}{{n}\choose{i}}$$
\end{definition}
Maximum systems and their applications in machine learning have been discussed in \cite{RR1,RR2,RR3,WK07,WF95}. The relation between maximum systems and arrangements of hyper-planes in a Euclidean space have been discussed in \cite{Welzl}

SE systems are an extension of maximum systems (i.e every maximum system is SE). Moreover, SE systems seem to possess all ``nice properties'' of maximum systems and some other ``nice properties'' that maximum systems do not have. (See for example Theorem \ref{thm:preservation under restrictions}) \\
%\shay{Here we could mention that we work on a paper that will demonstrate it in the field of machine-learning}
  %Inequalities and basic characterizations of SE systems

\newpage
\section{Operations on systems}

\subsection{Pre-systems and restrictions}
Let $\s$ be a system and let $C\subseteq C(\s)$ be a cube. Consider the structure $\sys{S(\s)\cap C}{C}$. According to our terminology this structure is not a system. We refer to it as a ``pre-system''. Namely:
\begin{definition}[pre-system]
Let $X$ be a set. A pre-system is a pair $\sys{S}{C'}$ where $C'$ is a cube of $\{0,1\}^X$ and $S\subseteq C'$
\end{definition}
Clearly, every pre-system $\sys{S}{C'}$ can be translated to a system by restricting its vertices to $\dim(C')$ as follows:
\begin{definition}[Normalizing]
Let $X$ be a set and let $C\subseteq\{0,1\}^X$ be a cube. Define $\normalizealone:C\rightarrow\{0,1\}^{\dim(C)}$ by
	$$
	\normalize{f}=\restr{f}{\dim(C)}
	$$
This mapping is called {\it the normalizing of $C$}.
\end{definition}
this mapping is naturally extended to subsets of $C$ as follows, Let $S\subseteq C$. Then $\normalize{S} \defeq\{\normalize{f}~\vert~f\in S\}$; and to pre-systems. Let $\s=\sys{S}{C}$ be a pre-system. Then	$\normalize{\s} \defeq\sys{\normalize{S}}{\{0,1\}^{\dim(C)}}$. Note that the normalization of a pre-system is a system.
\begin{definition}[restriction]
Let $\s$ be a system, let $C'\subseteq C(\s)$ be a cube. Let $\restr{\s}{C'}$ denote the normalization of the pre-system $\sys{S(\s)\cap C'}{C'}$. This system is called the \deftext{restriction} of $\s$ to $C'$
\end{definition}
\subsubsection{Restrictions preserve SE}
The following important property of $SE$ systems is an immediate corollary of Lemma \ref{thm:RRR}
\begin{theorem}\label{thm:preservation under restrictions}
Let $\s$ be a system, and let $C$ be a cube. Then: $$SE(\s)\implies SE(\restr{\s}{C})$$ 
\end{theorem}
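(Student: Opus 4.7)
The plan is to prove the theorem by induction on the codimension $|\dim(\s)| - |\dim(C)|$ of the cube $C$, peeling off one fixed coordinate at a time and invoking Theorem \ref{thm:RRR} at each step.

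For the base case, when $|\dim(\s)| - |\dim(C)| = 0$, we have $C = C(\s)$, and so $\restr{\s}{C}$ is just $\s$ after a trivial normalization; SE is preserved.

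For the inductive step, assume $|\dim(\s)| - |\dim(C)| \geq 1$ and pick some $x \in \dim(\s) \setminus \dim(C)$. By definition of a cube, every $f \in C$ agrees on $x$, so there is a fixed value $v \in \{0,1\}$ with $f(x) = v$ for all $f \in C$. Let $\{\s', \s''\}$ be the pair of restrictions of $\s$ associated with $x$ (Definition \ref{def:restrictions on x}), and let $\s^*$ be whichever of the two corresponds to $f(x) = v$; equivalently, $\s^*$ is the normalization of $\sys{S(\s) \cap D}{D}$ where $D$ is the $(\dim(\s)-\{x\})$-cube defined by $f(x)=v$. Since $SE(\s)$, Theorem \ref{thm:RRR} gives $SE(\s^*)$.

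Now $C \subseteq D$ is itself a cube of $\{0,1\}^{\dim(\s)}$, and under the normalization $\normalizealone$ of $D$ it maps to a cube $C^* \subseteq C(\s^*)$ with $\dim(C^*) = \dim(C)$. Two easy verifications (that a restriction followed by a restriction can be performed in one shot, and that normalization commutes with these restrictions) show that $\restr{\s}{C} = \restr{\s^*}{C^*}$. Since $|\dim(\s^*)| - |\dim(C^*)| = |\dim(\s)| - |\dim(C)| - 1$, the induction hypothesis applied to $\s^*$ and $C^*$ yields $SE(\restr{\s^*}{C^*}) = SE(\restr{\s}{C})$, completing the induction.

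The main obstacle is not any conceptual difficulty but the bookkeeping in the last paragraph: one must check that the two-step restriction (first to the hyperplane $\{f(x)=v\}$, then down to $C$) composes correctly with the normalization so that it produces the same system as the direct restriction to $C$. Once this is handled, Theorem \ref{thm:RRR} does all the real work, and the result is indeed immediate.
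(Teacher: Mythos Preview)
Your proof is correct and is precisely what the paper intends: the paper merely states that the theorem is ``an immediate corollary of'' Theorem~\ref{thm:RRR}, and your induction on the codimension of $C$, peeling off one fixed coordinate at a time, is exactly the natural unpacking of that remark. The bookkeeping you flag in the last paragraph is routine and the paper leaves it implicit.
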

The property of ``being a maximum system'' is not preserved by restrictions. Therefore this theorem is an example of the advantage of SE systems over maximum systems.
%Theorem \ref{thm:preservation under restrictions} amounts to: $SE(\s)\implies\forall k\in\mathbb{N}: SE_{k}(\s)$

\subsection{Boolean operations} 
\begin{definition}[Boolean operations]
Let $\sysnot{A},\sysnot{B}$ be two systems with $\dim(\sysnot{A})=\dim(\sysnot{B})=X$. The \deftext{union} and \deftext{intersection} of $\sysnot{A},\sysnot{B}$ is defined as follows:
\begin{enumerate}
	\item{$\sysnot{A}\syscup \sysnot{B}\defeq\sys{S(\sysnot{A})\cup S(\sysnot{B})}{\{0,1\}^X}$}
	\item{$\sysnot{A}\syscap \sysnot{B}\defeq\sys{S(\sysnot{A})\cap S(\sysnot{B})}{\{0,1\}^X}$}
\end{enumerate}
\end{definition}
Note that if $\dim(\sysnot{A})\neq \dim(\sysnot{B})$ then these operations are undefined and that both $\syscup,\syscap$ are associative and commutative binary operations.
These two operations and the complement operator (\ref{def:set-sys}) satisfy the relations of the (corresponding) operations in a boolean algebra.\\ 
As an example, the following is a trivial variant of the famous De-Morgan's laws:
\begin{enumerate} 
\item{$\neg(\sysnot{A}\syscup\sysnot{B})=(\neg\sysnot{A})\syscap(\neg\sysnot{B})$}
\item{$\neg(\sysnot{A}\syscap\sysnot{B})=(\neg \sysnot{A})\syscup(\neg\sysnot{B})$}
\end{enumerate}

These operations are used as unary operators in the standard way:
Let $\mathcal{T}$ be a collection of systems such that $\forall\s,\s'\in\mathcal{T}:\dim(\s)=\dim(\s')$. $\bigsyscup{\mathcal{T}}~$ denotes the union over all elements of $\mathcal{T}$ and $\bigsyscap{\mathcal{T}}~$ denotes the intersection over all elements of $\mathcal{T}$.

%\newpage
\subsection{The Boolean operators $\inter{Y}$ and $\union{Y}$}
\begin{definition}[$\inter{Y}$ and $\union{Y}$]\label{def:bool ops}
	Let $Y$ be a set. The following operators, $\inter{Y},\union{Y}$ are defined only on systems $\s$ such that $Y\subseteq \dim(\s)$. In this case:
 	\begin{enumerate}
 		\item{$$\inter{Y}(\s)\defeq\bigsyscap{\{\restr{S}{C}~\vert~C\mbox{ is a cube with }\dim(C)=\dim(\s)-Y\}}$$}
 		\item{$$\union{Y}(\s)\defeq\bigsyscup{\{\restr{S}{C}~\vert~C\mbox{ is a cube with }\dim(C)=\dim(\s)-Y\}}$$}
 	\end{enumerate}
 	For an operator $q\in\{\union{Y},\inter{Y}\}$, let $\dim(q)\defeq Y$.
\end{definition}
The operators $\inter{Y},\union{Y}$ are extensively used under many different notations in the literature. Most works noted the duality between these opersators. However, our notations highlights the fact that this duality stems from the famous duality of Boolean algebra. For example, several works (\cite{Dress2}- page 674,(11) and \cite{Welzl}- observation 22) observed the following lemma, Lemma \ref{lem:de-morgan}, but didn't notice that this lemma is the famous De-morgan laws.
\begin{lemma}[De-Morgan]\label{lem:de-morgan}
	Let $Y$ be a set, then:
	\begin{itemize}
	\item{$\inter{Y}\circ\neg=\neg\circ\union{Y}$}
	\item{$\union{Y}\circ\neg=\neg\circ\inter{Y}$}
	\end{itemize}
\end{lemma}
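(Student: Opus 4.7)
The plan is to reduce Lemma \ref{lem:de-morgan} to the De-Morgan laws for the Boolean system operations $\syscup$ and $\syscap$ stated just above. Since $\inter{Y}$ and $\union{Y}$ are defined as big intersections and unions (respectively) of restrictions over the family of cubes $\mathcal{F}_Y \defeq \{C \subseteq C(\s) : C\text{ is a cube with }\dim(C)=\dim(\s)-Y\}$, if we can show that the complement operator commutes with each individual restriction $\restr{\,\cdot\,}{C}$, then the De-Morgan laws for $\bigsyscap/\bigsyscup$ finish the job.

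First, I would prove the following bridging identity: for every system $\s$ and every cube $C \subseteq C(\s)$,
\[
\restr{(\neg\s)}{C} \;=\; \neg\bigl(\restr{\s}{C}\bigr).
\]
The verification is a direct unfolding of definitions. The normalization map $\normalizealone : C \to \{0,1\}^{\dim(C)}$ is a bijection, since every $f \in C$ is uniquely determined by $\restr{f}{\dim(C)}$ (its values outside $\dim(C)$ are pinned by $C$). Therefore
\[
\normalize{C - (S(\s)\cap C)} \;=\; \{0,1\}^{\dim(C)} - \normalize{S(\s)\cap C}.
\]
Applying this with $S(\neg\s) \cap C = C - (S(\s)\cap C)$ gives exactly $S(\restr{(\neg\s)}{C}) = S(\neg(\restr{\s}{C}))$, and both sides have the same ambient cube $\{0,1\}^{\dim(C)}$, proving the bridging identity.

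With this in hand, the proof of the first item is a one-line computation. For any system $\s$ with $Y \subseteq \dim(\s)$,
\begin{align*}
\inter{Y}(\neg\s)
  &= \bigsyscap\bigl\{\restr{(\neg\s)}{C} \,:\, C \in \mathcal{F}_Y\bigr\}
  && \text{by Definition \ref{def:bool ops}}\\
  &= \bigsyscap\bigl\{\neg(\restr{\s}{C}) \,:\, C \in \mathcal{F}_Y\bigr\}
  && \text{by the bridging identity}\\
  &= \neg\,\bigsyscup\bigl\{\restr{\s}{C} \,:\, C \in \mathcal{F}_Y\bigr\}
  && \text{by De-Morgan for }\syscap,\syscup\\
  &= \neg\,\union{Y}(\s)
  && \text{by Definition \ref{def:bool ops}.}
\end{align*}
The second item then follows by applying the first to $\neg\s$ and using $\neg\neg\s = \s$, or by an entirely symmetric computation swapping the roles of $\syscap$ and $\syscup$.

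There is no real obstacle; the only subtle point is the bridging identity, which requires one to notice that normalization is a bijection between $C$ and $\{0,1\}^{\dim(C)}$ so that set complementation inside $C$ matches set complementation inside $\{0,1\}^{\dim(C)}$. Once this is in place, the lemma really is just De-Morgan of $\syscap$ and $\syscup$ applied cube-by-cube, which is precisely the point the authors emphasize in the discussion preceding the statement.
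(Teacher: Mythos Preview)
Your proof is correct and is exactly the argument the paper has in mind: the lemma is stated there without proof, the surrounding text making clear it is ``just'' the De-Morgan laws for $\syscap,\syscup$ applied to the family of restrictions, which is precisely what you carry out (with the bridging identity $\restr{(\neg\s)}{C}=\neg(\restr{\s}{C})$ as the only nontrivial step).
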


\subsubsection{Analogues of theorems and lemmas}
\begin{definition}\label{def:derivatives on x}
Let $\s$ be a system and let $x\in \dim(\s)$. The \deftext{derivatives of $\s$ associated with $x$},are the set $\{\union{\{x\}}(\s),\inter{\{x\}}(\s)\}$.
\end{definition}
Several lemmas and theorems from chapter $2$ contain the phrase ``let $\{\s',\s''\}$ be the pair of restrictions of $\s$ associated with $x$''. Namely: Lemma \ref{lem:str and oplus} and Theorem \ref{thm:RRR}. The only properties of the above $\s',\s''$ that were used in proving these lemmas and theorems are: $\lvert\s'\rvert+\lvert\s''\rvert=\lvert\s\rvert$ and the fact that $\{\s',\s''\}$ satisfy Lemma \ref{lem:str and oplus}. It is easy to verify that the pair of derivatives (associated with $x$) of a system $\s$ also have these two properties. Hence these theorems and lemmas hold when the above phrase is replaced with the phrase ``let $\{\sysnot{D}',\sysnot{D}''\}$ be the two derivatives of $\s$ associated with $x$''. Hence, the Sandwich Theorem may be proved by induction via the derivatives rather than the restrictions. An important result that is established this way is the analogue of Theorem \ref{thm:RRR}:
\begin{theorem}\label{thm:QQQ}
Let $\s$ be a system, let $x\in \dim(\s)$ and let $\sysnot{D}',\sysnot{D}''$ be the pair of derivatives of $\s$ associated with $x$. Then the following statements are equivalent:
	\begin{enumerate}
	\item{$SE(\s)$.}
	\item{$SE(\sysnot{D}')$ and $SE(\sysnot{D}'')$ and $\str(\sysnot{D}')\oplus_x \str(\sysnot{D}'')=\str(\s)$.}
	\item{$SE(\sysnot{D}')$ and $SE(\sysnot{D}'')$ and $\sstr(\sysnot{D}')\oplus_x \sstr(\sysnot{D}'')=\sstr(\s)$.}
	\end{enumerate}
\end{theorem}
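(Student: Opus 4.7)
The plan is to reduce to Theorem \ref{thm:RRR} by observing that its proof used only two facts about the pair $(\s',\s'')$ of restrictions: (i) $|\s'|+|\s''|=|\s|$, and (ii) $\str(\s')\oplus_x\str(\s'')\subseteq\str(\s)$ (Lemma \ref{lem:str and oplus}). So I would verify both facts for the pair of derivatives $(\sysnot{D}',\sysnot{D}'')=(\union{\{x\}}(\s),\inter{\{x\}}(\s))$ and then transcribe the proof of Theorem \ref{thm:RRR} verbatim, relying on Lemma \ref{lem:central lemma str} and Theorem \ref{thm:SE char} for $1\iff 2$ and on the Duality Transformation for $1\iff 3$.

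The key observation that drives the verification is that only two cubes of $C(\s)$ have dimension $\dim(\s)-\{x\}$, namely the two ``halves'' of $C(\s)$ determined by the value of the $x$-coordinate, and their normalized restrictions are exactly $\s'$ and $\s''$. Consequently $\sysnot{D}'=\s'\syscup\s''$ and $\sysnot{D}''=\s'\syscap\s''$, so (i) reduces to the inclusion-exclusion identity $|S(\s')\cup S(\s'')|+|S(\s')\cap S(\s'')|=|S(\s')|+|S(\s'')|=|\s|$.

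For (ii), unwinding definitions shows that shattering of $Y$ by the union $\sysnot{D}'$ forces $Y\in\str(\s)$ (an element of $S(\s')\cup S(\s'')$ lifts to an element of $S(\s)$ agreeing with the chosen $f$), while shattering of $Y$ by the intersection $\sysnot{D}''$ forces $Y\cup\{x\}\in\str(\s)$ (an element of $S(\s')\cap S(\s'')$ lifts to two elements of $S(\s)$ differing only at $x$). Combined with the trivial monotonicity $\str(\sysnot{D}'')\subseteq\str(\sysnot{D}')$, these two observations cover all three parts of the set $\str(\sysnot{D}')\oplus_x\str(\sysnot{D}'')$. The main obstacle I anticipate is in this step: the roles of $\sysnot{D}'$ and $\sysnot{D}''$ are genuinely asymmetric --- the union contributes shattered sets not containing $x$, the intersection contributes shattered sets extendable by $x$ --- and one has to match this asymmetry carefully against the definition of $\oplus_x$ before the verification becomes routine.
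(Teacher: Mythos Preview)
Your proposal is correct and follows essentially the same route as the paper: the paper explicitly states that the only properties of the pair $\{\s',\s''\}$ used in the proofs of Lemma~\ref{lem:str and oplus} and Theorem~\ref{thm:RRR} are (i) $|\s'|+|\s''|=|\s|$ and (ii) the inclusion of Lemma~\ref{lem:str and oplus}, and that the derivatives also satisfy these two properties, so Theorem~\ref{thm:QQQ} follows by the identical argument. Your verification of (i) via inclusion--exclusion and of (ii) via the union/intersection description is exactly what the paper leaves as ``easy to verify''; the one remark worth noting is that the paper emphasizes (and finds ``quite surprising'') that once (i) and (ii) are in hand the rest of the proof is \emph{symmetric} in $\sysnot{D}',\sysnot{D}''$, so the asymmetry you flag as the main obstacle is confined entirely to the verification step and does not propagate further.
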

It is quite surprising that these claims concerning $\sysnot{D}',\sysnot{D}''$ are established using arguments that are symmetric in $\sysnot{D}'$ and $\sysnot{D}''$.

\subsubsection{$\inter{Y}$ and $\union{Y}$ preserve $SE$}
The following theorem was proven by several authors:
\begin{theorem}\label{thm:SE kept under bool}
Let $\s$ be a system and let $Y\subseteq \dim(\s)$, then
	\begin{itemize}
	\item{$\inter{Y}(\s)$ is SE}
	\item{$\union{Y}(\s)$ is SE}
	\end{itemize}
\end{theorem}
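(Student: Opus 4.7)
The plan is to reduce to preservation of \emph{lopsidedness} and then handle $\union{Y}$ by De-Morgan. Assuming $\s$ is SE (the intended hypothesis given the subsection title; without it the stated conclusion fails, e.g.\ for $\s$ on $\{1,2,3\}$ with $S(\s)=\{000,001,110,111\}$ the system $\inter{\{3\}}(\s)$ is $\{(0,0),(1,1)\}$ on $\{1,2\}$, which is not SE), Theorem~\ref{thm:SE=Lopsided} says SE equals lopsidedness and Theorem~\ref{thm:lopsided and complementing} says complementation preserves lopsidedness. Using the identity $\union{Y}(\s)=\neg \inter{Y}(\neg \s)$ from Lemma~\ref{lem:de-morgan}, it therefore suffices to prove the single implication: if $\s$ is lopsided then $\sysnot{I}=\inter{Y}(\s)$ is lopsided.

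Fix a partition $\{X',X''\}$ of $\dim(\sysnot{I})=\dim(\s)-Y$. The idea is to transfer the lopsidedness of $\s$ at the ``enlarged'' partition $\{X'\cup Y,X''\}$ of $\dim(\s)$. Directly unpacking the definition of $\inter{Y}$, the condition ``(a): $\sysnot{I}$ contains an $X'$-cube'' is equivalent to ``(a'): $\s$ contains an $(X'\cup Y)$-cube''; both assert the existence of $\phi''\in\{0,1\}^{X''}$ with $\phi''\mymerge\phi'\mymerge\alpha\in S(\s)$ for every $\phi'\in\{0,1\}^{X'}$ and every $\alpha\in\{0,1\}^Y$. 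On the other side, $\neg\sysnot{I}=\union{Y}(\neg\s)$ by Lemma~\ref{lem:de-morgan}, and ``(b): $\neg\sysnot{I}$ contains an $X''$-cube'' says: there is $\phi'\in\{0,1\}^{X'}$ such that for every $\phi''\in\{0,1\}^{X''}$ some $\alpha_{\phi''}$ satisfies $\phi''\mymerge\phi'\mymerge\alpha_{\phi''}\notin S(\s)$. This is strictly weaker than ``(b'): $\neg\s$ contains an $X''$-cube'', in which a single $\alpha$ works for every $\phi''$; so (b')$\Rightarrow$(b), but not conversely in general.

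To finish I would verify the two halves of the dichotomy for $\sysnot{I}$. For \emph{at most one}: assuming (a) and (b) both held with witnesses $\phi''_0$ and $\phi'_0$, applying (b) at $\phi''=\phi''_0$ would yield $\alpha_0$ with $\phi''_0\mymerge\phi'_0\mymerge\alpha_0\notin S(\s)$, which directly contradicts (a) evaluated at $(\phi'_0,\alpha_0)$. For \emph{at least one}: lopsidedness of $\s$ at $\{X'\cup Y,X''\}$ forces exactly one of (a'), (b') to hold; combining (a)$\iff$(a') with (b')$\Rightarrow$(b) yields (a)$\lor$(b), completing the dichotomy and hence lopsidedness of $\sysnot{I}$.

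The main obstacle is identifying the correct ``shifted'' partition on $\dim(\s)$ and recognizing the quantifier asymmetry between (b) (where $\exists\alpha$ sits inside $\forall\phi''$) and (b') (where $\exists\alpha$ sits outside). It is precisely this asymmetry that makes the direction from lopsidedness of $\s$ to lopsidedness of $\sysnot{I}$ go through while the converse fails in general: the weaker condition (b) is easier to realize, yet still strong enough to be incompatible with (a), which is all that uniqueness requires.
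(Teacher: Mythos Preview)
Your proof is correct and takes a genuinely different route from the paper. The paper derives the theorem from Theorem~\ref{thm:QQQ} (the recursive characterization via derivatives, itself resting on the Sandwich-theorem machinery) together with the composition identity $\alpha_{\{x\}}\circ\alpha_{Y'}=\alpha_{\{x\}\cup Y'}$: since $SE(\s)$ forces both one-step derivatives $\inter{\{x\}}(\s)$ and $\union{\{x\}}(\s)$ to be SE, one peels off the elements of $Y$ one at a time. Your argument instead works directly with the lopsidedness definition at the enlarged partition $\{X'\cup Y,\,X''\}$, using only Theorem~\ref{thm:SE=Lopsided} and De-Morgan and bypassing Theorem~\ref{thm:QQQ} and the Sandwich theorem entirely. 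This is more elementary and handles arbitrary $Y$ in one step rather than by induction on $\lvert Y\rvert$; the paper's route has the compensating advantage of reusing a structural result that carries additional information. Your observation that the ``at most one'' half of the dichotomy holds for every system (not just lopsided ones) is correct, as is your identification of the missing hypothesis $SE(\s)$ in the theorem's statement.
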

This theorem is easily follows from Theorem \ref{thm:QQQ} and the following simple fact.
Let $Y,Y'$ be disjoint sets. and let $\alpha$ be one of the symbols ``$\bigsyscap$'', ``$\bigsyscup$''. Then:
$$\alpha_Y\circ\alpha_Y'=\alpha_{Y\cup Y'}$$

\subsection{Down-Shifting}
The down shifting operator is often used in the context of shattering. We prefer to present this operator in the terminology of oblivious sorting algorithms (See \cite{Knuth}). To this end, we present a system $\s$ by its characteristic function and assume that every $v\in C(\s)$ has a bit $b_v$ that is either $1$ or $0$ according to whether $v\in S(\s)$ or $v\notin S(\s)$. Our ``algorithms'' transform the system $\s$ by changing these bits.
 
Note that a 1-dimensional cube contains two vertices. Therefore we refer to such cubes as {\it edges} and to an $x$-cube as an {\it $x$-edge}.

For a set $X$, we let $\smaller$ denote the partial order on $\{0,1\}^X$ that is the product of the order $0<1$.\\
\begin{definition}
We say that an edge $e$ is {\it sorted under a system $\s$} if: 
$$e=\{u',u''\},~u'\smaller u''\mbox{ and } b_{u'}\geq b_{u''}.$$
\end{definition}
Note that the order is reversed. This is not a typo; rather, it follows the standard definition of down shifting.

The most basic operation of our algorithms {\it sorts} an edge $e=\{u,v\}$ by permuting, if needed, the contents of $b_u,b_v$.
Note that this operation mimics a ``comparator'' (see \cite{Knuth}) of a sorting network.

\begin{definition}[Down-shifting on $x$]
Let $\s$ be a system and let $x\in \dim(\s)$. \deftext{Down shifting $\s$ on $x$} is the operation of sorting every $x$-edge. The resulting system is denoted $\dshift_x(\s)$.
\end{definition}
%\shay{TODO: EXAMPLE:...}
%\newpage
\begin{lemma}\label{lem:shifting, restrictions and derivatives}
Let $\s$ be a system and let $x\in \dim(\s)$. Let $\{\sysnot{D}',\sysnot{D}''\}$ be the pair of derivatives of $\s$ associated with $x$ and let $\{\sysnot{R}',\sysnot{R}''\}$ be the pair of restrictions of $\dshift_x(\s)$ associated with $x$. Then:
	$$
	\{\sysnot{R}',\sysnot{R}''\}=\{\sysnot{D}',\sysnot{D}''\}
	$$
\end{lemma}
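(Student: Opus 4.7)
The plan is to unpack each of the four objects involved and then compare them pointwise on the hyperplane $\{0,1\}^{\dim(\s)-\{x\}}$. For any $w \in \{0,1\}^{\dim(\s)-\{x\}}$, let $w_0$ and $w_1$ denote the two extensions of $w$ to $\dim(\s)$ with $w_i(x) = i$. Then $\{w_0, w_1\}$ is an $x$-edge, and as $w$ ranges over $\{0,1\}^{\dim(\s)-\{x\}}$ these edges partition $C(\s)$ and account for every bit pair modified by $\dshift_x$.

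First I would translate the two derivatives. By Definition \ref{def:bool ops} applied with $Y = \{x\}$, the two $(\dim(\s)-\{x\})$-cubes are exactly the hyperplanes $\{f : f(x) = 0\}$ and $\{f : f(x) = 1\}$, whose restrictions record, after normalizing, whether $w_0 \in S(\s)$ and whether $w_1 \in S(\s)$. Hence for each such $w$,
\begin{align*}
w \in \union{\{x\}}(\s) &\iff w_0 \in S(\s) \text{ or } w_1 \in S(\s),\\
w \in \inter{\{x\}}(\s) &\iff w_0 \in S(\s) \text{ and } w_1 \in S(\s).
\end{align*}

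Next I would unpack down-shifting. Since $w_0 \smaller w_1$, the edge $\{w_0, w_1\}$ is sorted under $\dshift_x(\s)$, meaning $b_{w_0} \geq b_{w_1}$; this is achieved by leaving the pair unchanged when $(b_{w_0}, b_{w_1}) \in \{(0,0), (1,0), (1,1)\}$ and by swapping when $(b_{w_0}, b_{w_1}) = (0,1)$. A four-case check then yields
\begin{align*}
w_0 \in S(\dshift_x(\s)) &\iff w_0 \in S(\s) \text{ or } w_1 \in S(\s),\\
w_1 \in S(\dshift_x(\s)) &\iff w_0 \in S(\s) \text{ and } w_1 \in S(\s).
\end{align*}
Letting $\sysnot{R}_0$ and $\sysnot{R}_1$ denote the restrictions of $\dshift_x(\s)$ associated with $x$ in the obvious indexing (the one recording membership of $w_0$ and $w_1$ respectively, after normalization), the right-hand sides of the two pairs of displayed equivalences match, giving $\sysnot{R}_0 = \union{\{x\}}(\s)$ and $\sysnot{R}_1 = \inter{\{x\}}(\s)$. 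This yields the claimed equality of unordered pairs.

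There is no real obstacle here beyond correctly aligning the definitions; the only subtlety is the (intentional) reversal of the order in the definition of a sorted edge, which makes the $x = 0$ face accumulate the union and the $x = 1$ face the intersection, rather than the other way around. Once this sign is tracked, the proof is a direct case analysis on a single $x$-edge.
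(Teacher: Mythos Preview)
Your proof is correct. The paper states this lemma without proof, treating it as an immediate observation; your argument is exactly the natural direct verification one would supply---unpacking the definitions of $\union{\{x\}}$, $\inter{\{x\}}$, and $\dshift_x$ on a single $x$-edge and matching the four cases---and there is nothing to add or correct.
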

\begin{definition}
Let $\s$ be a system.
\begin{itemize}
	\item{We say that $\s$ is \deftext{$x$-sorted } if every $x$-edge is sorted under $\s$.}
	\item{We say that $\s$ is \deftext{edge-sorted} if every edge is sorted under $\s$.}
\end{itemize}
\end{definition}
The following lemma is easy:
\begin{lemma}\label{lem:oplus and str for monotone systems}
Let $\s$ be a system and let $x\in \dim(\s)$ such that $\s$ is $x$-sorted and let $\{\s',\s''\}$ be the pair of restrictions of $\s$ associated with $x$. Then:
	\begin{enumerate}
	\item{$\str(\s')\oplus_x \str(\s'')=\str(\s)$}
	\item{$\sstr(\s')\oplus_x \sstr(\s'')=\sstr(\s)$}
	\end{enumerate}
\end{lemma}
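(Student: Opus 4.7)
The plan begins with a key structural observation. Since $\s$ is $x$-sorted, every $x$-edge $\{u',u''\}$ with $u'\smaller u''$ (so $u'(x)=0$ and $u''(x)=1$) satisfies: if $u''\in S(\s)$ then $u'\in S(\s)$. Passing to the restrictions, this translates directly to $S(\s'')\subseteq S(\s')$ as subsets of $\{0,1\}^{\dim(\s)-\{x\}}$. In other words, $\s'$ and $\s''$ coincide with $\union{\{x\}}(\s)$ and $\inter{\{x\}}(\s)$ respectively. This comparability between the two restrictions is what will let us combine witnesses below.

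For part (1), the inclusion $\str(\s')\oplus_x\str(\s'')\subseteq\str(\s)$ is already Lemma \ref{lem:str and oplus}, so only the reverse inclusion requires work. I would take $Y\in\str(\s)$ and split on whether $x\in Y$. If $x\in Y$, let $Y'=Y-\{x\}$; for each $f'\in\{0,1\}^{Y'}$, applying the shattering of $\s$ separately to $f'\mymerge\{x\mapsto 0\}$ and $f'\mymerge\{x\mapsto 1\}$ yields witnesses whose restrictions to $\dim(\s)-\{x\}$ lie in $S(\s')$ and $S(\s'')$ respectively. Hence $Y'\in\str(\s')\cap\str(\s'')$ and so $Y\in\str(\s')\oplus_x\str(\s'')$. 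If $x\notin Y$, fix $f\in\{0,1\}^Y$ and let $g$ witness the shattering at $f$; if $g(x)=1$, toggling $g(x)$ to $0$ produces the smaller endpoint of an $x$-edge, which the $x$-sorted hypothesis keeps inside $S(\s)$. Thus we may always arrange $g(x)=0$, yielding $Y\in\str(\s')$ and a fortiori $Y\in\str(\s')\oplus_x\str(\s'')$.

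For part (2), I would prove both inclusions directly. The direction $\sstr(\s)\subseteq\sstr(\s')\oplus_x\sstr(\s'')$ is a clean case analysis on a strong-shattering witness $g$: when $x\in Y$ the same $g$ witnesses $Y-\{x\}\in\sstr(\s')\cap\sstr(\s'')$; when $x\notin Y$, the value $g(x)\in\{0,1\}$ places $Y$ into $\sstr(\s')$ or $\sstr(\s'')$ accordingly. The reverse inclusion splits similarly, but the interesting case is $Y=Y'\cup\{x\}$ with $Y'\in\sstr(\s')\cap\sstr(\s'')$, where one must fuse two a priori distinct witnesses $g'_0,g''_0$ into a single $g\in\{0,1\}^{\dim(\s)-Y}$ that works for both values of $x$. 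This is the main obstacle, and it is resolved precisely by the structural observation above: since $S(\s'')\subseteq S(\s')$, the witness $g''_0$ for $Y'\in\sstr(\s'')$ automatically also witnesses $Y'\in\sstr(\s')$. Therefore $g''_0$, lifted to $\{0,1\}^{\dim(\s)-Y}$, witnesses $Y\in\sstr(\s)$, completing the proof.
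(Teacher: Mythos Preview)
Your proof is correct. The paper does not actually prove this lemma; it is introduced with the phrase ``The following lemma is easy'' and left without argument. Your write-up makes explicit precisely the observation that justifies calling it easy: $x$-sortedness forces $S(\s'')\subseteq S(\s')$ (equivalently, by Lemma~\ref{lem:shifting, restrictions and derivatives} applied to the fixed point $\dshift_x(\s)=\s$, the restrictions coincide with the derivatives $\union{\{x\}}(\s)$ and $\inter{\{x\}}(\s)$). Once this comparability is in hand, your case analyses for both parts go through cleanly. One small efficiency you could mention: for part~(2), the inclusion $\sstr(\s)\subseteq\sstr(\s')\oplus_x\sstr(\s'')$ is exactly the dual of Lemma~\ref{lem:str and oplus} (which the paper invokes in proving the Sandwich Theorem), so it holds for all $\s$, not just $x$-sorted ones; only the reverse inclusion genuinely needs the hypothesis.
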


It is sometimes desired to translate our systems to the ``classical'' format of set-system. To this end:
\begin{definition}
Let $\s$ be a system.
	$$\set(\s)\defeq\{f^{-1}(1)~\vert~f\in S(\s)\}$$
\end{definition}

Observe that a system $\s$ is edge-sorted if and only if $\set(\s)$ is closed under the subset relation.
Using this observation is not hard to prove that:
\begin{lemma}
Let $\s$ be an edge-sorted system. Then:
$$\sstr(\s)=\str(\s)=\set(\s)$$
\end{lemma}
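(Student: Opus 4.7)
The plan is to chain the inclusions
\[
\set(\s)\subseteq\sstr(\s)\subseteq\str(\s)\subseteq\set(\s),
\]
using throughout the observation stated just before the lemma: an edge-sorted system is exactly one whose associated family $\set(\s)$ is closed under taking subsets. The middle inclusion $\sstr(\s)\subseteq\str(\s)$ is free of charge, being noted in the paper as an immediate consequence of the definitions.

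For $\set(\s)\subseteq\sstr(\s)$, I would take $Y\in\set(\s)$, so there exists $f\in S(\s)$ with $f^{-1}(1)=Y$. The natural candidate witness for strong shattering is $g\in\{0,1\}^{\dim(\s)-Y}$ given by $g\equiv 0$. Then for an arbitrary $h\in\{0,1\}^Y$ the merge $g\mymerge h$ satisfies $(g\mymerge h)^{-1}(1)=h^{-1}(1)\subseteq Y$. Since $\set(\s)$ is downward closed and $Y\in\set(\s)$, the subset $h^{-1}(1)$ belongs to $\set(\s)$; that is, some $f'\in S(\s)$ has the same set of $1$'s as $g\mymerge h$, so $f'=g\mymerge h\in S(\s)$. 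Thus $g$ strongly witnesses $Y$.

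For $\str(\s)\subseteq\set(\s)$, I would take $Y\in\str(\s)$ and apply the shattering condition to the specific $f\in\{0,1\}^Y$ which is identically $1$. This produces $g\in\{0,1\}^{\dim(\s)-Y}$ with $g\mymerge f\in S(\s)$, whose $1$-preimage $(g\mymerge f)^{-1}(1)$ contains $Y$. Downward closure of $\set(\s)$ then forces $Y\in\set(\s)$.

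There is no real obstacle here: the argument is essentially ``pick the obvious witness and invoke downward closure.'' The only thing that needs care is the direction of the sort convention — an edge is sorted when the smaller vertex (in the product order on $\{0,1\}^X$) carries the bit $1$, so edge-sortedness corresponds to downward closure of $\set(\s)$, not upward closure. Once that is fixed, the choices $g\equiv 0$ (for strong shattering) and $f\equiv 1$ (for extracting membership in $\set(\s)$ from shattering) make both inclusions immediate.
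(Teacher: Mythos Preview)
Your argument is correct and is exactly the direct verification the paper is alluding to when it says the lemma is ``not hard'' to prove from the observation that edge-sortedness is equivalent to downward closure of $\set(\s)$. The paper does not spell out a proof, but your chain $\set(\s)\subseteq\sstr(\s)\subseteq\str(\s)\subseteq\set(\s)$ with the witnesses $g\equiv 0$ and $f\equiv 1$ is the intended one.
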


``Edge-sorting'' a system by applying a sequence of down-shifts is used often in the literature. The theory of sorting teaches us how many shiftings are required to make the system edge-sorted, as indicated in the next known lemma (See \cite{Knuth,Karp}):
\begin{lemma}
Let $A\in K^{m\times n}$ be a 2-dimensional array with keys from a linear ordered set $K$. If the rows of $A$ are sorted, then sorting the columns of $A$ preserves the rows sorted.
\end{lemma}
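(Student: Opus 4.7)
My plan is to prove this well-known sorting-network fact by the standard two-step strategy: reduce to the Boolean case $K=\{0,1\}$ via the classical $0$--$1$ principle, and then verify the Boolean case by a direct counting argument on column $1$-counts.

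For the reduction, the key ingredient is that sorting commutes with thresholding. Given a threshold $t \in K$, I would define $A^{(t)}$ by $A^{(t)}[i][j] = 1$ if $A[i][j] \geq t$ and $0$ otherwise. Thresholding a sorted sequence of keys produces a sorted sequence of bits, so thresholding preserves row-sortedness of $A$. Furthermore, column-sorting followed by thresholding agrees with thresholding followed by column-sorting, because the ascending sort of a finite multiset is uniquely determined by the counts $|\{x : x \geq t\}|$ over all thresholds $t$. Consequently, if the lemma failed for some $A$ over arbitrary $K$, there would be a row $i$ and adjacent columns $j,j+1$ of the output $B$ with $B[i][j]>B[i][j+1]$; choosing any threshold $t$ strictly between these two values would yield a counterexample with entries in $\{0,1\}$ whose input rows are still sorted, contradicting the Boolean case.

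For the Boolean case, let $k_j$ denote the number of $1$s in column $j$ of the input $A$. Row-sortedness (ascending) forces that each $1$ in column $j$ is accompanied by a $1$ in the same row of column $j+1$, and hence $k_1 \leq k_2 \leq \cdots \leq k_n$. After ascending column-sort, each column places its $0$s on top and its $1$s on the bottom, so the output $B$ satisfies $B[i][j] = 1$ exactly when $k_j \geq m - i + 1$. Fixing a row $i$, this condition on $j$ is upward-closed because the sequence $(k_j)$ is non-decreasing; hence row $i$ of $B$ consists of $0$s followed by $1$s and is therefore sorted.

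The one delicate point is pinning down the commutation of sorting with thresholding cleanly, since it is easy to conflate the two sort orientations (rows vs.\ columns, ascending vs.\ descending) that appear in the statement and in the paper's convention for down-shifting. Once that bookkeeping is fixed, both steps are elementary, and the argument matches the textbook development of oblivious sorting referenced in the statement.
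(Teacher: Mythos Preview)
Your argument is correct and is the standard textbook proof (reduction to the Boolean case via the $0$--$1$ principle, then a column-count argument). There is nothing to compare it against, however: the paper does not prove this lemma at all. It is stated there as a known fact from the theory of oblivious sorting, with citations to Knuth and Karp, and is then immediately applied (in its multi-dimensional form) to conclude that a single pass of down-shifts over all coordinates produces an edge-sorted system. So your proposal supplies a proof where the paper simply appeals to the literature; the approach you chose is exactly the one found in those references.
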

This fact is easily generalized to multi-dimensional arrays. In our context, it is manifested by the following lemma.
\begin{lemma}\label{lem:karp fact on systems}
Let $\s$ be a system and let $x,y\in \dim(\s)$, then $\dshift_x(\dshift_y(\s))$ is $y$-sorted.
\end{lemma}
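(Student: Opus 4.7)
The plan is to reduce everything to a single $\{x,y\}$-face and do a direct four-vertex calculation. The case $x=y$ is immediate: $\dshift_x$ is idempotent (after one application every $x$-edge is already sorted, so sorting the same edges again changes nothing), hence $\dshift_x(\dshift_y(\s))=\dshift_x(\s)$ is $x$-sorted, and in particular $y$-sorted.

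Assume now $x\neq y$. The key structural observation is that $\dshift_x$ only permutes bits within $x$-edges and $\dshift_y$ only permutes bits within $y$-edges, and both kinds of edges are contained in the $\{x,y\}$-cubes of $C(\s)$. Since these four-vertex cubes partition $C(\s)$, I only need to check, inside each such cube separately, that both of its $y$-edges remain sorted after the two successive shifts.

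Fix one such $\{x,y\}$-cube with vertices $u,v,w,z$ where $v$ is the $x$-neighbor of $u$ and $w$ is the $y$-neighbor of $u$ (so $z$ is the remaining vertex, and $u\smaller v$, $w\smaller z$, $u\smaller w$, $v\smaller z$). After $\dshift_y(\s)$ the two $y$-edges $\{u,w\}$ and $\{v,z\}$ of this cube are sorted, i.e.\ $b_u\geq b_w$ and $b_v\geq b_z$. Applying $\dshift_x$ then replaces $(b_u,b_v)$ by $(\max(b_u,b_v),\min(b_u,b_v))$ and $(b_w,b_z)$ by $(\max(b_w,b_z),\min(b_w,b_z))$. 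In the resulting system, the $y$-edge $\{u,w\}$ is sorted iff $\max(b_u,b_v)\geq\max(b_w,b_z)$, and the $y$-edge $\{v,z\}$ is sorted iff $\min(b_u,b_v)\geq\min(b_w,b_z)$; both follow at once from $b_u\geq b_w$ and $b_v\geq b_z$ by componentwise monotonicity of $\max$ and $\min$.

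The main obstacle is not mathematical but notational: one must keep straight that ``sorted'' here places the larger bit on the $\smaller$-smaller end of the edge, so that $\dshift_x$ really does assign $\max$ to the smaller vertex and $\min$ to the larger vertex of each $x$-edge, and that flipping $x$ in $u$ (resp.\ $w$) produces a vertex $v$ (resp.\ $z$) that is still $\smaller$-comparable to $w$ (resp.\ to itself's $y$-partner) in the expected way. Once the bookkeeping is fixed, the entire content of the proof collapses to the trivial monotonicity fact $a\geq c$ and $b\geq d$ imply $\max(a,b)\geq\max(c,d)$ and $\min(a,b)\geq\min(c,d)$, which is exactly the $0$/$1$ special case of the two-dimensional sorting fact cited just before the lemma.
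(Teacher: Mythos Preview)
Your proof is correct and follows the same approach the paper gestures at: the paper does not give a detailed argument for this lemma but simply cites the classical two-dimensional sorting fact (sorting rows then columns keeps rows sorted) and asserts that it ``is easily generalized to multi-dimensional arrays.'' Your localization to a single $\{x,y\}$-cube and the explicit $\max/\min$ monotonicity computation is precisely that generalization spelled out, so you are supplying the details the paper omits rather than taking a different route.
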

\begin{definition}
Let $Q$ be a sequence of operators and let $x$ be an object. Let $Q(x)$ denote the result of applying the operators in $Q$ on $x$ one after the other.
\end{definition}
\begin{definition}\label{def:``sequence of''}
The phrase ``$Q$ is a sequence of $W$'' means that $W$ is a set and $Q$ is a sequence of members of $W$ in which each member appears exactly once.
\end{definition}
An immediate corollary of Lemma \ref{lem:karp fact on systems} is the following fact which was observed by several authors: 
\begin{theorem}
Let $\s$ be a system and let $Q$ be a sequence of $\{\dshift_x~:~x\in \dim(\s)\}$. Then $Q(\s)$ is edge-sorted.
\end{theorem}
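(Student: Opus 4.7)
The plan is to proceed by induction on the position within the sequence $Q$. Write $Q=(\dshift_{x_1},\dshift_{x_2},\ldots,\dshift_{x_n})$ where $\{x_1,\ldots,x_n\}=\dim(\s)$, and let $\s_k$ denote the result of applying the first $k$ operators of $Q$ to $\s$. I will establish the invariant that $\s_k$ is $x_i$-sorted for every $i\le k$. Once this is proved for $k=n$, we obtain that $\s_n=Q(\s)$ is $y$-sorted for every $y\in\dim(\s)$, which by definition means $\s_n$ is edge-sorted.

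The base case $k=0$ is vacuous. For the inductive step, assume $\s_k$ is $x_i$-sorted for every $i\le k$. Then $\s_{k+1}=\dshift_{x_{k+1}}(\s_k)$ is trivially $x_{k+1}$-sorted, since by definition of $\dshift_{x_{k+1}}$ every $x_{k+1}$-edge is sorted after the operation. It remains to verify that $\s_{k+1}$ is still $x_i$-sorted for each $i\le k$.

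Here I would invoke the simple observation that $\dshift_{x_i}$ acts as the identity on any system which is already $x_i$-sorted: if every $x_i$-edge is already sorted, then no bits are permuted. Hence $\dshift_{x_i}(\s_k)=\s_k$ for each $i\le k$, so we can write
$$
\s_{k+1}=\dshift_{x_{k+1}}(\s_k)=\dshift_{x_{k+1}}\bigl(\dshift_{x_i}(\s_k)\bigr),
$$
which by Lemma \ref{lem:karp fact on systems} is $x_i$-sorted. This closes the induction and yields the theorem.

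The main subtlety is exactly the idempotence observation above: it converts the two-coordinate statement of Lemma \ref{lem:karp fact on systems} into the multi-coordinate statement required, bypassing the need for a stronger lemma saying that down-shifting in one coordinate preserves sortedness in every other coordinate simultaneously. Everything else is bookkeeping along the sequence $Q$.
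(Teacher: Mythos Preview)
Your argument is correct and is exactly the natural elaboration of what the paper intends: the paper states the theorem as ``an immediate corollary of Lemma~\ref{lem:karp fact on systems}'' without writing out a proof, and your induction together with the idempotence observation $\dshift_{x_i}(\s_k)=\s_k$ is precisely how one makes that corollary explicit. There is no substantive difference in approach.
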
 
%This fact was proved by Bollobas et al in \cite{BR95} and was used by Greco in \cite{Greco98}.\shay{TODO: Check if RONYAI used it}
%\newpage
\subsubsection{Shifting preserves $SE$}
The following theorem was proven by several authors:
\begin{theorem}\label{thm:SE kept under shifting}
Let $\s$ be a system such that $SE(\s)$ and let $x\in \dim(\s)$. Then
	\begin{center}
	$\dshift_x(\s)$ is $SE$
	\end{center}
\end{theorem}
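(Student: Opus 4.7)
The plan is to combine the two recursive characterizations of $SE$ (Theorems \ref{thm:RRR} and \ref{thm:QQQ}) with the key structural fact that down-shifting on $x$ converts the derivatives of $\s$ at $x$ into the restrictions of $\dshift_x(\s)$ at $x$ (Lemma \ref{lem:shifting, restrictions and derivatives}). In other words, Theorem \ref{thm:QQQ} will feed the hypothesis on $\s$ into the premise of Theorem \ref{thm:RRR} for $\dshift_x(\s)$.

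Concretely, let $\{\sysnot{D}',\sysnot{D}''\}$ be the pair of derivatives of $\s$ associated with $x$, and let $\{\sysnot{R}',\sysnot{R}''\}$ be the pair of restrictions of $\dshift_x(\s)$ associated with $x$. First I would observe that $\dshift_x(\s)$ is, by the very definition of the operator, $x$-sorted, since every $x$-edge has been sorted. Second, from $SE(\s)$ and Theorem \ref{thm:QQQ} I would deduce that $SE(\sysnot{D}')$ and $SE(\sysnot{D}'')$. Third, Lemma \ref{lem:shifting, restrictions and derivatives} gives $\{\sysnot{R}',\sysnot{R}''\}=\{\sysnot{D}',\sysnot{D}''\}$, so $SE(\sysnot{R}')$ and $SE(\sysnot{R}'')$.

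It remains to verify the $\oplus_x$-decomposition required by Theorem \ref{thm:RRR} for the system $\dshift_x(\s)$ with its pair of restrictions $\{\sysnot{R}',\sysnot{R}''\}$. This is exactly where the $x$-sortedness of $\dshift_x(\s)$ is used: Lemma \ref{lem:oplus and str for monotone systems} yields
$$\str(\sysnot{R}')\oplus_x \str(\sysnot{R}'')=\str(\dshift_x(\s)).$$
All three conditions in item (2) of Theorem \ref{thm:RRR} are then satisfied for $\dshift_x(\s)$ and its restrictions at $x$, so $SE(\dshift_x(\s))$ follows.

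No step looks like a genuine obstacle, since each of the auxiliary facts has already been established; the only thing requiring care is to keep track of which of the two pairs (derivatives of $\s$ versus restrictions of $\dshift_x(\s)$) is playing which role, and to invoke Theorem \ref{thm:QQQ} in the ``$1\Rightarrow 2$'' direction for $\s$ while invoking Theorem \ref{thm:RRR} in the ``$2\Rightarrow 1$'' direction for $\dshift_x(\s)$. The identification of the two pairs via Lemma \ref{lem:shifting, restrictions and derivatives} is the bridge that makes the argument go through.
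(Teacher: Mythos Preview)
Your proposal is correct and follows essentially the same approach as the paper: use Theorem~\ref{thm:QQQ} on $\s$ to get $SE$ of the derivatives, identify them with the restrictions of $\dshift_x(\s)$ via Lemma~\ref{lem:shifting, restrictions and derivatives}, obtain the $\oplus_x$-decomposition from Lemma~\ref{lem:oplus and str for monotone systems} (using that $\dshift_x(\s)$ is $x$-sorted), and conclude via Theorem~\ref{thm:RRR}. The paper's proof is simply a more compressed version of exactly this argument.
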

\begin{proof}
Let $\sysnot{R}'$ and $\sysnot{R}''$ denote the two restricitons of $\dshift_x(\s)$ associated with $x$.
By Theorem \ref{thm:RRR}, it is enough to prove that
	\begin{enumerate}
	\item{$SE(\sysnot{R}')$ and $SE(\sysnot{R}'')$,}
	\item{$\str(\sysnot{R}')\oplus_x \str(\sysnot{R}'')=\str(\dshift_x(\s))$.}
	\end{enumerate}
Statement 1 easily follows from Lemma \ref{lem:shifting, restrictions and derivatives} and Theorem \ref{thm:QQQ}.
Statement 2 follows from Lemma \ref{lem:oplus and str for monotone systems} 	
\end{proof}

  %Operations on systems

\newpage
\section{Commutativity between Boolean operators}
Henceforth, ``Boolean operators'' means operators of the form $\union{Y}$ and $\inter{Y}$.
This chapter studies the commutativity between Boolean operators and its relations to the $SE$ property.

Since these operators are partial - it is important to note the following:
\begin{itemize}
	\item{Let $f:A\rightarrow B,g:A'\rightarrow B'$ be two operators. The operator $f\circ g$ is {\bf always} defined.\footnote{If $Range(g)\cap Dom(f)=\emptyset$ then $f\circ g$ is the operator which is nowhere defined (i.e $ Dom(f\circ g)=\emptyset$)}
	}
	\item{It is convenient that statements of the form ``$e_1 = e_2$'' are always meaningful, even when $e_1$ or $e_2$ are undefined. To this end the statement 
		``$e_1=e_2$'' is considered to be true either when both sides are defined and equal or when both are undefined. In any other case the statement is considered to be false.}
\end{itemize}

The following lemma summerizes the trivial cases regarding commutativity of the Boolean operators.
\begin{lemma}\label{lem:trivial commuting}
Let $Y,Y'$ be sets and let each of $\alpha,\beta$ be one of the symbols ``$\bigsyscap$'', ``$\bigsyscup$''. 
	\begin{enumerate}
	\item{If $Y\cap Y' \neq \emptyset$  then $\alpha_Y\circ\beta_{Y'} = \beta_{Y'}\circ\alpha_Y$ (both of them are nowhere defined).}
	\item{$\alpha_Y\circ \alpha_{Y'}  = \alpha_{Y'}\circ \alpha_Y$.  Moreover: if $Y\cap Y'=\emptyset$ then $\alpha_Y \circ \alpha_{Y'} = \alpha_{Y\cup Y'}$}
	\end{enumerate}	 
\end{lemma}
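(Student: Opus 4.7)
The plan is to unfold the definitions of $\inter{Y}$ and $\union{Y}$ into their quantifier form and then do straightforward bookkeeping about domains.

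First I would record a semantic reformulation of $\alpha_Y$. A $(\dim(\s)-Y)$-cube of $C(\s)$ is indexed by its constant value $y \in \{0,1\}^Y$ on the frozen coordinates, and normalizing the restriction to this cube yields the system over $\{0,1\}^{\dim(\s)-Y}$ whose vertices are exactly $\{v : v\mymerge y \in S(\s)\}$. Taking union or intersection over all $y$ gives, for every $v \in \{0,1\}^{\dim(\s)-Y}$,
$$v \in S(\union{Y}(\s)) \iff \exists y \in \{0,1\}^Y :\ v\mymerge y \in S(\s),$$
$$v \in S(\inter{Y}(\s)) \iff \forall y \in \{0,1\}^Y :\ v\mymerge y \in S(\s).$$
In particular $\alpha_Y$ is defined exactly when $Y \subseteq \dim(\s)$, and in that case $\dim(\alpha_Y(\s)) = \dim(\s) - Y$.

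For Part 1, suppose $Y \cap Y' \neq \emptyset$. For $\alpha_Y \circ \beta_{Y'}$ to be defined at some $\s$, one needs $Y' \subseteq \dim(\s)$ and also $Y \subseteq \dim(\beta_{Y'}(\s)) = \dim(\s) - Y'$; the latter forces $Y \cap Y' = \emptyset$, a contradiction. Hence $\alpha_Y \circ \beta_{Y'}$ is the nowhere-defined operator, and by the symmetric argument so is $\beta_{Y'} \circ \alpha_Y$. By the convention listed just before the lemma (two undefined sides are equal), this settles Part 1.

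For Part 2, the case $Y \cap Y' \neq \emptyset$ follows from Part 1. Otherwise $Y$ and $Y'$ are disjoint, and for any $\s$ with $Y \cup Y' \subseteq \dim(\s)$, two applications of the characterization above give
$$v \in S(\alpha_Y(\alpha_{Y'}(\s))) \iff Q\,y \in \{0,1\}^Y\ \ Q\,y' \in \{0,1\}^{Y'} :\ v \mymerge y \mymerge y' \in S(\s),$$
where $Q$ is $\exists$ if $\alpha = \union{}$ and $\forall$ if $\alpha = \inter{}$. Since both quantifiers are of the same kind and range over disjoint coordinate sets, they collapse into a single quantifier of that kind over $\{0,1\}^{Y \cup Y'}$, which is exactly $v \in S(\alpha_{Y \cup Y'}(\s))$. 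The right-hand side is symmetric in $Y$ and $Y'$, so $\alpha_Y \circ \alpha_{Y'} = \alpha_{Y'} \circ \alpha_Y = \alpha_{Y \cup Y'}$. The only thing one has to watch is the partial-function convention and that normalization lines up with $\mymerge$; there is no deep step.
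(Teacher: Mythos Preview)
Your proof is correct. The paper does not actually supply a proof of this lemma---it is stated as summarizing the ``trivial cases'' and left without argument---so your quantifier reformulation of $\union{Y}$ and $\inter{Y}$, together with the domain-tracking for Part~1 and the quantifier-collapse for Part~2, cleanly fills in exactly what the paper omits.
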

It is left to study the case of $Y\cap Y'=\emptyset$.
\subsection{Boolean operators commute on $SE$ systems}
Welzl et al have shown in \cite{Welzl} the following theorem:
\begin{theorem}
Let $\s$ be a maximum system and let $x,y\in \dim(\s)$ such that $x\neq y$. Then: $$\inter{\{x\}}\circ\union{\{y\}}(\s)=\union{\{y\}}\circ\inter{\{x\}}(\s)$$  
\end{theorem}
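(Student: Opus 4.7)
The plan is to establish the two inclusions separately. One direction, $\union{\{y\}} \circ \inter{\{x\}}(\s) \subseteq \inter{\{x\}} \circ \union{\{y\}}(\s)$, is essentially a quantifier swap and holds for arbitrary systems. Unfolding the definitions, a vertex $h \in \{0,1\}^{\dim(\s) - \{x,y\}}$ lies in $\union{\{y\}} \circ \inter{\{x\}}(\s)$ exactly when there is a single $b \in \{0,1\}$ such that for both $a \in \{0,1\}$ the extension $h \mymerge (x \mapsto a) \mymerge (y \mapsto b)$ belongs to $S(\s)$; whereas $h \in \inter{\{x\}} \circ \union{\{y\}}(\s)$ asserts only that for each $a$ there is \emph{some} $b_a$ with the analogous property. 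The former condition is strictly stronger, which yields the easy inclusion.

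For the reverse inclusion I will use that every maximum system is SE and that SE is preserved under restrictions (Theorem~\ref{thm:preservation under restrictions}). Fix $h$ in $\inter{\{x\}} \circ \union{\{y\}}(\s)$ and let $T_h$ denote the restriction of $\s$ to the unique $\{x,y\}$-cube whose coordinates outside $\{x,y\}$ all agree with $h$. Since $\s$ is maximum it is SE, and by the preservation theorem $T_h$ is also SE; moreover $\dim(T_h) = \{x,y\}$, so $T_h$ is two-dimensional.

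Reading the definitions once more, the membership $h \in \inter{\{x\}} \circ \union{\{y\}}(\s)$ is equivalent to $\{x\} \in \str(T_h)$, and $h \in \union{\{y\}} \circ \inter{\{x\}}(\s)$ is equivalent to $\{x\} \in \sstr(T_h)$. Since $T_h$ is SE we have $\sstr(T_h) = \str(T_h)$, and the desired inclusion follows at once. The only real obstacle is bookkeeping: carefully tracking the dimensions at each step and translating cleanly between the language of the composed Boolean operators and the shatters/strongly-shatters predicates on the two-dimensional restriction $T_h$. Once this dictionary is in place, the conclusion rests on the one-line fact that $\sstr(T_h) = \str(T_h)$ for every fiber $T_h$.
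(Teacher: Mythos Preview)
Your proof is correct and shares the same overall structure as the paper's argument for Lemma~\ref{lem:elementary commuting} (the stronger statement that the paper actually proves): both localize to the two-dimensional $\{x,y\}$-cube through each $h$, invoke Theorem~\ref{thm:preservation under restrictions} to conclude that the restriction $T_h$ is SE, and then settle the claim fiber by fiber. The difference lies in how the two-dimensional base case is handled. The paper dispatches it by brute-force inspection of all sixteen $\{x,y\}$-systems, whereas you observe directly that the two operator conditions at $h$ translate to $\{x\}\in\str(T_h)$ and $\{x\}\in\sstr(T_h)$, which coincide by the very definition of SE. Your route is cleaner and in fact anticipates the paper's later Theorem~\ref{thm:str and sstr characterization by boolean operators}, which records precisely this dictionary between composed Boolean operators and the two shattering predicates and then uses it to derive Theorem~\ref{thm:bool commuting} in one line.
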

We show the following stronger lemma:
\begin{lemma}\label{lem:elementary commuting}
Let $~\s$ be a system and let $x,y\in \dim(\s)$ then:
	\begin{enumerate}
	\item{$\inter{\{x\}}\circ\union{\{y\}}(\s)\supseteq\union{\{y\}}\circ\inter{\{x\}}(\s)$}
	\item{$SE(\s)\implies\inter{\{x\}}\circ\union{\{y\}}(\s)=\union{\{y\}}\circ\inter{\{x\}}(\s)$}
	\end{enumerate}
\end{lemma}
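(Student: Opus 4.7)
The plan is to reduce commutativity to the coincidence of $\str$ and $\sstr$ on a carefully chosen restriction of $\s$. If $x = y$, then by Lemma~\ref{lem:trivial commuting} both sides are nowhere defined and equality is vacuous, so assume $x \neq y$. Fix an arbitrary $g \in \{0,1\}^{\dim(\s)-\{x,y\}}$ and unwind the definitions of $\union{\{y\}}$, $\inter{\{x\}}$, and composition. A direct computation yields
\[
g \in S\bigl(\inter{\{x\}} \circ \union{\{y\}}(\s)\bigr) \iff (\forall a \in \{0,1\})(\exists b \in \{0,1\}):\; g \cup \{(x,a),(y,b)\} \in S(\s),
\]
\[
g \in S\bigl(\union{\{y\}} \circ \inter{\{x\}}(\s)\bigr) \iff (\exists b \in \{0,1\})(\forall a \in \{0,1\}):\; g \cup \{(x,a),(y,b)\} \in S(\s),
\]
where functions are viewed as sets of input-output pairs.

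Conclusion (1) is then immediate from the tautology $(\exists b)(\forall a)\,\phi \Rightarrow (\forall a)(\exists b)\,\phi$. For Conclusion (2), introduce the $\{x,y\}$-cube
\[
C_g \defeq \{ v \in C(\s) \;:\; v \text{ agrees with } g \text{ on } \dim(\s)-\{x,y\} \}
\]
and consider the restriction $\restr{\s}{C_g}$, a system of dimension $\{x,y\}$. Under the normalization, the right-hand condition above is precisely the statement that $\{x\}$ is shattered by $\restr{\s}{C_g}$, while the left-hand condition is precisely the statement that $\{x\}$ is strongly shattered by $\restr{\s}{C_g}$. Since $\s$ is SE, Theorem~\ref{thm:preservation under restrictions} gives $SE(\restr{\s}{C_g})$, hence $\str(\restr{\s}{C_g}) = \sstr(\restr{\s}{C_g})$, and the two conditions on $g$ coincide for every such $g$.

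The main obstacle is purely bookkeeping: one must keep careful track of how the vertex set shrinks under the compositions (first dropping $y$, then $x$) when unwinding the first pair of equivalences, and be comfortable that the normalization in the definition of $\restr{\cdot}{C}$ matches the vertex set on which $\str$ and $\sstr$ are being compared. Once this is pinned down, the rest is a one-line appeal to the definition of SE together with its preservation under cube restrictions. Conceptually, the argument makes precise the intuition that $\inter{\{x\}} \circ \union{\{y\}}$ and $\union{\{y\}} \circ \inter{\{x\}}$ encode ``shatters $\{x\}$'' and ``strongly shatters $\{x\}$'' inside each $\{x,y\}$-cube, so they can differ only where SE fails inside some such cube.
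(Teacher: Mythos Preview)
Your proof is correct. Structurally it follows the same reduction as the paper: localize the question to each $\{x,y\}$-cube $C_g$ of $C(\s)$, then invoke Theorem~\ref{thm:preservation under restrictions} to know that $\restr{\s}{C_g}$ is SE whenever $\s$ is. The paper carries out this reduction by writing $f(\restr{\s}{C}) = \restr{f(\s)}{\{v\}}$ for $f\in\{\inter{\{x\}}\circ\union{\{y\}},\ \union{\{y\}}\circ\inter{\{x\}}\}$ and then disposing of the two-dimensional base case by brute force, checking all $16$ systems on $\{x,y\}$.

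Where your argument differs is in the treatment of the two-dimensional piece: rather than enumerate, you observe that membership of $g$ in the two composites is literally the assertion $\{x\}\in\str(\restr{\s}{C_g})$ and $\{x\}\in\sstr(\restr{\s}{C_g})$, respectively. This immediately gives Conclusion~(1) via the quantifier tautology $(\exists b)(\forall a)\Rightarrow(\forall a)(\exists b)$ (which is exactly the definitional inclusion $\sstr\subseteq\str$), and Conclusion~(2) directly from $\str(\restr{\s}{C_g})=\sstr(\restr{\s}{C_g})$. This is cleaner than the case check and makes transparent \emph{why} SE forces commutation: the two composites encode shattering and strong shattering of $\{x\}$ inside each $\{x,y\}$-cube. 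One small remark on exposition: the phrase ``right-hand condition \ldots\ left-hand condition'' is ambiguous; say ``the first displayed condition'' and ``the second'' (or simply name them $(\forall a)(\exists b)$ and $(\exists b)(\forall a)$) to avoid confusion.
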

Remark: Statement 1 was established by Welzl et al in \cite{Welzl}.
\begin{proof}
We consider two cases.

Case 1: $\dim(\s)=\{x,y\}$. There are exactly $16$ systems under this case and it is easy to verify that the lemma holds for all of them.

Case 2: $\lvert \dim(\s)\rvert>2$. Let $f=\inter{\{x\}}\circ\union{\{y\}}$, $g=\union{\{y\}}\circ\inter{\{x\}}$ and let $v\in \{0,1\}^{\dim(\s)-\{x,y\}}$.
It is easy to see that there exist an $\{x,y\}$-cube $C\subseteq C(\s)$ such that:
	$$
	f(\restr{\s}{C})=\restr{f(\s)}{\{v\}},
	$$
	$$
	g(\restr{\s}{C})=\restr{g(\s)}{\{v\}}
	$$
Thus, the lemma follows by Case 1 and Theorem \ref{thm:preservation under restrictions}.
\end{proof}
The following lemma is an easy corollary of the first conclusion of Lemma \ref{lem:elementary commuting}:
\begin{lemma}\label{lem:boolean hrcy}
Let $Q$ be a sequence of operators from $\{\inter{\{x\}},\union{\{x\}}~\vert~x\in \dim(\s)\}$ such that $Q(\s)$ is defined. Let $\overline{Q}$ be a permutation of $Q$ in which all the unions appear at the beginning and let $\underline{Q}$ be a permutation of $Q$ in which all the intersections appear at the beginning. Then:
	$$ 
	\underline{Q}(\s)\subseteq Q(\s)\subseteq\overline{Q}(\s)
	$$
\end{lemma}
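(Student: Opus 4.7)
The plan is to reduce both containments to a single ``adjacent swap'' argument: I want to transform $Q$ into $\overline{Q}$ (and, symmetrically, $\underline{Q}$ into $Q$) by a finite sequence of transpositions of adjacent entries in the operator sequence, and to show that each transposition either leaves the final output fixed or enlarges it. Before starting, two preliminary observations. First, since $Q(\s)$ is assumed defined, Lemma \ref{lem:trivial commuting} Statement 1 forces the singletons $\dim(q_i)$ to be pairwise disjoint, so any permutation of $Q$ is applied to systems whose dimension always contains the required $x$, and definedness is preserved throughout every bubble-sort step. Second, both $\inter{Y}$ and $\union{Y}$ are monotone: if $\dim(\s)=\dim(\s')$ and $S(\s)\subseteq S(\s')$, then $\inter{Y}(\s)\subseteq\inter{Y}(\s')$ and $\union{Y}(\s)\subseteq\union{Y}(\s')$. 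This is immediate from the definitions as $\bigsyscap$ and $\bigsyscup$ of cube restrictions, together with the obvious monotonicity of $\restr{\cdot}{C}$ in its first argument.

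The heart of the argument is the following swap claim. Suppose $Q'$ is obtained from $Q$ by swapping adjacent entries $q_i=\inter{\{y\}}$ and $q_{i+1}=\union{\{x\}}$; then $Q(\s)\subseteq Q'(\s)$. To see this, let $\s_{i-1}$ denote the intermediate system obtained by applying the common prefix of length $i-1$. Statement 1 of Lemma \ref{lem:elementary commuting} applied to $\s_{i-1}$ gives
$$\inter{\{y\}}\circ\union{\{x\}}(\s_{i-1}) \ \supseteq\ \union{\{x\}}\circ\inter{\{y\}}(\s_{i-1}),$$
which is exactly the statement that after $i+1$ steps the intermediate system produced by $Q'$ contains the one produced by $Q$. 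Applying the common remaining operators $q_{i+2},\ldots,q_n$ to both sides and invoking monotonicity of each of them preserves the containment, yielding $Q(\s)\subseteq Q'(\s)$. Swaps of two adjacent same-type operators leave the result unchanged by Lemma \ref{lem:trivial commuting} Statement 2, so they cost nothing.

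To conclude, I bubble-sort $Q$ into $\overline{Q}$: while the current sequence contains an adjacent (intersection, union) pair, swap it; the number of (intersection, union) inversions strictly decreases, so the process terminates. Each such swap only enlarges the final output by the claim above, so the sequence we reach (with every union preceding every intersection) satisfies this containment, and by Lemma \ref{lem:trivial commuting} Statement 2 it produces the same system as $\overline{Q}$ regardless of the internal order within each block. This gives $Q(\s)\subseteq\overline{Q}(\s)$. For the other containment, I run the same procedure starting from $\underline{Q}$ (all intersections first, then all unions) and migrate each union leftward past intersections until the sequence matches $Q$; every migration step is again an (intersection, union) swap covered by the claim, so each step enlarges the result, giving $\underline{Q}(\s)\subseteq Q(\s)$. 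The only real obstacle is book-keeping of the bubble sort and definedness, both of which are handled by the opening remarks; the substantive mathematical content is concentrated in the single invocation of Lemma \ref{lem:elementary commuting} Statement 1.
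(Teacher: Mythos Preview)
Your proof is correct and is exactly the natural elaboration of what the paper has in mind: the paper gives no proof at all, merely stating that the lemma ``is an easy corollary of the first conclusion of Lemma~\ref{lem:elementary commuting}''. Your bubble-sort argument---using monotonicity of $\inter{Y},\union{Y}$ to push the single adjacent-swap inequality of Lemma~\ref{lem:elementary commuting}(1) through the remaining suffix, and Lemma~\ref{lem:trivial commuting}(2) to identify the sorted sequence with $\overline{Q}$ (resp.\ $\underline{Q}$) regardless of within-block order---is precisely the standard way to make this corollary rigorous.
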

\begin{definition}
Let $Q$ be a sequence of operators and let $x$ be an object. We say that $Q$ {\it commutes} on $x$ if for every $Q'$, a permutation of $Q$:
	$$
	Q(x)=Q'(x)
	$$
\end{definition}
From \ref{thm:SE kept under bool}, \ref{lem:trivial commuting} and the second conclusion of Lemma \ref{lem:elementary commuting} we get:
\begin{lemma}\label{lem:strong commuting}
If $~\s$ is an $SE$ system and $Q$ is a sequence of Boolean operators, then $Q$ commutes on $\s$.
\end{lemma}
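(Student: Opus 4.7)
The plan is to reduce the statement to the single-variable commutativity already established by Lemma~\ref{lem:elementary commuting}(2), combining a bubble-sort argument with the fact that the $SE$ property is preserved by Boolean operators (Theorem~\ref{thm:SE kept under bool}).

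First I would observe that if $Q(\s)$ is defined, then the dimension sets of the operators appearing in $Q$ must be pairwise disjoint subsets of $\dim(\s)$, which is invariant under permutation, so $Q'(\s)$ is also defined for every permutation $Q'$ of $Q$. Moreover, by Theorem~\ref{thm:SE kept under bool}, after applying any initial segment of Boolean operators to the $SE$ system $\s$, the resulting system remains $SE$; this invariant will be used implicitly throughout the proof.

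Next, using Lemma~\ref{lem:trivial commuting}(2), I would expand each multi-variable operator $\alpha_Y$ appearing in $Q$ into a composition $\alpha_{\{y_1\}}\circ\cdots\circ\alpha_{\{y_k\}}$ of single-variable operators of the same type (where $Y=\{y_1,\ldots,y_k\}$). Since same-type operators always commute, the order chosen within each expansion is irrelevant. This reduces the problem to showing that every permutation of a sequence of single-variable Boolean operators, all with distinct underlying variables, produces the same result when applied to $\s$. Because every permutation of a finite sequence decomposes into a chain of adjacent transpositions, it suffices to verify that swapping two adjacent single-variable operators $p,q$ in such a sequence preserves the outcome. Let $\s^{\ast}$ denote the system obtained by applying all operators that precede the pair $p,q$; by Theorem~\ref{thm:SE kept under bool}, $\s^{\ast}$ is $SE$. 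If $p$ and $q$ have the same type, Lemma~\ref{lem:trivial commuting}(2) gives $p\circ q(\s^{\ast})=q\circ p(\s^{\ast})$; if they have opposite types and distinct variables, the $SE$ hypothesis on $\s^{\ast}$ lets us invoke Lemma~\ref{lem:elementary commuting}(2). Applying the remaining operators identically to both sides then yields the same final system, and the composition of these adjacent swaps realizes the desired permutation.

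The hard part is not mathematical depth but the bookkeeping: one must maintain the $SE$ invariant at every intermediate stage and verify that the expansion of multi-variable operators into single-variable blocks is compatible with the permutation structure of $Q$, so that a bubble-sort argument at the single-variable level lifts cleanly back to the original sequence. Once this is in place, Lemmas~\ref{lem:trivial commuting} and \ref{lem:elementary commuting}(2), together with Theorem~\ref{thm:SE kept under bool}, handle all cases.
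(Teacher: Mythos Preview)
Your proposal is correct and follows essentially the same approach as the paper: the paper's proof consists of a single sentence citing Theorem~\ref{thm:SE kept under bool}, Lemma~\ref{lem:trivial commuting}, and Lemma~\ref{lem:elementary commuting}(2), and your write-up supplies exactly the bubble-sort/adjacent-transposition argument and the $SE$-invariance bookkeeping that the paper leaves implicit. One small point worth making explicit is the undefined case: your observation that definedness of $Q(\s)$ forces pairwise-disjoint dimension sets inside $\dim(\s)$ is symmetric in $Q$ and $Q'$, so either all permutations are defined or all are undefined, and in the latter case equality holds by the paper's convention on ``$e_1=e_2$''.
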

Lemma \ref{lem:elementary commuting} by itself does not immediately imply \ref{lem:strong commuting}. Namely, having $\inter{\{x\}}$ and $\union{\{y\}}$ commute on $\s$ for each pair $x,y\in \dim(\s)$ does not imply that every sequence $Q$ of such operators commutes on $\s$. (See Theorem \ref{thm:refined bool commuting})\\

\subsection{A characterization of {\it Shattering} and {\it Strong-Shattering}}
To complete the picture, what is left is to study the other direction: when does commutativity of Boolean operators on $\s$ imply $SE(\s)$.  
To this end, we present the following characterization of shattering and strong-shattering:
\begin{theorem}\label{thm:str and sstr characterization by boolean operators}
Let $\s$ be a system and let $Y\subseteq X=\dim(\s)$. Then:
\begin{enumerate}
	\item{$\s$ shatters $Y$ $\iff$ $\inter{Y}\circ\union{X-Y}(\s)=\mathcal{K}_1$}
	\item{$\s$ strongly shatters $Y$ $\iff$ $\union{X-Y}\circ\inter{Y}(\s)=\mathcal{K}_1$}
\end{enumerate}
\end{theorem}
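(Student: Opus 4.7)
The plan is to unpack the two composite operators directly in terms of the standard quantifiers, making manifest the guiding intuition that $\union{Z}$ implements ``$\exists h \in \{0,1\}^Z$'' while $\inter{Z}$ implements ``$\forall h \in \{0,1\}^Z$''. Under this correspondence the two parts of the theorem become literal rephrasings of the definitions of shatters and strongly shatters, and the fact that they differ only in the order of $\inter{Y}$ and $\union{X-Y}$ mirrors exactly the fact that the two definitions differ only in the order of the quantifiers — as emphasized earlier in the paper.

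For Part 1, I would first compute $\union{X-Y}(\s)$. This operator ranges over cubes $C$ with $\dim(C) = X - (X-Y) = Y$, i.e.\ those of the form $C_g = \{h \in \{0,1\}^X : h\restriction_{X-Y} = g\}$ for $g \in \{0,1\}^{X-Y}$. Each normalized restriction $\restr{\s}{C_g}$ is a system of dimension $Y$ whose $S$-set is $\{f\restriction_Y : f \in \s,\ f\restriction_{X-Y} = g\}$, so their $\syscup$ is a system of dimension $Y$ whose $S$-set is $\{f\restriction_Y : f \in \s\}$. Now apply $\inter{Y}$ to this system. Its cubes have dimension $Y - Y = \emptyset$ and so are singletons $\{v\}$ with $v \in \{0,1\}^Y$; each singleton restriction is $\mathcal{K}_1$ or $\mathcal{K}_0$ depending on whether $v$ is in the $S$-set. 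Since $\mathcal{K}_1$ is the identity and $\mathcal{K}_0$ the annihilator for $\syscap$, the composite equals $\mathcal{K}_1$ precisely when every $v \in \{0,1\}^Y$ arises as some $f\restriction_Y$ with $f \in \s$, which says $\forall v \in \{0,1\}^Y,\ \exists g \in \{0,1\}^{X-Y}: g \mymerge v \in \s$ — i.e.\ $\s$ shatters $Y$.

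For Part 2 I would perform the analogous unpacking with the two operators reversed. First, $\inter{Y}(\s)$ uses cubes $C_v = \{h : h\restriction_Y = v\}$ indexed by $v \in \{0,1\}^Y$, each normalized restriction carrying $\{f\restriction_{X-Y} : f \in \s,\ f\restriction_Y = v\}$; taking $\syscap$ yields a system of dimension $X - Y$ whose $S$-set is $\{g \in \{0,1\}^{X-Y} : \forall v \in \{0,1\}^Y,\ v \mymerge g \in \s\}$. Applying $\union{X-Y}$ to a system of dimension $X - Y$ again uses singleton cubes, so the union equals $\mathcal{K}_1$ iff the above $S$-set is nonempty, i.e.\ $\exists g \in \{0,1\}^{X-Y},\ \forall v \in \{0,1\}^Y : v \mymerge g \in \s$ — the definition of $\s$ strongly shattering $Y$.

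There is no substantive obstacle: the argument is essentially bookkeeping with cubes, normalizations, and the identity/annihilator roles of $\mathcal{K}_1$ and $\mathcal{K}_0$. The only care required is in tracking dimensions across the two compositions and in observing that the final composite has empty dimension, which is what makes the test ``$= \mathcal{K}_1$'' cleanly encode the truth of the quantified formula in each case.
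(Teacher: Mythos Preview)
Your proposal is correct and follows essentially the same route as the paper: the paper factors the argument into two small lemmas---one saying $\union{\dim(\s)}(\s)=\mathcal{K}_1$ iff $S(\s)\neq\emptyset$ and $\inter{\dim(\s)}(\s)=\mathcal{K}_1$ iff $S(\s)=C(\s)$, the other computing the $S$-set of the inner operator exactly as you do---and your inline unpacking is just these two lemmas composed. The organizing ``$\union{}$ is $\exists$, $\inter{}$ is $\forall$'' slogan you use is precisely the content of those lemmas.
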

Recall that $\mathcal{K}_1$ is the non-empty $\emptyset$-system %and $\mathcal{K}_0$ is the empty $\emptyset$-system.

Theorem \ref{thm:str and sstr characterization by boolean operators} follows from the next two straightforward lemmas.
\begin{lemma}\label{bool char sstr sstr helper 1}
For every system $\s$, the following statements hold:
\begin{enumerate}
	\item{$\union{\dim(\s)}(\s)=\mathcal{K}_1\iff S(\s)\neq\emptyset$}
	\item{$\inter{\dim(\s)}(\s)=\mathcal{K}_1\iff S(\s)=C(\s)$}
\end{enumerate}
\end{lemma}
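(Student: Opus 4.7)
The plan is to unfold the definitions of $\union{\dim(\s)}$ and $\inter{\dim(\s)}$ and observe that both reduce to a Boolean operation indexed by the singletons of $C(\s)$. The key observation is that a cube $C$ with $\dim(C)=\dim(\s)-\dim(\s)=\emptyset$ is, by definition, an equivalence class of functions agreeing on all of $\dim(\s)$, hence a singleton $\{v\}$ for some vertex $v \in C(\s)$; moreover, these singleton cubes partition $C(\s)$.

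Next I would compute $\restr{\s}{\{v\}}$ for an arbitrary vertex $v$. The normalizing map sends the unique $f$ defined on $\{v\}$ to $\restr{f}{\emptyset}$, the unique element of $\{0,1\}^\emptyset$; so $\restr{\s}{\{v\}}$ has dimension $\emptyset$ and is therefore either $\mathcal{K}_0$ or $\mathcal{K}_1$. A quick unwinding of the definition of restriction shows $\restr{\s}{\{v\}}=\mathcal{K}_1$ iff $v \in S(\s)$, and $\restr{\s}{\{v\}}=\mathcal{K}_0$ otherwise.

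Given this, both items drop out of the trivial Boolean identities $\mathcal{K}_0 \syscup \mathcal{K}_1 = \mathcal{K}_1$ and $\mathcal{K}_0 \syscap \mathcal{K}_1 = \mathcal{K}_0$. The big union $\union{\dim(\s)}(\s)$ equals $\mathcal{K}_1$ iff at least one term in the union is $\mathcal{K}_1$, i.e.\ at least one $v$ lies in $S(\s)$, which is exactly $S(\s) \neq \emptyset$. Dually, the big intersection $\inter{\dim(\s)}(\s)$ equals $\mathcal{K}_1$ iff every term is $\mathcal{K}_1$, i.e.\ every $v \in C(\s)$ lies in $S(\s)$, which is exactly $S(\s)=C(\s)$.

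I do not anticipate a serious obstacle here: the statement is essentially a translation of definitions. The only subtlety worth watching is the degenerate case $\dim(\s)=\emptyset$, where there is only one singleton cube (namely $C(\s)$ itself) and both items collapse to the tautology that, for $\s \in \{\mathcal{K}_0,\mathcal{K}_1\}$, having $\s=\mathcal{K}_1$ coincides both with $S(\s)\neq\emptyset$ and with $S(\s)=C(\s)$.
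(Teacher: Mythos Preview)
Your proposal is correct and is precisely the definition-unfolding the paper has in mind; the paper does not spell out a proof of this lemma at all, merely calling it ``straightforward'' before using it. Your treatment of the degenerate case $\dim(\s)=\emptyset$ is a nice touch that the paper omits entirely.
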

\begin{lemma}\label{bool char sstr sstr helper 2}
For every system $\s$ and $Y\subseteq \dim(\s)$:
\begin{enumerate}
	\item{$Y\in \str(\s)\iff S(\union{\dim(\s)-Y}(\s))=C(\union{\dim(\s)-Y}(\s))$}
	\item{$Y\in \sstr(\s)\iff S(\inter{Y}(\s))\neq\emptyset$}
\end{enumerate}
\end{lemma}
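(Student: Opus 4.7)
The plan is to reduce both equivalences to a direct reading of the quantifier structure defining $\str$ and $\sstr$ against explicit fibrewise descriptions of the operators $\inter{Y}$ and $\union{X-Y}$. Setting $X=\dim(\s)$, the first step is to index cubes: each $(X-Y)$-cube of $C(\s)$ has the form $C_f=\{v\in\{0,1\}^X:\restr{v}{Y}=f\}$ for a unique $f\in\{0,1\}^Y$, and each $Y$-cube has the form $C^g=\{v\in\{0,1\}^X:\restr{v}{X-Y}=g\}$ for a unique $g\in\{0,1\}^{X-Y}$. Unwinding the normalizing map $\normalizealone$ then identifies $S(\restr{\s}{C_f})$ with $\{g\in\{0,1\}^{X-Y}:g\mymerge f\in S(\s)\}$ and $S(\restr{\s}{C^g})$ with $\{f\in\{0,1\}^Y:g\mymerge f\in S(\s)\}$.

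For Statement~2, substituting this into Definition~\ref{def:bool ops} gives
$$S\bigl(\inter{Y}(\s)\bigr)=\bigcap_{f\in\{0,1\}^Y}\{g\in\{0,1\}^{X-Y}:g\mymerge f\in S(\s)\},$$
so the set is nonempty iff there exists a $g$ lying in every fibre, i.e.\ with $g\mymerge f\in S(\s)$ for every $f$ --- exactly the $\exists g\,\forall f$ pattern defining $Y\in\sstr(\s)$. For Statement~1 the argument is dual:
$$S\bigl(\union{X-Y}(\s)\bigr)=\bigcup_{g\in\{0,1\}^{X-Y}}\{f\in\{0,1\}^Y:g\mymerge f\in S(\s)\},$$
and equality with the ambient vertex set $C(\union{X-Y}(\s))=\{0,1\}^Y$ is just the statement that every $f$ admits some $g$ with $g\mymerge f\in S(\s)$ --- the $\forall f\,\exists g$ pattern defining $Y\in\str(\s)$.

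I do not expect significant obstacles; the only step meriting real care is the normalization bookkeeping, i.e.\ checking that after applying $\normalizealone$ to each $(X-Y)$-cube (respectively $Y$-cube) the resulting systems really share the common vertex set $\{0,1\}^{X-Y}$ (respectively $\{0,1\}^Y$), so that the Boolean intersection and union of Definition~\ref{def:bool ops} match the fibrewise operations written above. Once that identification is in place, both equivalences are a transparent reading of the quantifier patterns and the lemma follows.
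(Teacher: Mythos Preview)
Your proposal is correct and is precisely the definition-unwinding the paper has in mind: the paper gives no explicit proof of this lemma, calling it ``straightforward'' (together with Lemma~\ref{bool char sstr sstr helper 1}) in the text preceding it. Your indexing of $(X-Y)$-cubes by $f\in\{0,1\}^Y$ and $Y$-cubes by $g\in\{0,1\}^{X-Y}$, followed by matching the resulting fibrewise intersection/union against the $\exists\forall$ and $\forall\exists$ quantifier patterns, is exactly the intended argument.
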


\subsection{A characterization of $SE$}
Our characterization in Theorem \ref{thm:str and sstr characterization by boolean operators} combined with Lemma \ref{lem:strong commuting} gives a new and simple proof of the following theorem.
\begin{theorem}[\cite{Dress2}]\label{thm:bool commuting}
The following statements are equivalent for any system $\s$:
	\begin{enumerate}
	\item{$SE(\s)$}
	\item{$\forall Y',Y''\subseteq \dim(\s)$: $\inter{Y'},\union{Y''}$ commute on $\s$}
	\end{enumerate}
\end{theorem}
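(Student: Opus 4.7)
The plan is to establish the two directions separately, with both being rather direct given the machinery already in place.

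For the direction $1 \Rightarrow 2$, I would simply invoke Lemma \ref{lem:strong commuting}. Taking the two-element sequence $Q = (\inter{Y'}, \union{Y''})$ of Boolean operators and applying that lemma, we conclude that $Q$ commutes on $\s$, which is exactly the claim that $\inter{Y'}$ and $\union{Y''}$ commute on $\s$.

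For the direction $2 \Rightarrow 1$, the plan is to use the fresh characterizations of shattering and strong-shattering from Theorem \ref{thm:str and sstr characterization by boolean operators}. Write $X = \dim(\s)$ and fix an arbitrary $Y \subseteq X$. By that theorem,
\[
Y \in \str(\s) \iff \inter{Y}\circ\union{X-Y}(\s) = \mathcal{K}_1,
\]
\[
Y \in \sstr(\s) \iff \union{X-Y}\circ\inter{Y}(\s) = \mathcal{K}_1.
\]
Now instantiate assumption $2$ with $Y' = Y$ and $Y'' = X - Y$: the operators $\inter{Y}$ and $\union{X-Y}$ commute on $\s$, so the two right-hand sides above coincide. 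Hence $Y \in \str(\s) \iff Y \in \sstr(\s)$. Since $Y$ was arbitrary, $\str(\s) = \sstr(\s)$, i.e. $SE(\s)$.

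There really is no hard part here; the whole theorem is a packaging of earlier results. The only subtlety worth flagging is that in the conversion ``$Y \in \str(\s) \iff Y \in \sstr(\s)$'' we rely on the convention that equations between possibly-partial operator expressions are always meaningful: both $\inter{Y}\circ\union{X-Y}(\s)$ and $\union{X-Y}\circ\inter{Y}(\s)$ are defined (since $Y, X-Y \subseteq \dim(\s)$), so the commutativity hypothesis is a genuine equality of systems, and comparing each side to $\mathcal{K}_1$ causes no definedness issues.
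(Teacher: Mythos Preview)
Your proof is correct and follows exactly the approach the paper indicates: the forward direction is Lemma~\ref{lem:strong commuting}, and the backward direction combines assumption~$2$ (instantiated with $Y'=Y$, $Y''=X-Y$) with the characterization in Theorem~\ref{thm:str and sstr characterization by boolean operators} to obtain $\str(\s)=\sstr(\s)$. The paper itself only states that these two ingredients yield the result without writing out the details, so your argument is a faithful expansion of the intended proof.
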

  %Examples and applications

\newpage
\section{Commutativity between Shifting operators}
\subsection{Shifting operators commute on $SE$ systems}
\begin{lemma}\label{lem:elementary commuting - shifting}
Let $~\s$ be a $SE$ system and let $x,y\in \dim(\s)$ such that $x\neq y$. Then	$\dshift_x$ and $\dshift_x$ commute on $\s$. 
\end{lemma}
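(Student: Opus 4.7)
The plan is to reduce the claimed commutativity on an SE system to the commutativity of the Boolean operators $\inter{\{x\}},\union{\{y\}}$ (and the symmetric pair) established in Lemma \ref{lem:elementary commuting}. First I would observe that each of $\dshift_x$ and $\dshift_y$ acts locally on the $\{x,y\}$-cubes of $C(\s)$: an $x$-edge or a $y$-edge is entirely contained in a unique such cube, so neither shift mixes information across different $\{x,y\}$-cubes. Hence $\dshift_x\dshift_y(\s)=\dshift_y\dshift_x(\s)$ holds if and only if the two compositions produce the same bit on each of the four vertices of every $\{x,y\}$-cube of $C(\s)$.

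Next I would fix such a cube, denote its vertices by $v^{00},v^{10},v^{01},v^{11}$ (the superscripts giving the $x,y$-coordinates), and let $a,b,c,d$ denote the $\s$-bits at these vertices. A routine expansion of the definition of down-shifting shows that both compositions assign $a\lor b\lor c\lor d$ to $v^{00}$ and $a\land b\land c\land d$ to $v^{11}$, so those two ``corners'' always agree. At $v^{10}$ the two compositions yield $(a\lor c)\land(b\lor d)$ and $(a\land b)\lor(c\land d)$ respectively, and at $v^{01}$ they yield $(a\land c)\lor(b\land d)$ and $(a\lor b)\land(c\lor d)$ respectively.

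The key observation I would then point out is that these four mixed-vertex expressions are exactly the bits of $\inter{\{x\}}\circ\union{\{y\}}(\s)$, $\union{\{y\}}\circ\inter{\{x\}}(\s)$, $\union{\{x\}}\circ\inter{\{y\}}(\s)$, and $\inter{\{y\}}\circ\union{\{x\}}(\s)$, evaluated at the vertex of $\{0,1\}^{\dim(\s)-\{x,y\}}$ indexing the $\{x,y\}$-cube. Consequently $\dshift_x\dshift_y(\s)=\dshift_y\dshift_x(\s)$ is equivalent to the pair of identities
$$\inter{\{x\}}\circ\union{\{y\}}(\s)=\union{\{y\}}\circ\inter{\{x\}}(\s)\quad\mbox{and}\quad\inter{\{y\}}\circ\union{\{x\}}(\s)=\union{\{x\}}\circ\inter{\{y\}}(\s),$$
both of which hold for SE systems by the second conclusion of Lemma \ref{lem:elementary commuting}. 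The only real obstacle is the bit-level bookkeeping of the preceding paragraph; once one recognises that iterated down-shifting along a two-dimensional cube performs exactly the alternating union/intersection computation encoded by the operators $\inter{\{\cdot\}}$ and $\union{\{\cdot\}}$, the conclusion is immediate.
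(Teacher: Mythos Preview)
Your argument is correct. The locality reduction to $\{x,y\}$-cubes is the same opening move the paper makes, but from there the paper simply checks all $16$ two-dimensional systems by brute force and then invokes Theorem~\ref{thm:preservation under restrictions} (restrictions preserve SE) to lift the result. You instead carry out the bit computation explicitly and recognise the two ``mixed'' outputs as precisely the values of $\inter{\{x\}}\!\circ\!\union{\{y\}}(\s)$ versus $\union{\{y\}}\!\circ\!\inter{\{x\}}(\s)$ (and the symmetric pair) at the base vertex, thereby reducing directly to Lemma~\ref{lem:elementary commuting} applied to $\s$ itself. This is a genuinely different route: it trades the $16$-case check here for the one already done inside Lemma~\ref{lem:elementary commuting}, and---more interestingly---it makes visible at the bit level that commutativity of down-shifts \emph{is} commutativity of the Boolean operators, a connection the paper only surfaces later through the heavier machinery of Lemma~\ref{lem:shifting characterization helper} and Theorem~\ref{thm:str,sstr characterization - shifting}. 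The paper's version has the virtue of being self-contained within the shifting chapter; yours has the virtue of explaining \emph{why} the two commutativity characterisations of SE (Theorems~\ref{thm:bool commuting} and~\ref{thm:SE characterization shifting}) end up equivalent.
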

\begin{proof}
We consider two cases.

Case 1: $\dim(\s)=\{x,y\}$. There are exactly $16$ systems under this case and it is easy to verify that the lemma holds for all of them.

Case 2: $\lvert \dim(\s)\rvert>2$. let $f=\dshift_x\circ\dshift_y$, $g'=\dshift_y\circ\dshift_x$. It is enough to prove that for every $\{x,y\}$-cube, $C\subseteq C(\s)$: 
$$\restr{f(\s)}{C}=\restr{g(\s)}{C}$$
It is easy to see that for every such $C$:
	$$
	\restr{f(\s)}{C}=f(\restr{\s}{C}),
	$$
	$$
	\restr{g(\s)}{C}=g(\restr{\s}{C})
	$$
Thus, the desired equality follows by Case 1 and Theorem \ref{thm:preservation under restrictions}.
\end{proof}
From \ref{thm:SE kept under shifting} and Lemma \ref{lem:elementary commuting - shifting} we get:
\begin{lemma}\label{lem:strong commuting - shifting}
If $~\s$ is an $SE$ system, and $Q$ is a sequence of operators from $\{\dshift_x~\vert~x\in \dim(\s)\}$ then $Q$ commutes on $\s$.
\end{lemma}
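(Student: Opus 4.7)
The plan is to reduce an arbitrary permutation of $Q$ to a product of adjacent transpositions and apply Lemma \ref{lem:elementary commuting - shifting} locally, using Theorem \ref{thm:SE kept under shifting} to guarantee that the SE hypothesis survives each shifting so that the local commutation remains available at every intermediate stage. This mirrors the pattern used to pass from Lemma \ref{lem:elementary commuting} to Lemma \ref{lem:strong commuting} in the Boolean-operator setting.

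Concretely, fix any permutation $Q'$ of $Q$; the goal is to establish $Q(\s) = Q'(\s)$. Since the symmetric group is generated by adjacent transpositions, $Q'$ is obtained from $Q$ by a finite chain of swaps of two neighboring entries. By induction on the length of such a chain, it suffices to handle the case in which $Q'$ differs from $Q$ in a single adjacent transposition. Write
$$Q = R_2\circ\dshift_y\circ\dshift_x\circ R_1, \qquad Q' = R_2\circ\dshift_x\circ\dshift_y\circ R_1,$$
where $R_1$ is the common prefix (applied first) and $R_2$ is the common suffix (applied last).

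Set $\sysnot{T} = R_1(\s)$. Because each entry of $R_1$ is of the form $\dshift_z$ for some $z\in\dim(\s)$, iterating Theorem \ref{thm:SE kept under shifting} shows that $\sysnot{T}$ is SE. Hence Lemma \ref{lem:elementary commuting - shifting} is available for $\sysnot{T}$ and yields
$$\dshift_y\circ\dshift_x(\sysnot{T}) \;=\; \dshift_x\circ\dshift_y(\sysnot{T}).$$
Applying $R_2$ to both sides and unwinding the compositions gives $Q(\s)=Q'(\s)$, which closes the induction.

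The main ``obstacle'' is purely organizational — the standard bubble-sort reduction from an arbitrary permutation to a sequence of adjacent swaps. There is no genuine mathematical difficulty: Theorem \ref{thm:SE kept under shifting} ensures that, no matter how many shifting operators of $R_1$ have already been applied, the current system is still SE, so Lemma \ref{lem:elementary commuting - shifting} may be invoked to justify each local swap.
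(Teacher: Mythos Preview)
Your argument is correct and is precisely the approach the paper intends: the paper does not give a detailed proof but simply states that the lemma follows from Theorem~\ref{thm:SE kept under shifting} and Lemma~\ref{lem:elementary commuting - shifting}, and your bubble-sort reduction to adjacent transpositions is exactly the standard way to combine these two ingredients. The only nitpick is that Lemma~\ref{lem:elementary commuting - shifting} assumes $x\neq y$, but if the two adjacent entries happen to be the same operator the swap is trivial, so this case needs no separate mention.
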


\subsection{A characterization of {\it Shattering} and {\it Strong-Shattering}}
In this section we present a characterization of $\str$ and $\sstr$ in terms of shiftings.
Recall from Definition \ref{def:``sequence of''} that he phrase ``$Q$ is a sequence of $W$'' means that $W$ is a set and $Q$ is a sequence of members of $W$ in which each member appears exactly once.
\begin{definition}
For a set $X$, define:
	$$
	\fshift(X)\defeq\big\{Q~:~Q\mbox{ is a sequence of }\{dshift_x~:~x\in X\}\big\}
	$$
\end{definition}
The following theorem is the main theorem of this section.
\begin{theorem}\label{thm:str,sstr characterization - shifting}
For every system $\s$:
\begin{enumerate}
\item{$$\str(\s)=\set(\bigsyscup\{Q(\s)~:~Q\in \fshift(\dim(\s))\})$$}
\item{$$\sstr(\s)=\set(\bigsyscap\{Q(\s)~:~Q\in \fshift(\dim(\s))\})$$}
\end{enumerate}
\end{theorem}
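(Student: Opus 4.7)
The plan is to prove both inclusions of each identity by combining two monotonicity statements for a single $\dshift_x$ with two explicit constructions of witnessing shift sequences. The monotonicity statements are $\str(\dshift_x(\s))\subseteq\str(\s)$ and $\sstr(\s)\subseteq\sstr(\dshift_x(\s))$ for every $x\in\dim(\s)$. To prove them I split on whether $x\in Y$ and decompose (strong-)shattering of $Y$ by a system $\sysnot{T}$ into (strong-)shattering of $Y\setminus\{x\}$ (if $x\in Y$) or of $Y$ (if $x\notin Y$) in the two restrictions of $\sysnot{T}$ at $x$. By Lemma~\ref{lem:shifting, restrictions and derivatives}, the two restrictions of $\dshift_x(\s)$ at $x$ are the derivatives of $\s$ at $x$, namely $\union{\{x\}}(\s)=\s'\syscup\s''$ and $\inter{\{x\}}(\s)=\s'\syscap\s''$; these sandwich the pair of restrictions $\s',\s''$ of $\s$ itself. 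Combined with the obvious monotonicity of $\str$ and $\sstr$ under inclusion of $S$, both statements follow. Iterating over a full sequence $Q\in\fshift(\dim(\s))$ yields $\str(Q(\s))\subseteq\str(\s)$ and $\sstr(\s)\subseteq\sstr(Q(\s))$. Since $Q(\s)$ is edge-sorted and hence satisfies $\set(Q(\s))=\str(Q(\s))=\sstr(Q(\s))$, this gives $\bigcup_Q\set(Q(\s))\subseteq\str(\s)$ and $\sstr(\s)\subseteq\bigcap_Q\set(Q(\s))$.

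For the reverse inclusion of (1), given $Y\in\str(\s)$ I would choose $Q$ that first applies $\dshift_x$ for every $x\in\dim(\s)-Y$ in any order, and then $\dshift_y$ for every $y\in Y$. After the first block, each $(\dim(\s)-Y)$-cube $C_f=\{v:\restr{v}{Y}=f\}$ is sorted in all its dimensions, so $S\cap C_f$ is $\smaller$-down-closed in $C_f$; because these shifts preserve $|S\cap C_f|$ and $Y$ is shattered by $\s$, $S\cap C_f$ is non-empty and therefore contains the $\smaller$-minimum $\mathbf{0}_{\dim(\s)-Y}\mymerge f$ for every $f$. Thus the $Y$-cube $R=\{\mathbf{0}_{\dim(\s)-Y}\mymerge f:f\in\{0,1\}^Y\}$ lies entirely in $S$ at the end of the first block. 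Every $y$-edge meeting $R$ lies entirely inside $R\subseteq S$ (since $y\in Y$, its two endpoints share the same $(\dim(\s)-Y)$-projection $\mathbf{0}$), hence is already sorted, so the second block leaves $R$ untouched. In particular $\mathbf{0}_{\dim(\s)-Y}\mymerge\mathbf{1}_Y\in S(Q(\s))$, whence $Y\in\set(Q(\s))$.

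Dually, for the reverse inclusion of (2), given $Y\notin\sstr(\s)$ I reverse the order: first shift every $y\in Y$, then every $x\in\dim(\s)-Y$. After the first block, each $Y$-cube $R_g=\{g\mymerge f:f\in\{0,1\}^Y\}$ is $Y$-sorted, so $S\cap R_g$ is $\smaller$-down-closed in $R_g$; by hypothesis no $R_g$ was originally contained in $S$, and $Y$-shifts preserve $|S\cap R_g|$, so the $\smaller$-maximum $g\mymerge\mathbf{1}_Y$ of every $R_g$ is absent from $S$. Equivalently, the $(\dim(\s)-Y)$-cube $C_{\mathbf{1}_Y}$ is disjoint from $S$ at the end of the first block. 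The second block acts within $(\dim(\s)-Y)$-cubes and preserves $|S\cap C_{\mathbf{1}_Y}|=0$, so $\mathbf{0}_{\dim(\s)-Y}\mymerge\mathbf{1}_Y\notin S(Q(\s))$, whence $Y\notin\set(Q(\s))$.

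The main obstacle is the monotonicity step, which requires matching the case split cleanly with Lemma~\ref{lem:shifting, restrictions and derivatives} and with the sandwich $\inter{\{x\}}(\s)\subseteq\s',\s''\subseteq\union{\{x\}}(\s)$. The two constructive arguments are then essentially dual and follow the natural intuition that down-shifting on $\dim(\s)-Y$ first concentrates mass in the $\mathbf{0}$-row, while down-shifting on $Y$ first empties the $\mathbf{1}$-column.
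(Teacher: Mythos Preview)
Your argument is correct. The monotonicity step is sound (though your sketch elides the case analysis slightly: for $x\notin Y$ one actually gets \emph{equality} of shattering status, since $\union{\{x\}}(\dshift_x(\s))=\union{\{x\}}(\s)$; likewise for $x\in Y$ and strong shattering, since $\inter{\{x\}}(\dshift_x(\s))=\inter{\{x\}}(\s)$; only the remaining two cases genuinely use the sandwich $\inter{\{x\}}(\s)\subseteq\s',\s''\subseteq\union{\{x\}}(\s)$). Your two explicit shift orders then pin down the reverse inclusions cleanly.

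This is, however, a genuinely different route from the paper's. The paper never proves the monotonicity of $\str,\sstr$ under $\dshift_x$ nor constructs witnessing orders directly. Instead it establishes the pointwise identity $\restr{Q(\s)}{\{v\}}=Q_{[v]}(\s)$ (Lemma~\ref{lem:shifting characterization helper}), converting any full shift sequence evaluated at a vertex $v$ into a sequence of the Boolean operators $\beta(x,v(x))$. The Boolean hierarchy lemma (Lemma~\ref{lem:boolean hrcy}) then shows that as $Q$ ranges over $\fshift(\dim(\s))$, the extreme values of $Q_{[v]}(\s)$ are exactly $\inter{v^{-1}(1)}\circ\union{v^{-1}(0)}(\s)$ and $\union{v^{-1}(0)}\circ\inter{v^{-1}(1)}(\s)$, and Theorem~\ref{thm:str and sstr characterization by boolean operators} identifies these with membership of $v^{-1}(1)$ in $\str(\s)$ and $\sstr(\s)$ respectively. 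So the paper reduces the shifting characterization to the Boolean-operator characterization. Your proof is more elementary and entirely self-contained within the shifting framework; the paper's proof is shorter given its prior machinery and, more importantly, exhibits the precise correspondence between full shift sequences and permutations of Boolean operators, which is what drives the parallel $k$-$SE$ theorems in the next chapter.
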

The following definition will be useful:
\begin{definition}
Let $x$ be an object and $i\in\{0,1\}$. Define:
$$
	\beta(x,i)\defeq
	\begin{cases}
		\union{\{x\}} &\mbox{if }i=0\\
		\inter{\{x\}} &\mbox{if }i=1
	\end{cases}
$$
\end{definition}
\begin{definition}
Let $\s$ be a system and let $x\in \dim(\s)$.\\
Let $C_i\defeq\{v\in C(\s)~:~v(x)=i\}$, where $i\in\{0,1\}$ denote the two sub-cubes of $C(\s)$ associated with $x$.\\
Define:
	$$
	\restr{\s}{x=i}\defeq\restr{\s}{C_i},~~i\in\{0,1\}
	$$
\end{definition}
The following lemma is straightforward.
\begin{lemma}\label{lem: down shifting and bool operations singleton}
Let $x,y$ be distinct elements and let $i,j\in\{0,1\}$. Then:
	\begin{enumerate}
	\item{The two operators $\restr{}{x=i}$ and $\restr{}{y=j}$ commute.}
	\item{The two operators $\restr{}{x=i}$ and $\dshift_y$ commute}
	\item{$\restr{}{x=i}\circ\dshift_x=\beta(x,i)$}
	%\item{$\restr{\dshift_x(\s)}{x=i}=\begin{cases}  \union{\{x\}}(\s) &\mbox{if }i=0\\
	%																			 					 \inter{\{x\}}(\s) &\mbox{if }i=1 \end{cases}$}
	\end{enumerate}
\end{lemma}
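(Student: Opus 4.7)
The proof proposal rests on a single geometric observation: since $x \ne y$, every $y$-edge of $C(\s)$ lies entirely inside one of the two sub-cubes $C_0, C_1$ associated with $x$, so both $\restr{}{x=i}$ and $\dshift_y$ act locally with respect to the $x$-partition. For Part 1, I would unpack $\restr{}{x=i}$ as restriction to $C_i$ followed by the normalization map $n$, and analogously $\restr{}{y=j}$ as restriction to $C'_j = \{v : v(y)=j\}$ followed by normalization. Both compositions $\restr{}{x=i}\circ\restr{}{y=j}$ and $\restr{}{y=j}\circ\restr{}{x=i}$ then collapse to the restriction of $\s$ to the codimension-$2$ sub-cube $C_i \cap C'_j$, followed by a single normalization onto $\{0,1\}^{\dim(\s)-\{x,y\}}$; commutativity is immediate.

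For Part 2, I would argue that because each $y$-edge sits inside a single $C_i$, the operator $\dshift_y$ permutes bits inside $C_0$ and inside $C_1$ independently. Hence $S(\dshift_y(\s)) \cap C_i$ equals the result of applying the $y$-shift to the pre-system $\sys{S(\s)\cap C_i}{C_i}$, and normalizing both sides gives $\restr{}{x=i}\circ\dshift_y = \dshift_y\circ\restr{}{x=i}$.

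For Part 3 -- the only content-bearing identity -- I would fix a vertex $w \in \{0,1\}^{\dim(\s)-\{x\}}$ and inspect the unique $x$-edge $\{u_0, u_1\}$ above $w$, with $u_k(x)=k$. The definition of edge-sorting places, after $\dshift_x$, the down-vertex $u_0$ in $S$ iff at least one of $u_0, u_1$ originally lay in $S$, and the up-vertex $u_1$ in $S$ iff both did. Thus $\restr{}{x=0}\circ\dshift_x(\s)$ records at position $w$ the union of the two $x$-slices of $\s$, which is precisely $\union{\{x\}}(\s) = \beta(x,0)(\s)$; symmetrically, $\restr{}{x=1}\circ\dshift_x(\s) = \inter{\{x\}}(\s) = \beta(x,1)(\s)$.

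The only obstacle is bookkeeping: one must track the normalization $n$ carefully, since $\restr{}{x=i}$, $\dshift_y$ and $\beta(x,i)$ all produce outputs over slightly different ambient cubes, and these must be reconciled when asserting operator equalities. Once the local decomposition of $\dshift_y$ in Part 2 is noted, Part 3 collapses to the two-element truth table ``union $=$ at-least-one, intersection $=$ both'', which is why the lemma is labelled straightforward.
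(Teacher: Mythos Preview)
Your proposal is correct and is precisely the direct unpacking of definitions that the paper has in mind; the paper itself offers no proof beyond declaring the lemma ``straightforward.'' Your Part~3 argument is in fact the explicit content of Lemma~\ref{lem:shifting, restrictions and derivatives} together with the identification of which restriction is the union and which is the intersection, and your edge-by-edge max/min computation is the natural way to see this.
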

\begin{definition}
Let $X$ be a set, let $Q\in \fshift(X)$ and let $v\in\{0,1\}^X$. Define $Q_{[v]}$ to be the sequence of Boolean operators that is derived from $Q$ by replacing every $\dshift_x$ by $\beta(x,v(x))$. 
\end{definition}
\begin{lemma}\label{lem:shifting characterization helper}
Let $\s$ be a system, let $v\in C(\s)$ and let $Q\in \fshift(\dim(\s))$. Then:
$$\restr{Q(\s)}{\{v\}}=Q_{[v]}(\s)$$
\end{lemma}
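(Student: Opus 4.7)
The plan is to proceed by induction on $|\dim(\s)|$, which equals the length of the sequence $Q$. The base case $\dim(\s) = \emptyset$ is immediate: $Q$ is empty, $v$ is the unique vertex of $\s$, and both $\restr{Q(\s)}{\{v\}}$ and $Q_{[v]}(\s)$ coincide with $\s$.

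For the inductive step I would isolate the innermost shift of $Q$. Writing $Q = Q' \circ \dshift_x$, where $\dshift_x$ is the operator applied first, set $i = v(x)$ and let $v'$ denote the restriction of $v$ to $\dim(\s) - \{x\}$. The key observation is that the single-vertex restriction factors as
\[
\restr{\s}{\{v\}} \;=\; \restr{(\restr{\s}{x=i})}{\{v'\}},
\]
since $\{v\}$ is contained in the subcube on which $x = i$. Next I would push $\restr{}{x=i}$ inward through $Q$: by part $2$ of Lemma \ref{lem: down shifting and bool operations singleton}, $\restr{}{x=i}$ commutes with every $\dshift_y$ appearing in $Q'$ (each such $y$ differs from $x$), and by part $3$ of the same lemma, $\restr{}{x=i} \circ \dshift_x = \beta(x,i)$. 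Hence
\[
\restr{Q(\s)}{x=i} \;=\; Q'(\beta(x,i)(\s)).
\]

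Set $\tilde\s = \beta(x,i)(\s)$, a system of dimension $\dim(\s) - \{x\}$, and note that $Q' \in \fshift(\dim(\tilde\s))$ and $v' \in C(\tilde\s)$. The inductive hypothesis then yields $\restr{Q'(\tilde\s)}{\{v'\}} = Q'_{[v']}(\tilde\s)$. Since the coordinates appearing in $Q'$ all differ from $x$, the sequence $Q'_{[v']}$ coincides with the tail of $Q_{[v]}$, and $Q_{[v]}(\s) = Q'_{[v']}(\beta(x,i)(\s)) = Q'_{[v']}(\tilde\s)$. Combining this with the factorization above gives the desired equality $\restr{Q(\s)}{\{v\}} = Q_{[v]}(\s)$.

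The main obstacle I foresee is the bookkeeping around dimensions and the implicit normalizations built into the $\restr{}{\cdot}$ operator: at each step one must verify that the operator being composed is of matching dimension, that the factorization of the single-vertex restriction into a coordinate restriction followed by a restriction to $\{v'\}$ is correct after normalization, and that the identification $Q'_{[v']}$ with the tail of $Q_{[v]}$ is legitimate. Once these checks are in place, every algebraic step is either a direct invocation of one of the three items of Lemma \ref{lem: down shifting and bool operations singleton} or an unfolding of the definition of $Q_{[v]}$.
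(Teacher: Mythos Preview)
Your inductive argument is correct and uses exactly the same ingredients as the paper, namely parts~2 and~3 of Lemma~\ref{lem: down shifting and bool operations singleton}. The paper's proof, however, packages these ingredients differently: instead of inducting on $|\dim(\s)|$, it observes once and for all that the single-vertex restriction $\restr{\cdot}{\{v\}}$ is itself the composition $Z$ of the coordinate restrictions $\restr{}{x=v(x)}$ for $x\in\dim(\s)$, and then argues the \emph{operator identity} $Z\circ Q = Q_{[v]}$ directly from Lemma~\ref{lem: down shifting and bool operations singleton} (part~1 lets one reorder the restrictions in $Z$, part~2 lets each $\restr{}{x=v(x)}$ slide inward past every $\dshift_y$ with $y\neq x$, and part~3 merges it with $\dshift_x$ into $\beta(x,v(x))$). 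Your induction is precisely this commutation argument unrolled one coordinate at a time, applied to a specific $\s$ rather than stated as an equality of operators. The paper's version is shorter and avoids the dimension bookkeeping you flagged as an obstacle; your version makes each step explicit and does not need part~1 of the lemma.
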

\begin{proof}
Let $X=\dim(\s)$ and let $Z$ be a sequence of $\{\restr{}{x=v(x)}\:~:~x\in X\}$. It is easy to see that for every $X$-system $\sysnot{G}$:
	$$
	Z(\sysnot{G})=\restr{\sysnot{G}}{\{v\}}.
	$$
Thus, $Z(Q(\s))=\restr{Q(\s)}{\{v\}}$. Also, by Lemma \ref{lem: down shifting and bool operations singleton}:
	$$
	Z\circ Q = Q_{[v]}.
	$$
Therefore, $Z(Q(\s))=Q_{[v]}(\s)$. This implies the desired equality.
\end{proof}
The following two lemmas finish the proof of Theorem \ref{thm:str,sstr characterization - shifting}:
\begin{lemma}
For every system $\s$:
$$\str(\s)=\set(\bigsyscup\{Q(\s)~:~Q\in \fshift(\dim(\s))\})$$
\end{lemma}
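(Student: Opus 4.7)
The plan is to reduce the characterization to the Boolean-operator characterization already established in Theorem \ref{thm:str and sstr characterization by boolean operators}, using Lemma \ref{lem:shifting characterization helper} as the bridge between sequences of down-shifts and sequences of Boolean operators.

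Write $X = \dim(\s)$. First I would unfold the right-hand side: since every $Q \in \fshift(X)$ preserves dimension, the system $\bigsyscup\{Q(\s) : Q\in \fshift(X)\}$ has dimension $X$, and a set $Y \subseteq X$ belongs to its $\set$ exactly when the characteristic function $\chi_Y \in \{0,1\}^X$ lies in $S(Q(\s))$ for some $Q$. Equivalently, $\restr{Q(\s)}{\{\chi_Y\}} = \mathcal{K}_1$ for some $Q \in \fshift(X)$, and by Lemma \ref{lem:shifting characterization helper} this is the condition $Q_{[\chi_Y]}(\s) = \mathcal{K}_1$. So the claim reduces to showing
\[ Y \in \str(\s) \iff \exists\, Q \in \fshift(X) : Q_{[\chi_Y]}(\s) = \mathcal{K}_1. \]

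Next I would identify which sequences of Boolean operators arise as $Q_{[\chi_Y]}$ as $Q$ ranges over $\fshift(X)$. By the definition of $\beta$, we have $\beta(x,1) = \inter{\{x\}}$ and $\beta(x,0) = \union{\{x\}}$, so $Q_{[\chi_Y]}$ ranges over all sequences (with each element used exactly once) of the collection
\[ \{\inter{\{x\}} : x \in Y\} \cup \{\union{\{x\}} : x \in X-Y\}. \]
By Lemma \ref{lem:boolean hrcy}, among all such sequences the permutation $\overline{Q_{[\chi_Y]}}$ that places all the unions at the beginning is maximal, in the sense that $Q_{[\chi_Y]}(\s) \subseteq \overline{Q_{[\chi_Y]}}(\s)$ for every choice of $Q$. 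Using the composition law $\alpha_{Y_1} \circ \alpha_{Y_2} = \alpha_{Y_1 \cup Y_2}$ for disjoint $Y_1,Y_2$ (one of the trivial commuting cases in Lemma \ref{lem:trivial commuting}), this maximal value collapses to $\inter{Y} \circ \union{X-Y}(\s)$.

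To finish, I would observe that every $Q_{[\chi_Y]}(\s)$ is a system on the empty set of dimensions, hence equals either $\mathcal{K}_0$ or $\mathcal{K}_1$, with $\mathcal{K}_0 \subsetneq \mathcal{K}_1$. Therefore the existence of some $Q$ with $Q_{[\chi_Y]}(\s) = \mathcal{K}_1$ is equivalent to the maximum attaining $\mathcal{K}_1$, that is, to $\inter{Y} \circ \union{X-Y}(\s) = \mathcal{K}_1$; and by Theorem \ref{thm:str and sstr characterization by boolean operators}(1) this is precisely $Y \in \str(\s)$. The main thing to be careful about is the bookkeeping of sequencing conventions, ensuring that ``unions at the beginning of the sequence'' in Lemma \ref{lem:boolean hrcy} really corresponds to applying $\union{X-Y}$ before $\inter{Y}$ in the composition $\inter{Y} \circ \union{X-Y}$; once this alignment is verified the rest of the argument is routine.
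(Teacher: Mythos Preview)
Your proposal is correct and follows essentially the same route as the paper: reduce to a pointwise statement at each $v\in C(\s)$, invoke Lemma~\ref{lem:shifting characterization helper} to replace $\restr{Q(\s)}{\{v\}}$ by $Q_{[v]}(\s)$, use Lemma~\ref{lem:boolean hrcy} to identify the maximal permutation as $\inter{Y}\circ\union{X-Y}$, and finish with Theorem~\ref{thm:str and sstr characterization by boolean operators}. The only cosmetic difference is that the paper keeps the $\bigsyscup$ outside and collapses it directly to $\inter{v^{-1}(1)}\circ\union{v^{-1}(0)}(\s)$, whereas you rephrase it as an existential and then argue via the $\{\mathcal{K}_0,\mathcal{K}_1\}$ dichotomy; this is the same idea.
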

\begin{proof}
Let $v\in C(\s)$. It is sufiicient to prove that:
	$$
	v^{-1}(1)\in \str(\s)\iff \restr{\bigsyscup\{Q(\s)~:~Q\in \fshift(\dim(\s))\}}{\{v\}}=\mathbb{K}_1.
	$$
Indeed:
\newpage
	\begin{align*}
	\restr{\bigsyscup\{Q(\s)~:~Q\in \fshift(\dim(\s))\}}{\{v\}} &= \intertext{by the definition of $\bigsyscup$}
	\bigsyscup\{\restr{Q(\s)}{\{v\}}~:~Q\in \fshift(\dim(\s))\} &= \intertext{by Lemma \ref{lem:shifting characterization helper}}
	\bigsyscup\{Q_{[v]}(\s)~:~Q\in \fshift(\dim(\s))\}				   &= \intertext{by Lemma \ref{lem:boolean hrcy}}
	\inter{v^{-1}(1)}(\union{v^{-1}(0)}(\s)) 										      
	\,.
	\end{align*}
By Theorem \ref{thm:str and sstr characterization by boolean operators} the lemma follows.
\end{proof}
Using similar arguments we also prove that:
\begin{lemma}
For every system $\s$:
$$\str(\s)=\set(\bigsyscap\{Q(\s)~:~Q\in \fshift(\dim(\s))\})$$
\end{lemma}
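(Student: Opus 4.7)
The identity as printed, $\str(\s)=\set(\bigsyscap\{Q(\s):Q\in\fshift(\dim(\s))\})$, cannot hold in full generality: combined with the preceding lemma it would force $\bigsyscap\{Q(\s)\}=\bigsyscup\{Q(\s)\}$, i.e.\ every down-shift sequence producing the same system. Reading this as a typographical slip and taking the left-hand side to be $\sstr(\s)$ — the statement required to complete Theorem \ref{thm:str,sstr characterization - shifting} — my plan is to run the preceding proof nearly verbatim, swapping each $\bigsyscup$ for $\bigsyscap$ and swapping the ``unions-first'' extremum of Lemma \ref{lem:boolean hrcy} for the ``intersections-first'' extremum.

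Concretely, fix $v\in C(\s)$, set $X=\dim(\s)$, and reduce the goal to
$$v^{-1}(1)\in\sstr(\s)\iff \restr{\bigsyscap\{Q(\s):Q\in \fshift(X)\}}{\{v\}}=\mathcal{K}_1.$$
First I would distribute the singleton-restriction inside the $\bigsyscap$ (restriction to a cube commutes with both Boolean operations, since it merely intersects each $S(Q(\s))$ with the same set) to obtain $\bigsyscap\{\restr{Q(\s)}{\{v\}}:Q\in \fshift(X)\}$, and then apply Lemma \ref{lem:shifting characterization helper} term by term to rewrite this as $\bigsyscap\{Q_{[v]}(\s):Q\in \fshift(X)\}$. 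As $Q$ ranges over all permutations in $\fshift(X)$, the induced $Q_{[v]}$ ranges over all orderings of the operator multiset $\{\inter{\{x\}}:x\in v^{-1}(1)\}\cup\{\union{\{x\}}:x\in v^{-1}(0)\}$.

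Lemma \ref{lem:boolean hrcy} then tells us that the pointwise-smallest system produced by any such ordering is the ``intersections-first'' permutation, which, after collapsing same-type adjacent operators via Lemma \ref{lem:trivial commuting}, equals $\union{v^{-1}(0)}\circ\inter{v^{-1}(1)}(\s)$. Since this minimum is itself realized by some admissible $Q_{[v]}$, it is an element of the set being intersected and hence equals the entire $\bigsyscap$. Finally, item $2$ of Theorem \ref{thm:str and sstr characterization by boolean operators}, applied with $Y=v^{-1}(1)$, asserts $\union{X-Y}\circ\inter{Y}(\s)=\mathcal{K}_1\iff Y\in\sstr(\s)$, which closes the equivalence. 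The only delicate bookkeeping is verifying that the ``intersections-first'' extreme is actually attained by some admissible $Q_{[v]}$ — which is immediate, since any ordering of the down-shifts is an element of $\fshift(X)$ — and that singleton-restriction really commutes with arbitrary Boolean intersection, which is a direct unwrapping of the definitions.
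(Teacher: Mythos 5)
Your reading of the statement is right — the left-hand side is a typo for $\sstr(\s)$, as required to complete Theorem \ref{thm:str,sstr characterization - shifting} — and your argument is correct. It is precisely the ``similar argument'' the paper alludes to, i.e.\ the mirror of the preceding lemma's proof: distribute the restriction to $\{v\}$ over $\bigsyscap$, apply Lemma \ref{lem:shifting characterization helper} termwise, identify the intersection with the intersections-first composition $\union{v^{-1}(0)}\circ\inter{v^{-1}(1)}(\s)$ via Lemmas \ref{lem:boolean hrcy} and \ref{lem:trivial commuting}, and conclude with item 2 of Theorem \ref{thm:str and sstr characterization by boolean operators}.
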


\subsection{A characterization of $SE$}
Theorem \ref{thm:SE kept under shifting} and Theorem \ref{thm:str,sstr characterization - shifting} imply the following theorem:
\begin{theorem}[\cite{BR95,Dress2}]\label{thm:SE characterization shifting}
The following two properties of a system $\s$ are equivalent:
	\begin{enumerate}
	\item{$\s$ is $SE$}
	\item{For every $Q',Q''\in \fshift(\s)$: 
	$$Q'(\s)= Q''(\s)$$}
\end{enumerate}
\end{theorem}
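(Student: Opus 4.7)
The plan is to handle the two implications separately, leveraging results that are already in place. The forward direction $(1)\Rightarrow(2)$ is essentially immediate, while the reverse direction $(2)\Rightarrow(1)$ is obtained by observing that the union/intersection characterizations of $\str$ and $\sstr$ collapse once all shifting sequences produce the same system.

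For $(1)\Rightarrow(2)$, I would simply invoke Lemma \ref{lem:strong commuting - shifting}. Indeed, any two sequences $Q', Q'' \in \fshift(\dim(\s))$ are, by Definition \ref{def:``sequence of''}, sequences in which each operator from $\{\dshift_x : x \in \dim(\s)\}$ appears exactly once; in particular, $Q''$ is a permutation of $Q'$. Since $\s$ is $SE$, Lemma \ref{lem:strong commuting - shifting} says that $Q'$ commutes on $\s$, so $Q'(\s) = Q''(\s)$.

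For $(2)\Rightarrow(1)$, I would use Theorem \ref{thm:str,sstr characterization - shifting}. Assume all members of $\fshift(\dim(\s))$ produce the same system when applied to $\s$, and call this common value $\sysnot{F}$. Then the collection $\{Q(\s) : Q \in \fshift(\dim(\s))\}$ is just the singleton $\{\sysnot{F}\}$, so both the union and the intersection over this collection equal $\sysnot{F}$. By Theorem \ref{thm:str,sstr characterization - shifting},
\begin{equation*}
\str(\s) = \set(\sysnot{F}) = \sstr(\s),
\end{equation*}
which is exactly the definition of $SE(\s)$.

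The proof is essentially a bookkeeping exercise; the conceptual work was done in Lemma \ref{lem:strong commuting - shifting} and Theorem \ref{thm:str,sstr characterization - shifting}. There is no real obstacle: the only subtlety worth pointing out is that the reverse direction does not need any recursion or induction on $\dim(\s)$ as the previous characterizations already packaged that effort. If anything, care must be taken only to confirm that $\fshift(\dim(\s))$ is nonempty (which is automatic, since $\dim(\s)$ is finite and any enumeration of its elements yields such a sequence), so that the common value $\sysnot{F}$ is well-defined.
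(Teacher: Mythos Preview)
Your proof is correct and is essentially the paper's own argument: the paper states only that the theorem follows from Theorem~\ref{thm:SE kept under shifting} and Theorem~\ref{thm:str,sstr characterization - shifting}, which amounts to exactly the two implications you wrote (your use of Lemma~\ref{lem:strong commuting - shifting} for $(1)\Rightarrow(2)$ is just the packaged form of Theorem~\ref{thm:SE kept under shifting} together with Lemma~\ref{lem:elementary commuting - shifting}).
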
  %Examples and applications

\newpage
\section{A weaker form of Shattering Extremality}
This chapter introduces a weaker form of Shattering Extremality which is denoted by {\it $k$-Shattering Extrmality} (or $k$-$SE)$ where $k$ is a natural number. We study this property via the concept of {\it local-operators} which is interesting in its own right. The first section introduces the concept of local operators and discusses some basic properties of these operators. The second section defines $k$-$SE$ and proves generalizations of Theorems \ref{thm:SE characterization shifting} and \ref{thm:bool commuting} regarding equivalence of $k$-$SE$ with weaker forms of commutativity of Boolean operators and of down-shiftings operators.

\subsection{Local operators}
Let $SYS$ denote the set of all systems. For a  system $\s$ and $Y\subseteq \dim(\s)$, the collection of $Y$-cubes of $C(\s)$ forms a partition of $C(\s)$. This partition induces a partition of $\s$ in which every component corresponds to a restriction of $\s$ to a $Y$-cube. Informally, a {\it $Y$-local operator} is a mapping from $SYS$ to $SYS$ whose behaviour on $\s$ depends only on its behaviour on the restrictions of $\s$ to $Y$-cubes. Surprisingly, all the operators discussed in the previous chapters ($\union{Y},\inter{Y},\dshift_x,\restr{}{x=i}$ and $\neg$) are ``highly-local'' (this will become formal later). 

Let $X$ be a set. It is convenient here to extend the definition of ``$C$ is a $Y$-cube of $\{0,1\}^X$'' to the case where $Y\not\subseteq X$. So, a new {\it $Y$ cube of $\{0,1\}^X$} when $Y\not\subseteq X$ is an $X\cap Y$-cube of $\{0,1\}^X$ under the original terminology. It is also useful to extend the definition of the operators $\restr{}{x=0}$ and $\restr{}{x=1}$ in the following way: For a system $\s$ such that $x\notin \dim(\s)$, define $\restr{\s}{x=0}=\restr{\s}{x=1}=\sys{\emptyset}{\{0,1\}^{\dim(\s)}}$. This way, these operators are total operators from $SYS$ to $SYS$.

\begin{definition}
Let $Y$ be a set. An operator $\alpha:SYS\rightarrow SYS$ is called $Y$-local if the following statements hold:
	\begin{enumerate}
	\item{$\alpha$ is total.}
	\item{There exists a set $Z\subseteq Y$ such that for every system $\s$: $\dim(\alpha(\s))=\dim(\s)-Z.$}
	%\item{For every system $\s$: $dim(\alpha(\s))=dim(\s)-Z.$}
	\item{$\alpha$ commutes with the operators $\restr{}{x=0}$ and $\restr{}{x=1}$ for all $x\notin Y$.}
	\end{enumerate}
\end{definition}         
The reader might wonder how does this definition captures the concept of locality. To answer this question, assume, for simplicity, that $Y\subseteq \dim(\s)$ and that $Z=\emptyset$ (namely, for all $\s$: $\dim(\alpha(\s))=\dim(\s))$. Note that every restriction of $\s$ to a $Y$-cube can be described by an appropriate sequence of operators of $\{\restr{}{x=i}~:~x\in \dim(\s)-Y\}$. Thus, item $3$ in the definition of $Y$-local amounts to:
\begin{center}
For every $C$, a $Y$-cube of $C(\s)$: $\restr{\alpha(\s)}{C}=\alpha(\restr{\s}{C})$.
\end{center}
Therefore, each component in the partitioning of $\alpha(\s)$ to the $Y$-cubes of $C(\s)$ depends only on the corresponding component of $\s$. Thus, the local behaviour of $\alpha$ on $Y$-systems determines its behaviour on every system. The fact that every $Y$-local operator is determined by its behaviour on $Y$-systems is expressed in the following straightforward lemma.
\begin{lemma}\label{lem:Y-cubes determine Y-local ops}
Let $\alpha$ and $\beta$ be two $Y$-local operators and let $\s$ be a system. Then the following statements are equivalent:
	\begin{enumerate}
	\item{$\alpha(\s)=\beta(\s)$}
	\item{For every $Y$-cube of $C(\s)$, $C$: $\alpha(\restr{\s}{C})=\beta(\restr{\s}{C})$}
	\end{enumerate}
\end{lemma}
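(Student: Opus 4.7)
The plan is to reduce both implications to a single ``locality identity'': for every $Y$-local operator $\alpha$, every system $\s$, and every $Y$-cube $C$ of $C(\s)$ determined by an assignment $v : \dim(\s) - Y \to \{0,1\}$, it holds that
\begin{equation*}
\alpha(\restr{\s}{C}) = \restr{\alpha(\s)}{C'},
\end{equation*}
where $C'$ is the $Y$-cube of $C(\alpha(\s))$ determined by the same assignment $v$. This is well-defined because the set $Z$ associated with $\alpha$ is contained in $Y$, so every coordinate of $\dim(\s) - Y$ survives in $\dim(\alpha(\s))$, and $v$ picks out a bona fide $Y$-cube there. To prove the identity I would factor $\restr{}{C}$ as a composition $\restr{}{x_1 = v(x_1)} \circ \cdots \circ \restr{}{x_k = v(x_k)}$ ranging over the coordinates of $\dim(\s) - Y$, and then push $\alpha$ past the composition one factor at a time using item $3$ of the definition of $Y$-locality (each $x_i$ lies outside $Y$, so each individual commutation is licensed).

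Granted the identity, the direction $(1) \Rightarrow (2)$ is nearly immediate: if $\alpha(\s) = \beta(\s)$ then their ambient cubes coincide, forcing $Z_\alpha$ and $Z_\beta$ to agree on $\dim(\s)$; hence the target cube $C'$ is the same on both sides, and
\begin{equation*}
\alpha(\restr{\s}{C}) = \restr{\alpha(\s)}{C'} = \restr{\beta(\s)}{C'} = \beta(\restr{\s}{C}).
\end{equation*}
For the converse $(2) \Rightarrow (1)$, fix any $Y$-cube $C$ of $C(\s)$; comparing dimensions in the identity $\alpha(\restr{\s}{C}) = \beta(\restr{\s}{C})$ yields $Y - Z_\alpha = Y - Z_\beta$, so in particular $C(\alpha(\s)) = C(\beta(\s))$. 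Applying the locality identity once per $Y$-cube of $C(\s)$ gives $\restr{\alpha(\s)}{C'} = \restr{\beta(\s)}{C'}$ for every $Y$-cube $C'$ of this common ambient cube; since the $Y$-cubes form a partition of $C(\alpha(\s))$, these local equalities reassemble to $S(\alpha(\s)) = S(\beta(\s))$ and hence $\alpha(\s) = \beta(\s)$.

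The only delicate step is the locality identity itself; the remainder is elementary bookkeeping on dimensions and partitions. The main technical point to watch is that the extended conventions (the meaning of a $Y$-cube when $Y \not\subseteq \dim(\s)$, and the rule that $\restr{\s}{x=i}$ is the empty $\dim(\s)$-system when $x \notin \dim(\s)$) must be applied uniformly. I would dispose of this by noting that replacing $Y$ by $Y \cap \dim(\s)$ leaves every expression in the statement unchanged, since coordinates outside $\dim(\s)$ are invisible to both the cube structure of $C(\s)$ and to the elementary restriction operators used in the factorization above.
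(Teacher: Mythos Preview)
Your proof is correct and matches the paper's intended argument: the paper does not give a formal proof (the lemma is declared ``straightforward''), but the discussion immediately preceding it isolates precisely your locality identity $\restr{\alpha(\s)}{C}=\alpha(\restr{\s}{C})$ (stated there under the simplifying assumption $Z=\emptyset$), derived exactly as you do by factoring $\restr{}{C}$ into elementary restrictions and commuting $\alpha$ past each one via item~3 of the definition. Your write-up simply supplies the additional bookkeeping for general $Z$ and for the case $Y\not\subseteq\dim(\s)$.
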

\begin{example}
The operator $\neg$ and the Identity operator are $\emptyset$-local with $Z=\emptyset$.
\end{example}
\begin{example}
Let $x$ be an object. The operators $\restr{}{x=0}$ and $\restr{}{x=1}$ are $\{x\}$-local with $Z=\{x\}$.
\end{example}
\begin{example}\label{Example:locality shift}
The operator $\dshift_x$ is not total since it is not defined on systems $\s$ with $x\notin \dim(\s)$. Thus, henceforth this operator is extended to such systems by $\dshift_x(\s)=\sys{\emptyset}{\{0,1\}^{\dim(\s)}}$. Under this modification, $\dshift_x$ is $\{x\}$-local with $Z=\emptyset$.
\end{example}
\begin{example}\label{Example:locality bool}
The operator $\alpha_{\{x\}}$ where $\alpha$ is one of the symbols `$\bigsyscup$' or `$\bigsyscap$', is not defined on systems $\s$ for which $x\notin \dim(\s)$. Thus, henceforth this operator is extended to such systems by $\alpha_{\{x\}}(\s)=\sys{\emptyset}{\{0,1\}^{\dim(\s)}}$. Under this modification, $\alpha_{\{x\}}$ is $\{x\}$-local with $Z=\{x\}$.
\end{example}
The following lemma is straightforward.
\begin{lemma}\label{lem:Y-locality main lemma}
Let $X$ and $Y$ be sets and let $\alpha$ be an $X$-local operator and $\beta$ be a $Y$-local operator. Then $\alpha\circ\beta$ is $X\cup Y$-local. 
\end{lemma}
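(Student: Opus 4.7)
The plan is to verify the three defining conditions of $(X\cup Y)$-locality directly for $\alpha\circ\beta$, using the corresponding properties of $\alpha$ and $\beta$. Since the conditions for a $W$-local operator are (1) totality, (2) the existence of a fixed ``dimension-deletion set'' $Z\subseteq W$ that describes how the operator changes $\dim$, and (3) commutation with every restriction operator $\restr{}{x=i}$ for indices $x\notin W$, this reduces to three routine checks.

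First I would handle totality: since both $\alpha$ and $\beta$ are total maps $SYS\to SYS$, their composition is total. Next, for condition (2), let $Z_\alpha\subseteq X$ and $Z_\beta\subseteq Y$ be the witness sets guaranteed for $\alpha$ and $\beta$. Then for any system $\s$ I would compute
$$
\dim(\alpha(\beta(\s))) \;=\; \dim(\beta(\s))-Z_\alpha \;=\; \bigl(\dim(\s)-Z_\beta\bigr)-Z_\alpha \;=\; \dim(\s)-(Z_\alpha\cup Z_\beta),
$$
and observe that $Z_\alpha\cup Z_\beta\subseteq X\cup Y$, so this supplies the required witness set for $\alpha\circ\beta$.

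For condition (3), take any $x\notin X\cup Y$. Then $x\notin Y$, so $\beta$ commutes with $\restr{}{x=i}$ for $i\in\{0,1\}$; and $x\notin X$, so $\alpha$ commutes with $\restr{}{x=i}$ as well. A two-step push-through then gives
$$
(\alpha\circ\beta)\circ\restr{}{x=i} \;=\; \alpha\circ(\beta\circ\restr{}{x=i}) \;=\; \alpha\circ(\restr{}{x=i}\circ\beta) \;=\; (\restr{}{x=i}\circ\alpha)\circ\beta \;=\; \restr{}{x=i}\circ(\alpha\circ\beta),
$$
which is exactly the commutation required.

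There is no genuine obstacle here: the lemma is essentially bookkeeping, and the only subtlety worth flagging in the write-up is the convention that $\dshift_x$ and $\alpha_{\{x\}}$ are extended to a total operator on systems $\s$ with $x\notin\dim(\s)$ (as in Examples \ref{Example:locality shift} and \ref{Example:locality bool}), so that totality is genuinely available in step (1). Everything else follows directly from the definition.
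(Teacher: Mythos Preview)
Your proof is correct and is exactly the routine verification the paper has in mind: the paper does not spell out a proof at all, merely declaring the lemma ``straightforward,'' and your three-step check of totality, the witness set $Z_\alpha\cup Z_\beta\subseteq X\cup Y$, and the two-step commutation push-through is the intended argument. The closing remark about the total extensions of $\dshift_x$ and $\alpha_{\{x\}}$ is extraneous here (those conventions matter for the examples, not for this lemma about arbitrary local operators), but everything else is clean.
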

Using Lemma \ref{lem:Y-locality main lemma} we can compute the locality of sequences of local operators.

Let $Q$ be a sequence of Boolean operators. We say that $Q$ is {\it meaningful} if every two distinct operators, $q',q''$, in $Q$ satisfy\footnote{Recall from Definition \ref{def:bool ops} that $\dim(\inter{Y})=\dim(\union{Y})=Y$.} $\dim(q')\cap \dim(q'')=\emptyset$. Note that if $Q$ is a sequence of Boolean opearators which is not meaningful then the composition of the elements of $Q$ is the operator which is nowhere defined. Thus, all the theorems and lemmas we have proved regarding sequences of Boolean operators remain true if we consider only meaningful sequences.  For a sequence of Boolean operator $Q$, define 
\[
%\dim(Q)\defeq\{x~\vert~\exists Y:~one~of~\inter{Y},~\union{Y}~is~in~Q~and~x\in Y\}
\dim(Q)\defeq\bigcup_{q\mbox{ is in }Q}{\dim(q)}
\]
Thus, by Lemma \ref{lem:Y-locality main lemma} we obtain:
\begin{example}\label{Example:locality bool sequences}
Let $Q$ be a meaningful sequence of Boolean operators. Then $Q$ is $\dim(Q)$-local.
\end{example}
Similarly, we have for down-shifting operators:
\begin{example}\label{Example:locality shift sequences}
Let $X$ be a set and let $Q\in \fshift(X)$. Then $Q$ is $X$-local.
\end{example}

\subsection{$k$-$SE$}
\begin{definition}\label{def:k-SE}
Let $\s$ be a system and let $k\in\mathbb{N}$. $\s$ is called $k-SE$ if:
\begin{center}
For every cube of $C(\s)$, $C$ with $\lvert \dim(C)\rvert\leq k$: $SE(\restr{\s}{C})$
\end{center}
We will refer to the statement ``$\s$ is $k-SE$'' by ``$SE_{k}(\s)$''
\end{definition}
\subsubsection{k-SE and Boolean operators}
The following theorem is the generalization of Theorem \ref{thm:bool commuting} concerning $k$-$SE$.
\begin{theorem}\label{thm:refined bool commuting}
Let $\s$ be a system and let $k\in \mathbb{N}$. The following statements are equivalent:
\begin{enumerate}
\item{Every meaningful sequence of Boolean operators $Q$ with $\lvert \dim(Q)\rvert\leq k$ commute on $\s$.}
\item{For all disjoint $Y',Y''$ such that $\lvert Y'\cup Y''\rvert \leq k$: $\inter{Y'}$ and $\union{Y''}$ commute on $\s$.}
\item{$SE_{k}(\s)$.}
\end{enumerate}
\end{theorem}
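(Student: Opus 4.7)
The plan is to establish the cycle $(1)\Rightarrow(2)\Rightarrow(3)\Rightarrow(1)$, with $(2)\Rightarrow(3)$ being the substantive step.

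The implication $(1)\Rightarrow(2)$ is immediate. Given disjoint $Y',Y''\subseteq\dim(\s)$ with $\lvert Y'\cup Y''\rvert\leq k$, the two-element sequence $(\inter{Y'},\union{Y''})$ is meaningful (since $Y'\cap Y''=\emptyset$) and satisfies $\lvert\dim(Q)\rvert=\lvert Y'\cup Y''\rvert\leq k$, so (1) hands over the desired commutativity.

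For $(3)\Rightarrow(1)$ I would appeal to locality. Let $Q$ be a meaningful sequence of Boolean operators with $\lvert\dim(Q)\rvert\leq k$, and let $Q'$ be a permutation of $Q$. By Example~\ref{Example:locality bool sequences}, both $Q$ and $Q'$ are $\dim(Q)$-local, so Lemma~\ref{lem:Y-cubes determine Y-local ops} reduces $Q(\s)=Q'(\s)$ to checking $Q(\restr{\s}{C})=Q'(\restr{\s}{C})$ for every $\dim(Q)$-cube $C$ of $C(\s)$. Since $\lvert\dim(C)\rvert=\lvert\dim(Q)\rvert\leq k$, hypothesis (3) yields $SE(\restr{\s}{C})$, whereupon Lemma~\ref{lem:strong commuting} finishes the step.

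The delicate direction is $(2)\Rightarrow(3)$, which I expect to be the main obstacle. Fix a cube $C$ of $C(\s)$ with $\lvert\dim(C)\rvert\leq k$; the goal is $SE(\restr{\s}{C})$. By Theorem~\ref{thm:bool commuting} it suffices to show that $\inter{Y_1}$ and $\union{Y_2}$ commute on $\restr{\s}{C}$ for every $Y_1,Y_2\subseteq\dim(C)$. The case $Y_1\cap Y_2\neq\emptyset$ is handled by Lemma~\ref{lem:trivial commuting}. Otherwise $\lvert Y_1\cup Y_2\rvert\leq\lvert\dim(C)\rvert\leq k$, so hypothesis (2) says that $\inter{Y_1}$ and $\union{Y_2}$ commute on $\s$, and the task reduces to transferring this commutativity down to the restriction. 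To carry out the transfer, I would write $\restr{\s}{C}=R(\s)$, where $R$ is the composition of the operators $\restr{}{x=v(x)}$ over all $x$ in $Z:=\dim(\s)\setminus\dim(C)$, with $v$ the assignment on $Z$ that defines $C$. Because $Y_1\cup Y_2\subseteq\dim(C)$ is disjoint from $Z$, the locality of $\inter{Y_1}$ and $\union{Y_2}$ (Example~\ref{Example:locality bool sequences}) allows each of them to commute with every $\restr{}{x=v(x)}$ and hence with $R$. Conjugating by $R$, the commutativity on $\s$ lifts to $\inter{Y_1}\circ\union{Y_2}(R(\s))=\union{Y_2}\circ\inter{Y_1}(R(\s))$, and Theorem~\ref{thm:bool commuting} completes the argument.
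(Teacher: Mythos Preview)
Your proof is correct and rests on the same two pillars as the paper's: the locality machinery (Lemma~\ref{lem:Y-cubes determine Y-local ops} and Example~\ref{Example:locality bool sequences}) to pass between $\s$ and its cube-restrictions, and Theorem~\ref{thm:bool commuting} applied cube-by-cube. The organization differs, however. The paper proves $1\iff 2$ directly from Lemma~\ref{lem:boolean hrcy}: since $\underline{Q}(\s)\subseteq Q(\s)\subseteq\overline{Q}(\s)$ for every permutation, commutation of the two extremes $\inter{Y'}\circ\union{Y''}$ and $\union{Y''}\circ\inter{Y'}$ already forces all permutations to coincide, so (2) is equivalent to (1) without going through (3). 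It then treats $2\iff 3$ in one stroke by rewriting (2), via locality, as ``$\inter{Y}$ and $\union{X\setminus Y}$ commute on $\restr{\s}{C}$ for every $X$-cube $C$ with $\lvert X\rvert\le k$ and every $Y\subseteq X$,'' which is exactly Theorem~\ref{thm:bool commuting} applied to each such $\restr{\s}{C}$. Your cycle achieves the same end but trades the one-line use of Lemma~\ref{lem:boolean hrcy} for an extra invocation of Lemma~\ref{lem:strong commuting} in the step $(3)\Rightarrow(1)$; both are perfectly valid, and the paper's route is marginally shorter.
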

\begin{proof}
$1\iff 2$ is a simple corollary of Lemma \ref{lem:boolean hrcy}.

$2\iff 3$: By Lemma \ref{lem:Y-cubes determine Y-local ops} and Example \ref{Example:locality bool sequences}, Item $2$ is equivalent to the following statement.
	\begin{center}
 	For every $C$, a $X$-cube of $C(\s)$ where $\lvert X\rvert\leq k$: $\inter{Y}$ and $\union{X-Y}$ commute on $\restr{\s}{C}$ 
	\end{center}
By Theorem \ref{thm:bool commuting}, the above is equivalent to $SE_k(\s)$.
\end{proof}

\subsubsection{$k$-$SE$ and down-shifting operators}
The following theorem generalizes Theorem \ref{thm:str,sstr characterization - shifting}.
\begin{theorem}[Refined commution of down-shifts]
Let $\s$ be a system and let $k\in \mathbb{N}$. The following statements are equivalent:
\begin{enumerate}
\item{For all $Y\subseteq \dim(\s)$ such that $\lvert Y\rvert\leq k$ and for every $Q',Q''\in \fshift(Y)$: 
$$Q'(\s)= Q''(\s)$$}
\item{$SE_{k}(\s)$}
\end{enumerate}
\end{theorem}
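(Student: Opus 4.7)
The plan is to reduce the statement to Theorem \ref{thm:SE characterization shifting} via the locality machinery, following the exact template used in the Boolean case (Theorem \ref{thm:refined bool commuting}). The two inputs I would rely on are Example \ref{Example:locality shift sequences}, which says that every $Q\in\fshift(Y)$ is a $Y$-local operator, and Lemma \ref{lem:Y-cubes determine Y-local ops}, which says that a $Y$-local operator is determined by its action on the restrictions of $\s$ to its $Y$-cubes.

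First I would fix $Y\subseteq \dim(\s)$ with $\lvert Y\rvert\le k$ and any $Q',Q''\in\fshift(Y)$. Since both $Q'$ and $Q''$ are $Y$-local, Lemma \ref{lem:Y-cubes determine Y-local ops} yields
\[
Q'(\s)=Q''(\s)\iff Q'(\restr{\s}{C})=Q''(\restr{\s}{C}) \text{ for every } Y\text{-cube } C \text{ of } C(\s).
\]
Quantifying over all admissible $Y$ and over all $Q',Q''\in\fshift(Y)$, item $1$ is equivalent to the following: for every cube $C$ of $C(\s)$ with $\lvert\dim(C)\rvert\le k$, any two sequences in $\fshift(\dim(C))$ agree on $\restr{\s}{C}$.

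Second, for a fixed such cube $C$, Theorem \ref{thm:SE characterization shifting} applied to the system $\restr{\s}{C}$ (whose dimension set is exactly $\dim(C)$) tells us that this inner condition is equivalent to $SE(\restr{\s}{C})$. Combining, item $1$ is equivalent to $SE(\restr{\s}{C})$ holding for every cube $C$ of $C(\s)$ with $\lvert\dim(C)\rvert\le k$, which by Definition \ref{def:k-SE} is precisely $SE_k(\s)$.

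The proof is essentially routine once the locality framework is in place; the one point that needs attention is matching up the quantifiers, i.e.\ verifying that ranging over pairs $(Y,C)$ with $Y\subseteq\dim(\s)$, $\lvert Y\rvert\le k$, and $C$ a $Y$-cube of $C(\s)$ is the same as ranging over cubes $C$ of $C(\s)$ with $\lvert\dim(C)\rvert\le k$. This is immediate since each cube of $C(\s)$ is a $Y$-cube for a unique $Y\subseteq\dim(\s)$, namely $Y=\dim(C)$, so no cube is over- or under-counted in the translation.
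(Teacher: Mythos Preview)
Your proof is correct and follows essentially the same approach as the paper: it invokes Example \ref{Example:locality shift sequences} and Lemma \ref{lem:Y-cubes determine Y-local ops} to localize the statement to $Y$-cubes, and then applies the unrefined shifting characterization (the paper cites Theorem \ref{thm:str,sstr characterization - shifting}, you cite its immediate corollary Theorem \ref{thm:SE characterization shifting}, which is the more natural reference here) on each restriction $\restr{\s}{C}$. Your explicit bookkeeping of the $(Y,C)$ quantifiers is a welcome clarification of what the paper leaves implicit.
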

\begin{proof}
The proof easily follows Lemma \ref{lem:Y-cubes determine Y-local ops}, Theorem \ref{thm:str,sstr characterization - shifting} and Example \ref{Example:locality shift sequences}
\end{proof}  %Examples and applications

\newpage
\section{Arrangements of hyperplanes in a Euclidean space}
Let $H$ be an arrangement of hyperplanes in a Euclidean space, $V=\mathbb{R}^{d}$. For each hyperplane $e\in H$ let one half-plane determined by $e$ be its {\it positive side} and the other its {\it negative side}. The hyperplanes of $H$ cut $V$ into open regions ({\it cells}). Corresponding to each cell $c$ there is $f_{c}\in\{+,-\}^{H}$ such that
$$f_{c}(e)=\begin{cases}
+ & \mbox{if }c\mbox{ is in the positive side of }e\,,\\
- & \mbox{if }c\mbox{ is in the negative side of }e\,.
\end{cases}$$

In this chapter we analyze the system $\s_H\defeq\sys{\{f_c~:~c\mbox{ is a cell of }H\}}{\{+,-\}^H}$.
Section \ref{subsection:Notations and preliminaries} introduces preliminaries. Sections \ref{subsection:geom interpretation of str} and \ref{subsection:geom interpretation of sstr} prove the following characterizations of $\str(\s_H)$ and $\sstr(\s_H)$:
%\begin{center}
%TODO: Not to put the things below in equation but rather in eumerate!!!
\begin{enumerate}[label=(\roman*)]
\item{$X\subseteq H$ is shattered by $\s_H$ if and only if the normal vectors of the hyperplanes in $X$ are linearly independent}\label{geom:int:item: i}
\item{$X$ is strongly-shattered by $\s_H$ if and only if in addition to \ref{geom:int:item: i} there is no hyperplane $h\in E-X$ such that $\bigcap{X}\subseteq h$.}\label{geom:int:item: ii}
\end{enumerate}

Note that if $H$ is an arrangement of lines in the plane and $\{l_1,l_2\}\subseteq H$ then the first property amounts to "$l_1$ and $l_2$ are not parallel" and the second property amounts to "The intersection point of $l_1,l_2$ is simple\footnote{there is no other line in $H$ that is incident to this point} in $H$".

Section \ref{subsection:counting cells} demonstrates a usage of the above characterizations of $\str(\s_H),\sstr(\s_h)$ for counting the number of cells of some arrangements.
Section \ref{subsection:Questions} discusses a certain generalization of such systems and presents some open questions.

Figure \ref{fig:hyperplanes} gives an example of an arrangement of 4 lines which partition the plane into $10$ regions. The positive side of each line in the arrangement is shaded.
\begin{figure}[!htb]
\centering
\includegraphics[scale=.7]{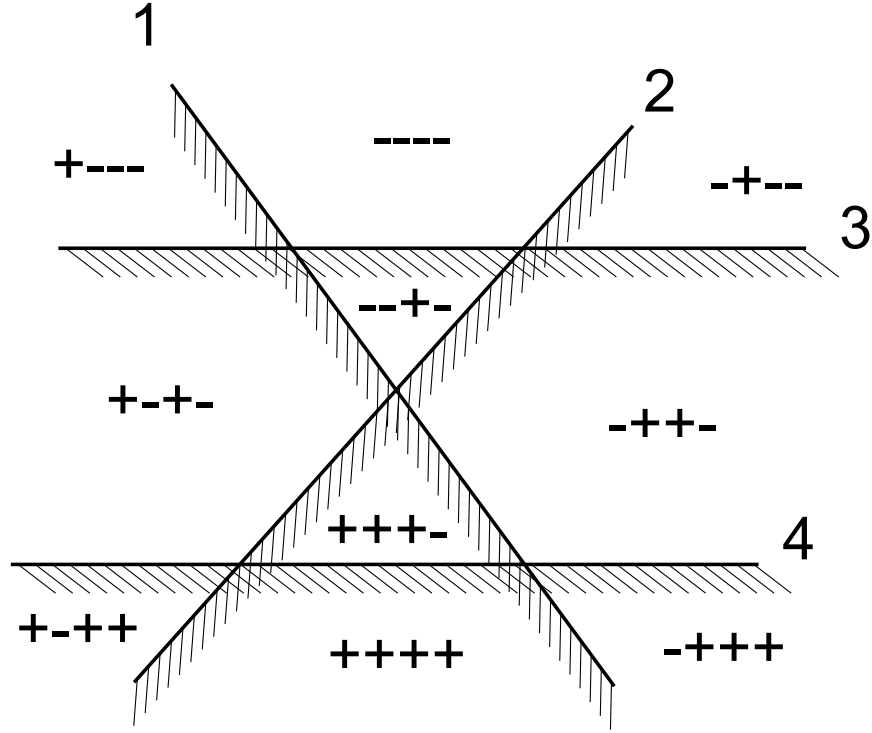}
\caption{An arrangement of (oriented) lines in the plane.}
\label{fig:hyperplanes}
\end{figure}
%\clearpage
\newpage

\subsection{Notations and preliminaries}\label{subsection:Notations and preliminaries}
Let $V=\mathbb{R}^d$ be a Euclidean space. A {\it linear hyperplane}, is a $(d-1)$-dimensional subspace of $V$, i.e.,
$$
\{\vec{v}\in V~:~\vec{\alpha}\cdot\vec{v}=0\}
$$
where $\vec{\alpha}$ is a fixed nonzero vector in $V$ and $\vec{\alpha}\cdot\vec{v}$ is the usual inner product. An {\it affine hyperplane}, or simply a {\it hyperplane}, is a translate of a linear hyperplane, i.e.
$$
\{\vec{v}\in V~:~\vec{n}\cdot\vec{v}=r\}
$$
where $\vec{n}$ is a fixed nonzero vector in $V$ and $r$ is a fixed scalar. $\vec{n}$ is called a {\it normal} of the hyperplane. Note that the normal is unique up to scaling.

An (open) {\it half-space} is a set $\{\vec{v}\in V~:~\vec{\alpha}\cdot\vec{v}> c\}$  for some $\alpha\in V$, $c\in\mathbb{R}$. If $e$ is a hyperplane in $V$, then the complement $V-e$ has two components each of which is an open half-space.
It is convenient to associate every (oriented) hyperplane $e\subseteq V$, with a vector $\vec{n_e}\in V$ and a number $r_e\in\mathbb{R}$ such that:
	\begin{enumerate}
	\item{$e=\{\vec{x}\in V~:~\vec{n_e}\cdot \vec{x}=r_e\}$}
	\item{The {\it positive side} of $e$ is the set $\{\vec{x}\in V~:~\vec{n_e}\cdot \vec{x}>r_e\}$.}
	\end{enumerate}

For a vector $\vec{v}\in V$, let $\sign(\vec{v})\in\{+,0,-\}^d$ be such that, for all $i\leq d$: 
\[ \sign(\vec{v})_i\defeq \begin{cases}
						+ &\mbox{if }v_i>0\,,\\
						0 &\mbox{if }v_i=0\,,\\
						- &\mbox{if }v_i<0\,.
						\end{cases}
\]  
For $A\subseteq V$, let $ch(A)$ denote the {\it convex hull} of $A$.

Let $H$ be an arrangement of hyperplanes in $V$. We use $\bigcap{H}$ and $\bigcup{H}$ to denote the intersection and union of the hyperplanes in $H$.
\begin{definition}
$H$ is called \deftext{independent (dependent)} if the set $\{\vec{n_e}~:~e\in H\}$ is an independent (dependent) set of vectors.
\end{definition}
\begin{lemma}
If $H$ is independent then $\bigcap{H}\neq\emptyset$ 
\end{lemma}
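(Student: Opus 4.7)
The plan is to recognize this claim as a consequence of a basic linear-algebra fact: a linear system whose coefficient matrix has full row rank is always consistent. Writing each hyperplane $e \in H$ as $\{\vec{x}\in V : \vec{n_e}\cdot \vec{x}=r_e\}$, the intersection $\bigcap H$ is literally the solution set of the linear system $A\vec{x}=\vec{r}$, where $A$ is the matrix whose rows are the normals $\vec{n_e}$ and $\vec{r}$ is the column vector whose entries are the scalars $r_e$. Hence "$\bigcap H \neq \emptyset$" is equivalent to "this linear system is consistent."

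First I would use the independence hypothesis to conclude $\operatorname{rank}(A)=|H|$, i.e., $A$ has full row rank. Next I would appeal to the Rouché--Capelli criterion: $A\vec{x}=\vec{r}$ is consistent if and only if $\operatorname{rank}(A)=\operatorname{rank}([A \mid \vec{r}])$. Since the augmented matrix $[A\mid \vec{r}]$ is obtained from $A$ by appending a single column, its rank lies between $\operatorname{rank}(A)$ and $|H|$, which by the previous step coincide. Therefore the ranks are equal and a solution $\vec{x}\in V$ exists, proving $\bigcap H\neq \emptyset$.

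If one prefers a more conceptual formulation, the same fact can be phrased via the linear map $T : V \to \mathbb{R}^{|H|}$ given by $T(\vec{x})_e = \vec{n_e}\cdot \vec{x}$. Its transpose sends the standard basis to the vectors $\vec{n_e}$, so independence of the normals says the transpose is injective; equivalently (by rank--nullity), $T$ itself is surjective. In particular the vector $\vec{r}$ lies in the image of $T$, i.e., there exists $\vec{x}\in V$ with $T(\vec{x})=\vec{r}$, and any such $\vec{x}$ lies in $\bigcap H$.

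I do not expect any real obstacle; the lemma is essentially a translation of "full row rank implies surjective" into the geometric language of hyperplane arrangements. The only thing worth flagging is that the statement would fail without independence—parallel hyperplanes with different offsets give linearly dependent normals and empty intersection—so the hypothesis is used precisely at the step where we conclude $\operatorname{rank}(A)=|H|$.
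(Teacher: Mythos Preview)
Your proposal is correct and follows essentially the same approach as the paper: both rewrite $\bigcap H$ as the solution set of the linear system $\vec{n_e}\cdot\vec{x}=r_e$ for $e\in H$ and then invoke the fact that independence of the rows (the normals $\vec{n_e}$) guarantees a solution. The paper simply states this last step without elaboration, whereas you spell it out via Rouch\'e--Capelli or, equivalently, surjectivity of the associated linear map.
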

\begin{proof}
Let $\vec{x}\in V$. By the definition of $\vec{n_e}$: 
$$\vec{x}\in\bigcap{H}\iff \forall e\in H: \vec{n_e}\cdot \vec{x}= r_e.$$
Thus it is enough to prove that there exists a solution, $\vec{x}$, for the linear system:
$$n_e\cdot x = r_e,~\forall e\in H.$$
Independence of the $n_e$'s implies that there exists such an $x$.
\end{proof}
\begin{definition}
Let $Y\subseteq H$. $Y$ is called \deftext{$H$-regular} if 
	$$Y=\{e\in H~:~\bigcap{Y}\subseteq e\}.$$ 
\end{definition}
\begin{lemma}\label{lem:irregularity}
Let $Y\subseteq H$ such that $\bigcap{Y}\neq\emptyset$ and let $h$ be a hyperplane such that $\bigcap{Y}\subseteq h$. Then
$$\vec{n_h}\in \spn\{\vec{n_e}~:~e\in Y\}.$$
\end{lemma}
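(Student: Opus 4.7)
The plan is to prove this by contradiction, reducing it to a basic linear-algebraic fact: a linear equation $\vec{n_h}\cdot\vec{x}=r_h$ that holds on the entire solution set of a linear system $\{\vec{n_e}\cdot\vec{x}=r_e\}_{e\in Y}$ must have its coefficient vector $\vec{n_h}$ in the span of the coefficient vectors $\{\vec{n_e}\}_{e\in Y}$.

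First, I would fix a point $\vec{x_0}\in \bigcap Y$, which exists by hypothesis. Since $\bigcap Y \subseteq h$, we have $\vec{n_h}\cdot\vec{x_0}=r_h$ as well. Now suppose for contradiction that $\vec{n_h}\notin\spn\{\vec{n_e}~:~e\in Y\}$. Let $W=\spn\{\vec{n_e}~:~e\in Y\}$; then $\vec{n_h}\notin W$, which by the double-orthogonal-complement identity $(W^\perp)^\perp=W$ means $W^\perp\not\subseteq\{\vec{n_h}\}^\perp$. Hence there exists $\vec{v}\in W^\perp$ with $\vec{n_h}\cdot\vec{v}\neq 0$.

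Next I would use this $\vec{v}$ to construct a point in $\bigcap Y$ that is \emph{not} in $h$, contradicting $\bigcap Y\subseteq h$. For every $t\in\mathbb{R}$, set $\vec{x_t}=\vec{x_0}+t\vec{v}$. Since $\vec{v}\in W^\perp$, we have $\vec{n_e}\cdot\vec{x_t}=\vec{n_e}\cdot\vec{x_0}=r_e$ for every $e\in Y$, so $\vec{x_t}\in \bigcap Y$. However, $\vec{n_h}\cdot\vec{x_t}=r_h+t(\vec{n_h}\cdot\vec{v})$, and since $\vec{n_h}\cdot\vec{v}\neq 0$ we may choose $t$ so that this quantity differs from $r_h$; then $\vec{x_t}\notin h$, contradicting $\bigcap Y\subseteq h$.

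The only ``obstacle'' is keeping the linear algebra crisp; there is no real difficulty, since the lemma is essentially a restatement of the fact that the row space of a consistent linear system equals the set of linear functionals that are constant on its solution set. The proof above is basically two lines once the orthogonal complement gadget is set up, and it requires nothing beyond standard inner-product geometry in $\mathbb{R}^d$.
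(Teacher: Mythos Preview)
Your proof is correct and follows essentially the same approach as the paper: both hinge on the identity $(U^{\perp})^{\perp}=U$ with $U=\spn\{\vec{n_e}:e\in Y\}$. The paper's proof is the one-line assertion ``it is easy to verify that $\vec{n_h}\in(U^\perp)^\perp=U$''; your argument simply unpacks this verification (phrased contrapositively) by exhibiting, for any $\vec v\in U^\perp$, the translate $\vec{x_0}+t\vec v$ inside $\bigcap Y$ and using $\bigcap Y\subseteq h$ to force $\vec{n_h}\cdot\vec v=0$.
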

\begin{proof}
Let $U\defeq \spn\{\vec{n_e}~:~e\in Y\}$. It is easy to verify that $\vec{n_h}\in (U^{\perp})^{\perp}=U$.
\end{proof}
\begin{lemma}\label{lem:regularity}
Let $Y\subseteq E$ be independent and $H$-regular. Then: 
	$$\bigcap{Y}-\bigcup{(H-Y)}\neq\emptyset.$$
\end{lemma}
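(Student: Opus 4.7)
My plan is to exhibit a point inside the affine flat $\bigcap Y$ that avoids every hyperplane outside $Y$, using dimension considerations.

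First, since $Y$ is independent, the previous lemma gives $\bigcap Y \neq \emptyset$. Moreover, the set $\bigcap Y$ is an affine subspace of $V$ of dimension $d-\lvert Y\rvert$: it is the solution set of the linear system $\vec{n_e}\cdot \vec{x}=r_e$ for $e\in Y$, whose coefficient matrix has rank $\lvert Y\rvert$ by independence of the normals.

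Next, I would use $H$-regularity to control the ``bad'' hyperplanes $h\in H-Y$. By definition, $Y=\{e\in H~:~\bigcap Y\subseteq e\}$; hence for each $h\in H-Y$ we have $\bigcap Y \not\subseteq h$. Therefore, $(\bigcap Y)\cap h$ is a proper affine subspace of $\bigcap Y$, and in particular has strictly smaller dimension than $\bigcap Y$ itself.

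Finally, the desired set is
\[
\bigcap Y \;-\; \bigcup_{h\in H-Y}\bigl((\bigcap Y)\cap h\bigr),
\]
which is $\bigcap Y$ with finitely many proper affine subspaces removed. A nonempty affine subspace over $\mathbb{R}$ is never covered by finitely many proper affine subspaces of itself (for instance, each proper affine subspace has Lebesgue measure zero relative to $\bigcap Y$, so their finite union does too). Hence the displayed set is nonempty, which is precisely $\bigcap Y - \bigcup (H-Y)\neq\emptyset$.

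The only non-routine point is justifying that the proper affine subspaces $(\bigcap Y)\cap h$ cannot cover $\bigcap Y$; as indicated, this is standard (measure-zero argument, or a direct inductive argument on dimension using that $\mathbb{R}$ is infinite). Everything else is a direct unpacking of independence and $H$-regularity.
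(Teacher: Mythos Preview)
Your proof is correct and follows essentially the same route as the paper's: show that each $h\in H-Y$ meets $\bigcap Y$ only in a proper affine subspace, and then argue that finitely many such subspaces cannot cover $\bigcap Y$. The only cosmetic differences are that the paper appeals to the Baire Category Theorem for the covering step (where you use a measure-zero argument), and that the paper routes the ``$\bigcap Y\not\subseteq h$'' conclusion through Lemma~\ref{lem:irregularity} rather than reading it off directly from the definition of $H$-regularity as you do.
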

\begin{proof}
By Lemma \ref{lem:irregularity}, there exists no $e\in H-Y$ such that $\bigcap{Y}\subseteq e$. Thus, every $e\in H-Y$ intersects $\bigcap{Y}$ in a proper affine sub-space, $A_e$ , of $\bigcap{Y}$. Thus, $\{A_e~:~e\in H-Y\}$ is a finite collection of proper affine sub-spaces of $\bigcap{Y}$ and thus, by Baire Theorem, cannot cover it.
\end{proof}

\subsection{Geometric interpretation of shattering}\label{subsection:geom interpretation of str}
Let $H$ be an arrangement of hyperplanes in $\mathbb{R}^{d}$ and let $\s_H$ be the corresponding system.\\
This section proves and discusses the following theorem:
\begin{theorem}\label{thm:main thm geometring str}
Let $X\subseteq H$. Then the following statements are equivalent
\begin{enumerate}
\item{$\s_H$ shatters $X$.}
\item{$X$ is independent.}
\end{enumerate}
\end{theorem}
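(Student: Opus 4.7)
\textbf{Proof plan for Theorem~\ref{thm:main thm geometring str}.} The key reformulation is that $\s_H$ shatters $X$ exactly when every sign pattern $\sigma\in\{+,-\}^X$ is realized by some point $\vec{x}\in\mathbb{R}^d\setminus\bigcup H$, in the sense that $\sign(\vec{n_e}\cdot\vec{x}-r_e)=\sigma(e)$ for every $e\in X$. This is because such an $\vec{x}$ lies in some cell $c$, and the restriction of $f_c$ to $X$ equals $\sigma$, so $f_c$ is an extension of $\sigma$ to $H-X$ witnessing shattering.

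For the direction (independent $\Rightarrow$ shatters), I would take $\sigma\in\{+,-\}^X$, encode the signs as $\pm 1\in\mathbb{R}$, and solve the linear system $\vec{n_e}\cdot\vec{x}=r_e+\sigma(e)$, $e\in X$. Linear independence of $\{\vec{n_e}\}_{e\in X}$ guarantees a solution $\vec{x}_0$, which automatically satisfies $\vec{n_e}\cdot\vec{x}_0-r_e=\sigma(e)\neq 0$ for all $e\in X$. The open set $U=\{\vec{x}:\sigma(e)(\vec{n_e}\cdot\vec{x}-r_e)>0~\forall e\in X\}$ is a nonempty neighbourhood of $\vec{x}_0$, and since $H-X$ is finite and each hyperplane in it has codimension $1$, the set $U\setminus\bigcup(H-X)$ is still nonempty; any point therein realizes $\sigma$.

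For the converse (shatters $\Rightarrow$ independent) I would argue by contrapositive. Assume $X$ is dependent and pick a nontrivial relation $\sum_{e\in X}\lambda_e\vec{n_e}=\vec{0}$. For every $\vec{x}\in\mathbb{R}^d$,
\[
\sum_{e\in X}\lambda_e\bigl(\vec{n_e}\cdot\vec{x}-r_e\bigr)=-\sum_{e\in X}\lambda_e r_e=:c,
\]
a constant independent of $\vec{x}$. Now choose $\sigma\in\{+,-\}^X$ by $\sigma(e)=-\sign(\lambda_e)$ if $\lambda_e\neq 0$ (and arbitrarily if $\lambda_e=0$) in case $c\geq 0$, and by the opposite sign in case $c<0$. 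If some $\vec{x}$ realized $\sigma$ on $X$, then each summand $\lambda_e(\vec{n_e}\cdot\vec{x}-r_e)$ would be $\leq 0$ (with at least one strict inequality) in the first case, contradicting $\sum\lambda_e(\vec{n_e}\cdot\vec{x}-r_e)=c\geq 0$; the second case is symmetric. Hence $\s_H$ fails to shatter $X$.

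I do not foresee a real obstacle; the only delicate point is the perturbation argument in the first direction, namely that deleting finitely many codimension-$1$ affine subspaces from a nonempty open set leaves it nonempty. Everything else is linear algebra packaged around the observation that the map $\vec{x}\mapsto(\vec{n_e}\cdot\vec{x}-r_e)_{e\in X}$ is surjective onto $\mathbb{R}^X$ exactly when the normals of $X$ are independent, and otherwise its image is trapped in a proper affine hyperplane of $\mathbb{R}^X$ that misses at least one open orthant.
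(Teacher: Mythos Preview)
Your argument is correct. The forward direction (independence $\Rightarrow$ shattering) is essentially the paper's: both solve $\vec{n_e}\cdot\vec{x}=r_e\pm 1$ for $e\in X$ and then locate a cell; the paper simply picks any cell whose closure contains the solution, whereas you perturb off the hyperplanes of $H-X$. Either works.

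For the converse, the core idea is the same---a nontrivial dependence among the $\vec{n_e}$'s forces at least one sign pattern to be unrealizable---but the organization differs. The paper first proves, via a separate convex-hull lemma (Lemma~\ref{lem:center is in ch}) and an affine-map argument (Lemma~\ref{lem:str implies intersection}), that shattering implies $\bigcap X\neq\emptyset$; it then translates so that $r_e=0$ for all $e\in X$, reducing to the linear case where $\sum\alpha_e(\vec{n_e}\cdot\vec{x})=0$ identically. You bypass this detour entirely by observing that $\sum_{e\in X}\lambda_e(\vec{n_e}\cdot\vec{x}-r_e)$ is the constant $c=-\sum\lambda_e r_e$ regardless of translation, and then splitting on the sign of $c$ to select the forbidden pattern. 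Your route is more economical and avoids the auxiliary lemmas; the paper's route has the side benefit of isolating the fact $\bigcap X\neq\emptyset$, which it reuses later in the strong-shattering analysis.
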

\begin{lemma}
Let $X\subseteq H$. Then
\begin{center}
 $X$ is independent $\implies$ $\s_H$ shatters $X$.
\end{center}
\end{lemma}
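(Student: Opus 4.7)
My plan is to fix an arbitrary $f\in\{+,-\}^X$ and exhibit a point $\vec{x}\in V$ lying in some cell of $H$ whose sign with respect to each $e\in X$ agrees with $f(e)$; such a cell $c$ then satisfies $f_c$ agrees with $f$ on $X$, and together with an arbitrary extension on $H-X$ it witnesses that $\s_H$ shatters $X$. The key idea is that linear independence of the normals lets me realize any prescribed sign pattern on $X$, and a standard genericity argument lets me avoid the remaining hyperplanes.

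First I would use independence to realize the signs. Consider the linear map $L:V\to\mathbb{R}^{X}$ defined by $L(\vec{x})=(\vec{n}_e\cdot \vec{x})_{e\in X}$. Because $\{\vec{n}_e:e\in X\}$ is linearly independent, $L$ is surjective. Choose any $\vec{y}\in\mathbb{R}^{X}$ with $\sign(y_e-0)$ matching $f(e)$ and with $|y_e-r_e|>0$ in the desired direction (for instance $y_e=r_e+1$ if $f(e)=+$ and $y_e=r_e-1$ if $f(e)=-$), and let $\vec{x}_0\in L^{-1}(\vec{y})$. By continuity of $L$, the set
\[
U_f\;\defeq\;\{\vec{x}\in V~:~\sign(\vec{n}_e\cdot \vec{x}-r_e)=f(e)\text{ for all }e\in X\}
\]
is an open neighborhood of $\vec{x}_0$, hence a non-empty open subset of $V$.

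Next I would perturb inside $U_f$ to avoid the other hyperplanes. The set $\bigcup_{h\in H-X} h$ is a finite union of affine hyperplanes of $V$, each of which is a proper affine subspace and therefore nowhere dense. Hence its complement is open and dense in $V$, and its intersection with the non-empty open set $U_f$ is non-empty. Pick any $\vec{x}$ in $U_f\setminus\bigcup_{h\in H-X} h$; then $\vec{x}$ is a point of $V$ lying off every hyperplane of $H$, so it belongs to a unique cell $c$ of the arrangement. By construction, $f_c(e)=f(e)$ for every $e\in X$, and the extension $g\defeq f_c\restriction_{H-X}$ satisfies $g\mymerge f = f_c\in S(\s_H)$. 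Since $f$ was arbitrary, $\s_H$ shatters $X$.

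The only mild subtlety is ensuring that $U_f$ is actually non-empty rather than merely that each individual strict inequality is satisfiable; this is precisely where linear independence of the normals is used, via surjectivity of $L$. Once that step is in hand, the passage from ``a point with the right signs on $X$'' to ``a point avoiding all hyperplanes in $H-X$ with the right signs on $X$'' is routine, since a non-empty open subset of $\mathbb{R}^d$ cannot be covered by finitely many proper affine subspaces (this is essentially the same Baire-style argument invoked in Lemma \ref{lem:regularity}).
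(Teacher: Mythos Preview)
Your proof is correct and follows essentially the same approach as the paper: use linear independence of the normals to solve the system $\vec{n}_e\cdot\vec{x}=y_e$ with $y_e=r_e\pm 1$ according to $f(e)$, thereby producing a point with the prescribed signs on $X$. The only minor difference is in how you handle the possibility that this point lies on some hyperplane of $H-X$: the paper simply takes any cell whose closure contains the solution (which automatically has the correct signs on $X$), whereas you perturb within the open set $U_f$ to land in the interior of a cell; both are routine and equally valid.
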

\begin{proof}
It is enough to show that for every $f\in\{+,-\}^X$ there exists some cell $c$ such that $f_c$ and $f$ agree on $X$. Let $f\in\{+,-\}^X$. Define 
$$y_e\defeq \begin{cases} r_e + 1 &\mbox{if }f(e)=+\\
													r_e - 1 &\mbox{if }f(e)=-
						\end{cases}$$
Since $X$ is independent the linear system 
	$$
	\vec{n_e}\cdot \vec{x} = y_e, e\in X
	$$
has a solution in $\mathbb{R}^d$. Let $c$ denote a cell whose closure, $\overline{c}$, contains such a solution. Then $f_c$ and $f$ agrees on $X$ as required.
\end{proof}
Proving the other requires more work. We split it to two parts; first, it is shown that if $X\subseteq H$ is shattered by $\s_H$ then $\bigcap{X}\neq\emptyset$. In the second part we establish that if $\s_H$ shatters $X$ then $X$ is independent.

For the first part, the following lemma is useful.
\begin{lemma}\label{lem:center is in ch}
Let $k\in\mathbb{N}$ and let $A\subseteq\mathbb{R}^k$ be a set that satisfies:
	\[(\forall f\in\{+,-\}^k)(\exists v\in A):~\sign(v)=f.
	\]
Then $\vec{0}\in ch(A).$
\end{lemma}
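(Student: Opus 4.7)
The plan is to argue by contradiction using the separating hyperplane theorem, together with a small perturbation to deal with degenerate separators. First I would reduce to the case of a finite set $A$: for each sign pattern $f\in\{+,-\}^k$, choose one $v_f\in A$ with $\sign(v_f)=f$ and set $A'=\{v_f:f\in\{+,-\}^k\}$. Then $A'$ is finite, satisfies the same hypothesis, and since $ch(A')\subseteq ch(A)$ it suffices to show $\vec{0}\in ch(A')$.

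Next, assume for contradiction that $\vec{0}\notin ch(A')$. Because $A'$ is finite, $ch(A')$ is a compact convex polytope, so strict separation applies: there exist a nonzero $\vec{u}\in\mathbb{R}^k$ and a constant $c>0$ with $\vec{u}\cdot v\geq c$ for every $v\in A'$.

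The hard part is that $\vec{u}$ may have some zero coordinates, in which case $-\sign(\vec{u})$ is not an element of $\{+,-\}^k$ and the hypothesis cannot be applied directly. To resolve this, I would perturb $\vec{u}$ to a nearby vector $\vec{u}'$ with all coordinates nonzero: for each $i$ with $u_i=0$, set $u'_i$ to an arbitrary value in $\{-\delta,+\delta\}$, and keep $u'_i=u_i$ otherwise. Since $A'$ is finite and $\vec{u}\cdot v\geq c>0$ for all $v\in A'$, taking $\delta>0$ smaller than $c/(1+\max_{v\in A'}\lVert v\rVert_1)$ guarantees $\vec{u}'\cdot v>0$ for every $v\in A'$.

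Finally, since every coordinate of $\vec{u}'$ is nonzero, the pattern $f:=-\sign(\vec{u}')$ lies in $\{+,-\}^k$. By the hypothesis (witnessed by $v_f\in A'$), there exists $v\in A'$ with $\sign(v)=f$, so $v_i$ and $u'_i$ have opposite signs for every $i$, giving $\vec{u}'\cdot v=\sum_i u'_iv_i<0$, in contradiction with $\vec{u}'\cdot v>0$. This contradiction yields $\vec{0}\in ch(A')\subseteq ch(A)$, as required.
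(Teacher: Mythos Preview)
Your argument is correct. The reduction to a finite witness set $A'$, the strict separation of $\vec 0$ from the compact polytope $ch(A')$, the perturbation of the separating functional to kill zero coordinates, and the final contradiction via the sign pattern $-\sign(\vec u')$ all go through as written.

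The paper, however, proceeds differently: it only remarks that the lemma ``is easily proved by induction on $k$''. The intended elementary induction is the following. For each $g\in\{+,-\}^{k-1}$, pick $v_+,v_-\in A$ with $\sign(v_+)=(g,+)$ and $\sign(v_-)=(g,-)$; the convex combination of $v_+$ and $v_-$ that annihilates the last coordinate still has sign pattern $g$ on the first $k-1$ coordinates (a positive combination of two numbers of the same sign keeps that sign). Collecting these $2^{k-1}$ points gives a subset of $ch(A)\cap(\mathbb{R}^{k-1}\times\{0\})$ satisfying the hypothesis in dimension $k-1$, and the inductive hypothesis places $\vec 0$ in its convex hull, hence in $ch(A)$. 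This route is entirely self-contained and avoids any appeal to separation theorems. Your approach, by contrast, is non-inductive and conceptually cleaner once one is willing to invoke the separating-hyperplane theorem; the perturbation step is the only place requiring care, and you handled it correctly.
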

Lemma \ref{lem:center is in ch} is easily proved by induction on $k$.
\begin{lemma}\label{lem:str implies intersection}
Let $X\subseteq H$ be a non-empty set. Then
$$X\in \str(\s_H)\implies \bigcap{X}\neq\emptyset.$$
\end{lemma}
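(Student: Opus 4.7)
The plan is to transfer the problem from the ambient space $V=\mathbb{R}^d$ to the ``coordinate space'' $\mathbb{R}^X$ via the natural affine map induced by the hyperplanes in $X$, and then to use Lemma \ref{lem:center is in ch} together with the convexity of the image.

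Concretely, enumerate $X = \{e_1,\dots,e_k\}$ and consider the affine map $\Phi : \mathbb{R}^d \to \mathbb{R}^X$ given by $\Phi(\vec{x})_{e} = \vec{n_e}\cdot \vec{x} - r_e$ for $e \in X$. The key observation linking geometry to arithmetic is that for any point $\vec{v}\in V$, if $c$ is a cell containing $\vec{v}$, then $\sign(\Phi(\vec{v}))$ (coordinate-wise in $\{+,0,-\}^X$) equals $f_c\!\restriction_X$ whenever $\vec{v}$ misses every hyperplane of $X$. Moreover, $\bigcap X \neq \emptyset$ iff $\vec{0}\in \Phi(\mathbb{R}^d)$, since $\vec{x}\in \bigcap X$ is exactly the condition $\Phi(\vec{x}) = \vec{0}$.

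The proof then proceeds in three quick steps. First, note that the image $L := \Phi(\mathbb{R}^d) \subseteq \mathbb{R}^X$ is an affine subspace, hence convex. Second, assume $\s_H$ shatters $X$. Then for every sign pattern $f\in\{+,-\}^X$ there is a cell $c$ with $f_c\!\restriction_X = f$; pick any $\vec{v_c}\in c$, which automatically avoids each hyperplane in $X$, and set $A := \{\Phi(\vec{v_c}) : f \in \{+,-\}^X\}\subseteq L$. By construction, $A$ realizes every sign pattern in $\{+,-\}^X$, so Lemma \ref{lem:center is in ch} (applied with $k=|X|$) yields $\vec{0}\in ch(A)$. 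Third, since $L$ is convex and contains $A$, we get $\vec{0}\in ch(A)\subseteq L$, i.e., there exists $\vec{x}^*\in \mathbb{R}^d$ with $\Phi(\vec{x}^*)=\vec{0}$. Hence $\vec{x}^*\in \bigcap X$, and $\bigcap X \neq \emptyset$.

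There is no real obstacle here: the only substantive content is packaged inside Lemma \ref{lem:center is in ch}, which is already available. The only fussy point is making sure the representative points $\vec{v_c}$ are chosen in the open cells (so that none of their coordinates $\Phi(\vec{v_c})_e$ vanish), so that the sign vector is indeed in $\{+,-\}^X$ and not merely in $\{+,0,-\}^X$; this is automatic since cells are the components of $V-\bigcup H\supseteq V-\bigcup X$.
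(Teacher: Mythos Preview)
Your proof is correct and follows essentially the same approach as the paper: both define the affine map $\vec{x}\mapsto (\vec{n_e}\cdot\vec{x}-r_e)_{e\in X}$, use shattering to realize all sign patterns in the image, apply Lemma~\ref{lem:center is in ch} to get $\vec{0}$ in the convex hull, and then invoke convexity of the image (you use that the image $L$ is an affine subspace hence convex, while the paper uses the equivalent fact that affine maps commute with convex hulls) to conclude $\vec{0}$ lies in the image.
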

\begin{proof}
Let $A\in\mathbb{R}^{X\times d}$ be a matrix with the $\vec{n_e}$'s in $X$ as its rows and let $\vec{r}\in\mathbb{R}^{X}$ be a vector with the $r_e$'s as its entries.
It is enough to show that there exists $x$ such that $A\vec{x}=\vec{r}$.

Let $T$ denote the map $\vec{x}\mapsto A\vec{x}-\vec{r}$. $T$ is an affine map. Since $\s_H$ shatters $X$, it follows that:
	$$
	(\forall f\in\{+,-\}^X)(\exists x\in\mathbb{R}^d):\sign(A\vec{x}-\vec{r})=f
	$$
Let $A\subseteq\mathbb{R}^d$ be a set that for every $f\in\{+,-\}^X$ contains $x_f\in\mathbb{R}^d$ for which $\sign(A\vec{x}-\vec{r})=f$. Therefore, according to Lemma \ref{lem:center is in ch}, it follows that $\vec{0}\in ch(T(A))$. Since $T$ is affine, we have that $ch(T(A))=T(ch(A))$. Thus, there exists $\vec{x}\in ch(A)$ such that $T(\vec{x})=0$, which means that: $A\vec{x}=\vec{r}$ as required.
\end{proof}
\begin{lemma}
Let $X\subseteq H$. Then
\begin{center}
 $\s_H$ shatters $X$ $\implies$ $X$ is independent.
\end{center}
\end{lemma}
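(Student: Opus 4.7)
I would prove the contrapositive (directly), by assuming $\s_H$ shatters $X$ and extracting a contradiction from the assumption that $\{\vec{n_e} : e \in X\}$ is linearly dependent. The key tool is the affine map $T:\mathbb{R}^d \to \mathbb{R}^X$ defined by $(T\vec{x})_e = \vec{n_e}\cdot \vec{x} - r_e$, so that the sign vector $\sign(T\vec{x})$ encodes on which side of each hyperplane $e \in X$ the point $\vec{x}$ lies. Shattering of $X$ says precisely that every sign pattern in $\{+,-\}^X$ is attained by $\sign \circ T$ on some $\vec{x}\in\mathbb{R}^d$ (lying in an open cell, so all coordinates of $T\vec{x}$ are nonzero).

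Suppose for contradiction that $\{\vec{n_e}\}_{e \in X}$ is linearly dependent. Then there exist scalars $(\lambda_e)_{e\in X}$, not all zero, with $\sum_{e\in X}\lambda_e \vec{n_e}=\vec{0}$. For every $\vec{x}\in\mathbb{R}^d$ we get
\begin{equation*}
\sum_{e\in X}\lambda_e\,(T\vec{x})_e \;=\; \Bigl(\sum_{e\in X}\lambda_e \vec{n_e}\Bigr)\cdot \vec{x} \;-\;\sum_{e\in X}\lambda_e r_e \;=\; c,
\end{equation*}
where $c := -\sum_{e\in X}\lambda_e r_e$ is a constant independent of $\vec{x}$. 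Hence the image of $T$ is contained in the affine hyperplane $\{\vec{y}\in\mathbb{R}^X : \sum_e \lambda_e y_e = c\}$ of $\mathbb{R}^X$.

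Now the plan is to exhibit one sign pattern that cannot be realized on this affine hyperplane. Split $X=X^{+}\cup X^{-}\cup X^{0}$ according to the sign of $\lambda_e$; since $\lambda\neq\vec{0}$, the set $X^{+}\cup X^{-}$ is nonempty. If $c\geq 0$, consider the sign pattern $f$ with $f(e)=-$ for $e\in X^{+}$, $f(e)=+$ for $e\in X^{-}$, and arbitrary on $X^{0}$: any $\vec{y}$ with $\sign(\vec{y})=f$ satisfies $\lambda_e y_e<0$ for every $e\in X^{+}\cup X^{-}$, hence $\sum_e \lambda_e y_e<0\leq c$, a contradiction. The case $c<0$ is handled symmetrically by reversing the signs on $X^{+}$ and $X^{-}$. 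This produces an unrealizable sign pattern, contradicting that $\s_H$ shatters $X$.

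The main obstacle, and really the only one, is locating the right unrealizable sign pattern; everything else is bookkeeping. The sign split $X=X^{+}\cup X^{-}\cup X^{0}$ together with a case split on the sign of $c$ pins it down. Note that this argument is essentially an affine version of the observation that a linear subspace of $\mathbb{R}^X$ of codimension at least one cannot meet all $2^{\lvert X\rvert}$ open orthants.
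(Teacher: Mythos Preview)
Your proof is correct and shares the same core idea as the paper's: from a nontrivial linear relation $\sum_e \lambda_e \vec{n_e}=0$, cook up a sign pattern on $X$ that forces a contradiction via the linear functional $\vec{y}\mapsto\sum_e \lambda_e y_e$.

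The organization differs in one substantive way. The paper first invokes Lemma~\ref{lem:str implies intersection} (shattering implies $\bigcap X\neq\emptyset$), translates so that the origin lies in $\bigcap X$, and thereby reduces to the linear case $r_e=0$ for $e\in X$; in your language this forces $c=0$. It then picks the sign pattern $f(e)=\sign(\alpha_e)$, uses scaling (legitimate once the hyperplanes in $X$ pass through the origin) to make each $|\vec{x}\cdot\vec{n_e}|\geq 1$, and concludes $0\geq\sum_e|\alpha_e|$. You instead stay in the affine setting, keep the constant $c$, and handle it by a case split on $\sign(c)$, choosing the sign pattern \emph{opposite} to $\sign(\lambda_e)$ (or its negation). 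Your route is a bit more self-contained, since it bypasses Lemma~\ref{lem:str implies intersection} (and the convex-hull Lemma~\ref{lem:center is in ch} behind it); the paper's route trades that for a cleaner endgame once the reduction is made.
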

\begin{proof}
By Lemma \ref{lem:str implies intersection} we may assume that for all $e\in X$: $r_e=0$. (Otherwise, translate $\mathbb{R}^d$ such that the origin is in $\bigcap X$)
Let $\{\alpha_e~:~e\in X\}$ be coefficients such that $\sum_{e\in X}{\alpha_e \vec{n_e}}=0$. It is enough to show that $\forall e\in X:\alpha_e=0$. 

Let $f\in\{+,-\}^X$ such that
$$f(e)\defeq \begin{cases} + &\mbox{if }\alpha_e\geq 0\\
													 - &\mbox{otherwise }
						\end{cases}.$$
Since $\s_H$ shatters $X$ there exists $x\in\mathbb{R}^d$ such that 
$$\sign(\vec{x}\cdot \vec{n_e})=f(e)$$
Furhtermore, we may choose $\vec{x}$ such that $\forall e\leq X: \lvert \vec{x}\cdot \vec{n_e}\rvert\geq 1$.
	\begin{align*}
	0 &= \vec{x}\cdot \sum_{e\in X}{\alpha_e \vec{n_e}} &\mbox{since }\sum_{e\in X}{\alpha_e \vec{n_e}}=0\\
		&= \sum_{e\in X}{\alpha_e \vec{x}\cdot \vec{n_e}}\\
		&\geq \sum_{\alpha_e\geq 0}{\alpha_e}-\sum_{\alpha_e< 0}{\alpha_e} &\mbox{by definition of $\vec{x}$}\\
	\end{align*}
Therefore:
$$\sum_{\alpha_e\geq 0}{\alpha_e}-\sum_{\alpha_e< 0}{\alpha_e}=0.$$
This is only possible if $\forall e\in X:\alpha_e=0$. 
\end{proof}

\subsection{Geometric interpretation of strong-shattering}\label{subsection:geom interpretation of sstr}
Let $H$ be an arrangement of hyperplanes in $\mathbb{R}^{d}$ and let $\s_H$ be the corresponding system.
This section proves and discusses the following theorem:
\begin{theorem}\label{thm:main thm geometring sstr}
Let $X\subseteq H$. Then the following statements are equivalent
\begin{enumerate}
\item{$\s_H$ strongly-shatters $X$.}
\item{$X$ is independent and $H$-regular.}
\end{enumerate}
\end{theorem}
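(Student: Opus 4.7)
The plan is to prove the two implications separately, using Theorem \ref{thm:main thm geometring str} for the shattering part and Lemmas \ref{lem:regularity} and \ref{lem:irregularity} to control the interplay between independence and $H$-regularity.

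For $(2)\Rightarrow(1)$, assume $X$ is independent and $H$-regular. Lemma \ref{lem:regularity} supplies a point $p\in\bigcap X - \bigcup(H-X)$; since $p$ lies off every hyperplane in $H-X$, the vector $g\in\{+,-\}^{H-X}$ defined by $g(e)\defeq\sign(\vec{n_e}\cdot p - r_e)$ is well defined, and I claim it witnesses strong shattering. Given $f\in\{+,-\}^X$, independence of $\{\vec{n_e}~:~e\in X\}$ lets me pick a direction $\vec{v}$ with $\sign(\vec{n_e}\cdot\vec{v})=f(e)$ for every $e\in X$. For all sufficiently small $\epsilon>0$, the point $p+\epsilon\vec{v}$ has sign pattern $f$ on $X$ (because $\vec{n_e}\cdot p=r_e$ for $e\in X$) and still sign pattern $g$ on $H-X$ (by continuity of $\vec{n_e}\cdot\vec{x}-r_e$ together with $\vec{n_e}\cdot p-r_e\neq 0$ for $e\in H-X$). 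Hence the cell containing $p+\epsilon\vec{v}$ realizes $g\mymerge f$.

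For $(1)\Rightarrow(2)$, strong shattering implies shattering, so Theorem \ref{thm:main thm geometring str} gives independence of $X$; what remains is $H$-regularity. Suppose for contradiction some $h\in H-X$ satisfies $\bigcap X\subseteq h$. By Lemma \ref{lem:irregularity}, write $\vec{n_h}=\sum_{e\in X}\alpha_e\vec{n_e}$; evaluating at any point of $\bigcap X\subseteq h$ forces $r_h=\sum_{e\in X}\alpha_e r_e$, so identically
\[
\vec{n_h}\cdot\vec{x}-r_h=\sum_{e\in X}\alpha_e(\vec{n_e}\cdot\vec{x}-r_e).
\]
Strong shattering fixes a common sign $s_h\in\{+,-\}$ that $h$ takes at every witnessing cell. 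Choose $f\in\{+,-\}^X$ by $f(e)\defeq -s_h\cdot\sign(\alpha_e)$ when $\alpha_e\neq 0$ (and arbitrary otherwise). At a cell realizing this $f$, every term $\alpha_e(\vec{n_e}\cdot\vec{x}-r_e)$ has sign $-s_h$ or is zero, with at least one strictly nonzero since $\vec{n_h}\neq 0$. Summing, $\sign(\vec{n_h}\cdot\vec{x}-r_h)=-s_h$, contradicting $g(h)=s_h$.

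The heart of the argument is the displayed identity, which encodes ``$h$ depends affinely on $X$'' into a constraint forcing $h$'s sign on a cell to be a function of the signs on $X$. The main obstacle is the sign bookkeeping that turns this into a contradiction in the forward direction; the perturbation in the converse is essentially the independence argument that already established shattering, augmented by the regularity of $p$ to keep the sign pattern on $H-X$ stable.
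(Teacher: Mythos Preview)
Your proof is correct and follows essentially the same approach as the paper: both directions use Lemma~\ref{lem:regularity} with a perturbation argument for $(2)\Rightarrow(1)$, and Lemma~\ref{lem:irregularity} to express $\vec{n_h}$ as a combination of the $\vec{n_e}$'s for the contradiction in $(1)\Rightarrow(2)$. Your treatment of $(1)\Rightarrow(2)$ is in fact slightly cleaner than the paper's: rather than translating so that $\vec{0}\in\bigcap X$ and comparing two cells (one for $g$ and one for $-g$ on $X$), you derive the affine identity $\vec{n_h}\cdot\vec{x}-r_h=\sum_{e\in X}\alpha_e(\vec{n_e}\cdot\vec{x}-r_e)$ directly and force a contradiction at a single cell by choosing $f$ against the fixed witness value $s_h$; both routes are the same idea, yours just packages it more economically.
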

\begin{lemma}
Let $X\subseteq H$. Then
\begin{center}
 $X$ is independent and $H$-regular $\implies$ $\s_H$ strongly-shatters $X$. 
\end{center}
\end{lemma}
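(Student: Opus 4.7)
The plan is to use Lemma \ref{lem:regularity} to locate a witness point in $\bigcap X$ that avoids all other hyperplanes, and then perturb that point along suitable directions provided by independence of $X$ in order to realize every desired sign pattern on $X$ while preserving the (already fixed) pattern on $H-X$.

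First, since $X$ is independent and $H$-regular, Lemma \ref{lem:regularity} supplies a point $\vec{p}\in\bigcap{X}-\bigcup{(H-X)}$. Define $g\in\{+,-\}^{H-X}$ by
\[
g(e)\defeq \sign(\vec{n_e}\cdot \vec{p}-r_e),\qquad e\in H-X,
\]
which is well defined precisely because $\vec{p}$ lies on no hyperplane of $H-X$. I claim that this $g$ is the witness required by the definition of strong-shattering; that is, for every $f\in\{+,-\}^X$ there is a cell $c$ of $H$ with $f_c\restriction_X=f$ and $f_c\restriction_{H-X}=g$.

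The key step is the perturbation argument. Fix $f\in\{+,-\}^X$. Because $X$ is independent, the linear map $\vec{v}\mapsto (\vec{n_e}\cdot \vec{v})_{e\in X}$ from $\mathbb{R}^d$ to $\mathbb{R}^X$ is surjective, so I can pick $\vec{v}\in\mathbb{R}^d$ with $\sign(\vec{n_e}\cdot \vec{v})=f(e)$ for every $e\in X$ (e.g.\ choose the right-hand side to be $+1$ or $-1$ according to $f$). Consider the point $\vec{x}_\epsilon\defeq \vec{p}+\epsilon \vec{v}$ for small $\epsilon>0$. For $e\in X$, since $\vec{n_e}\cdot \vec{p}=r_e$, we get
\[
\vec{n_e}\cdot \vec{x}_\epsilon-r_e=\epsilon(\vec{n_e}\cdot \vec{v}),
\]
whose sign is exactly $f(e)$. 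For $e\in H-X$,
\[
\vec{n_e}\cdot \vec{x}_\epsilon-r_e=(\vec{n_e}\cdot \vec{p}-r_e)+\epsilon(\vec{n_e}\cdot \vec{v}),
\]
and the first summand is a fixed nonzero real of sign $g(e)$, so for all sufficiently small $\epsilon>0$ the sign of the whole expression is still $g(e)$. Taking $\epsilon$ smaller than the finite minimum required by the hyperplanes of $H-X$, the point $\vec{x}_\epsilon$ lies in a cell $c$ with $f_c=f\mymerge g$, finishing the proof.

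The main obstacle is really just the simultaneous control of two kinds of signs: the strict inequalities on $H-X$ must be preserved while the equalities on $X$ are broken in prescribed directions. Independence of $X$ lets me freely prescribe the breaking directions on $X$; $H$-regularity, via Lemma \ref{lem:regularity}, gives room inside $\bigcap X$ away from all other hyperplanes so that the strict inequalities on $H-X$ are robust under a small perturbation. After these two ingredients are in place, the rest is the standard ``small $\epsilon$'' argument sketched above.
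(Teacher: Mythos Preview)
Your proof is correct and follows essentially the same approach as the paper: both use Lemma \ref{lem:regularity} to obtain a point $\vec{p}\in\bigcap X\setminus\bigcup(H-X)$, define $g$ from the signs at $\vec{p}$, solve the linear system afforded by independence of $X$ to get a perturbation direction, and then take a sufficiently small $\epsilon$ so that $\vec{p}+\epsilon\vec{v}$ realizes $f\mymerge g$. The only cosmetic difference is notation (the paper calls your $\vec{v}$ by $\vec{d}$ and writes out $y_e=\pm 1$ explicitly).
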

\begin{proof}
According to Lemma \ref{lem:regularity} there exists $\vec{p}\in\bigcap{X}-\bigcup{(H-X)}$. Therefore:
	\[
	\forall e\in H-X: \lvert \vec{n_e}\cdot \vec{p} - r_e\rvert> 0.
	\]
Define $g\in\{+,-\}^{H-X}$ to be:
	$$
	g(e)\defeq \sign(\vec{n_e}\cdot \vec{p} - r_e)
	$$
It is enough to show that for all $f\in\{+,-\}^X$ there exists a cell $c$ such that $f_c=g\star f$. Let $f\in\{+,-\}^{X}$. Define  
$$y_e\defeq \begin{cases} 1  &\mbox{if }f(e)=+\,,\\
													-1 &\mbox{if }f(e)=-\,.
						\end{cases}$$
Consider the linear system 
	\[
	\vec{n_e}\cdot \vec{x} = y_e\mbox{ for all } e\in X
	\]
Since $X$ is independent, this system has a solution $\vec{d}$.\\
Pick $\epsilon>0$ sufficiently small such that
	\[
	\forall e\in E-X: \sign(\vec{n_e}\cdot \vec{p} - r_e)=\sign(\vec{n_e}\cdot (\vec{p}+\epsilon\vec{d}) - r_e).
	\]
Let $c$ denote the cell that contains $\vec{p}+\epsilon \vec{d}$. By the definitions of $\epsilon$ and $\vec{d}$ it follows that $f_c = g\star f$ as required.
\end{proof}
For the other direction:
\begin{lemma}
Let $X\subseteq H$. Then
\begin{center}
 $\s_H$ strongly-shatters $X$ $\implies$ $\s_H$ is independent and $H$-regular.
\end{center}
\end{lemma}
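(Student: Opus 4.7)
The plan is to deduce both conclusions from the hypothesis. First, strong-shattering trivially implies shattering, so Theorem~\ref{thm:main thm geometring str} immediately yields that $X$ is independent; in particular $\bigcap X \neq \emptyset$.

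For $H$-regularity I would argue by contradiction: assume there is some $h \in H - X$ with $\bigcap X \subseteq h$. Lemma~\ref{lem:irregularity} then supplies a decomposition $\vec{n_h} = \sum_{e \in X} \alpha_e \vec{n_e}$, and at least one $\alpha_e$ is nonzero since $\vec{n_h} \neq \vec{0}$. Choosing any $\vec{p} \in \bigcap X$ and using $\vec{p} \in h$ gives $r_h = \vec{n_h}\cdot\vec{p} = \sum_e \alpha_e r_e$, so for every $\vec{x} \in V$ one obtains the key identity
$$\vec{n_h}\cdot\vec{x} - r_h \;=\; \sum_{e \in X} \alpha_e \bigl(\vec{n_e}\cdot\vec{x} - r_e\bigr).$$
Now define $f_+ \in \{+,-\}^X$ by $f_+(e) = +$ if $\alpha_e \geq 0$ and $f_+(e) = -$ otherwise, and set $f_-(e) = -f_+(e)$. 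For any cell $c$ whose sign-vector on $X$ matches $f_+$, each summand on the right of the identity is $\geq 0$ and is strictly positive whenever $\alpha_e \neq 0$, forcing $\vec{n_h}\cdot\vec{x} - r_h > 0$ and thus $f_c(h) = +$; symmetrically, any cell matching $f_-$ satisfies $f_c(h) = -$.

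Strong-shattering now closes the argument: it provides a single $g \in \{+,-\}^{H-X}$ together with cells $c_+, c_-$ whose sign-vectors equal $g \star f_+$ and $g \star f_-$ respectively; but then $g(h)$ would simultaneously have to equal $+$ and $-$, a contradiction. The only real obstacle in this plan is arranging the linear identity so that the signs of the summands are jointly forced by the choice of $f$, and that is exactly what Lemma~\ref{lem:irregularity} together with the observation $r_h = \sum_e \alpha_e r_e$ accomplishes; once the identity is in place the sign-argument is essentially mechanical.
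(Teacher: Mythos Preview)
Your proof is correct and follows essentially the same approach as the paper's: both deduce independence from shattering, then assume $H$-irregularity, write $\vec{n_h}$ as a linear combination of the $\vec{n_e}$'s, choose the sign pattern on $X$ according to the signs of the coefficients $\alpha_e$, and derive a contradiction from the forced value of $f_c(h)$ on the two witness cells. The only cosmetic difference is that the paper first translates so that $\vec{0}\in\bigcap X$ (making $r_e=0$ for $e\in X$ and $r_h=0$), whereas you avoid that step by deriving the identity $\vec{n_h}\cdot\vec{x}-r_h=\sum_{e\in X}\alpha_e(\vec{n_e}\cdot\vec{x}-r_e)$ directly; your version also makes explicit that some $\alpha_e\neq 0$, which is needed for the strict inequality.
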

\begin{proof}
We will prove the contra-position of this claim. If $X$ is dependent then $\s_H$ does not shatter $X$. In particular, this means that $\s_H$ doesnt strongly-shatter $X$. Otherwise, assume $X$ is independnet and $H$-irregular. Therefore, by Lemma \ref{lem:str implies intersection} $\bigcap{X}\neq\emptyset$. Moreover, we may assume that $\vec{0}\in\bigcap{X}$ (by translating the space if needed). 

Since $X$ independent and $H$-irregular, it follows that there exists $h\in H-X$ such that $\bigcap{X}\subseteq h$. Thus, by Lemma \ref{lem:irregularity}, $\vec{n_h}\in \spn\{\vec{n_e}~:~e\in X\}$. which means that there exist coefficients $\{\alpha_e\in\mathbb{R}~:~e\in X\}$ such that 
$$\sum_{e\in X}{\alpha_e n_e = n_h}.$$
Note that $\vec{0}\in\bigcap{X}\subseteq h$ implies that $r_h=0$. Assume, by way of contradiction that $\s_H$ strongly-shatters $X$.
Let $$g(e)\defeq \begin{cases} + &\mbox{if }\alpha_e\geq 0\\
													 - &\mbox{otherwise }
						\end{cases}$$
and $$-g(e)\defeq\begin{cases} + &\mbox{if }g(e) = -\\
													 		 - &\mbox{otherwise. }
						\end{cases}$$
Since $X$ is strongly shattered by $\s_H$, there exist two open cells $c',c''$ such that:
	\begin{enumerate}[label=(\roman*)]
	\item{$f_{c'}$ agrees with $g$ on $X$}
	\item{$f_{c''}$ agrees with $-g$ on $X$}
	\item{$f_{c'}$ agrees with $f_c''$ on $H-X$} \label{enum: sstr item 3}
	\end{enumerate} 
We will derive a contradiction to item \ref{enum: sstr item 3} by showing that $f_{c'}(h)\neq f_{c''}(h)$.

Let $\vec{x'}\in c'$ and $\vec{x''}\in c''$.
	\begin{align*}
	f_{c'}(h)&=\sign(\vec{n_h}\cdot \vec{x'})\\
					 &=\sign([\sum_{e\in X}{\alpha_e \vec{n_e}}]\cdot \vec{x'}) & \sum_{e\in X}{\alpha_e \vec{n_e} = \vec{n_h}}\\
					 &=\sign(\sum_{e\in X}{\alpha_e [\vec{n_e}\cdot \vec{x'}}])\\
					 &=+ 																					 & \mbox{by definition of } f_{c'}: \alpha_e [\vec{n_e}\cdot \vec{x'}]\geq 0\\
					 &\,.
	\end{align*}
Similarly we have $f_{c''}(h)= -$ and we are done.
\end{proof}

\subsection{Counting cells}\label{subsection:counting cells}
The following lemma is a corollary of Theorems \ref{thm:main thm geometring str}, \ref{thm:main thm geometring sstr} and Lemma \ref{lem:irregularity}.
\begin{lemma}\label{lem:SE geometrical char}
Let $H$ be an arrangement of hyperplanes in $\mathbb{R}^d$. Then, the following statements are equivalent.
	\begin{enumerate}
	\item{$SE(\s_H)$}
	\item{$\forall X\subseteq H$: $X$ is independent $\iff$ $\bigcap{X}\neq\emptyset$.}
	\end{enumerate}
\end{lemma}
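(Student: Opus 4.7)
The plan is to translate $SE(\s_H)$ into a purely geometric condition on subsets of $H$ by applying the two characterizations from this section, and then to massage that condition into the form in Statement $2$.

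First I would rewrite Statement $1$. Since $SE(\s_H)$ is the equality $\str(\s_H)=\sstr(\s_H)$, Theorems \ref{thm:main thm geometring str} and \ref{thm:main thm geometring sstr} let me replace shattering by ``independent'' and strong-shattering by ``independent and $H$-regular''. As the inclusion $\sstr\subseteq\str$ is automatic, $SE(\s_H)$ is equivalent to the implication: for every $X\subseteq H$, if $X$ is independent then $X$ is $H$-regular. I will call this Statement $(\ast)$.

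Next I would show $(\ast)\Leftrightarrow$ Statement $2$. The direction ``independent $\Rightarrow$ $\bigcap X\neq\emptyset$'' is already established earlier in the chapter, so the content of Statement $2$ is the converse: $\bigcap X\neq\emptyset\Rightarrow X$ is independent. For $(\ast)\Rightarrow$ Statement $2$, assume $\bigcap X\neq\emptyset$ and choose a maximal independent $Y\subseteq X$. Any $e\in X-Y$ satisfies $\vec{n_e}\in\spn\{\vec{n_f}:f\in Y\}$, and picking a point $\vec{p}\in\bigcap X\subseteq\bigcap Y$ one sees that $\vec{n_e}\cdot\vec{x}=r_e$ for every $\vec{x}\in\bigcap Y$, so $\bigcap Y\subseteq e$. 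If $e\notin Y$ this contradicts the $H$-regularity of $Y$ given by $(\ast)$; hence $X-Y=\emptyset$ and $X=Y$ is independent. Conversely, for Statement $2$ $\Rightarrow(\ast)$, let $Y$ be independent and suppose some $h\in H-Y$ satisfies $\bigcap Y\subseteq h$. Then $\bigcap(Y\cup\{h\})=\bigcap Y\neq\emptyset$, so by Statement $2$ the set $Y\cup\{h\}$ is independent; but Lemma \ref{lem:irregularity} forces $\vec{n_h}\in\spn\{\vec{n_e}:e\in Y\}$, a contradiction.

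The only subtle step is the ``maximal independent subset'' argument in the first direction of the second equivalence, where one has to verify that $\bigcap Y\subseteq e$ for every $e\in X-Y$. This is where Lemma \ref{lem:irregularity} together with the existence of a common point in $\bigcap X$ is used, and it is the step I expect to spell out most carefully. Everything else is either a direct application of Theorems \ref{thm:main thm geometring str} and \ref{thm:main thm geometring sstr} or the observation that independence always implies a nonempty intersection.
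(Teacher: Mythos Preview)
Your proof is correct and follows exactly the route the paper indicates: the paper presents this lemma simply as ``a corollary of Theorems \ref{thm:main thm geometring str}, \ref{thm:main thm geometring sstr} and Lemma \ref{lem:irregularity}'' without spelling out any details, and your argument is precisely the natural way to unpack that corollary. The reduction of $SE(\s_H)$ to the statement ``independent $\Rightarrow$ $H$-regular'' via the two characterization theorems, together with the use of Lemma \ref{lem:irregularity} (and the easy fact that independence implies nonempty intersection) to pass to Statement $2$, is exactly what the paper has in mind.
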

Let $H$ be an arrangement of hyperplanes such that $SE(\s_H)$. By Theorem \ref{thm:SE char}, this means that $\lvert \s\rvert=\lvert \str(\s)\rvert$. Thus, by Lemma \ref{lem:SE geometrical char} and Theorem \ref{thm:main thm geometring str} we get the following result:
\begin{lemma}\label{lem:counting lemma}
Let $H$ be an arrangement of hyperplanes in $\mathbb{R}^d$ such that 
	\begin{center}
	$\forall X\subseteq H$: $X$ is independent $\iff$ $\bigcap{X}\neq\emptyset.$
	\end{center}
Let $C$ denote the number of cells in the arrangement. Then:
$$C=\lvert\{X\subseteq H~:~X\mbox{ is independent}\}\rvert.$$
\end{lemma}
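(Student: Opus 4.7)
The plan is to chain together three results already established in the excerpt, plus one trivial identity. First, I would verify the identity $\lvert \s_H\rvert = C$. By definition $S(\s_H)=\{f_c : c\text{ a cell of }H\}$, so this amounts to showing that the map $c\mapsto f_c$ is injective. This is immediate: two distinct cells must be separated by at least one hyperplane $e\in H$ (otherwise they would lie in the same connected component of $V-\bigcup H$, hence coincide), and on that $e$ their sign patterns disagree.

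Next, I would invoke Lemma \ref{lem:SE geometrical char}. The hypothesis of the current lemma is precisely Statement $2$ of that lemma, so it yields $SE(\s_H)$. Then, by Theorem \ref{thm:SE char} (the equivalence $\sstr(\s)=\str(\s) \iff \lvert\s\rvert=\lvert\str(\s)\rvert$), $SE(\s_H)$ gives
\[
C \;=\; \lvert\s_H\rvert \;=\; \lvert \str(\s_H)\rvert.
\]

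Finally, I would apply Theorem \ref{thm:main thm geometring str}, which identifies $\str(\s_H)$ with the family of independent subsets of $H$. This gives
\[
\lvert \str(\s_H)\rvert \;=\; \lvert\{X\subseteq H : X\text{ is independent}\}\rvert,
\]
and combining the two equalities completes the proof.

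There is no real obstacle here: the lemma is essentially bookkeeping on top of the heavy machinery already developed. The only points that require a line of verification are the injectivity of $c\mapsto f_c$ (straightforward from the definition of a cell) and a careful matching of the hypothesis of the current lemma with Statement $2$ of Lemma \ref{lem:SE geometrical char}.
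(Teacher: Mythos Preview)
Your proposal is correct and follows essentially the same route as the paper: invoke Lemma~\ref{lem:SE geometrical char} to get $SE(\s_H)$ from the hypothesis, then Theorem~\ref{thm:SE char} to get $\lvert\s_H\rvert=\lvert\str(\s_H)\rvert$, and finally Theorem~\ref{thm:main thm geometring str} to identify $\str(\s_H)$ with the independent subsets. The only addition you make is the explicit check that $c\mapsto f_c$ is injective, which the paper leaves implicit.
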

in particular, if the arrangement $H$ is in a general position (i.e. every $d$ hyperplanes intersect in a single point and no $d+1$ hyperplanes intersect), then it is easy to verify that $H$ satisfies the premise of Lemma \ref{lem:counting lemma} and that $$\{X\subseteq H~:~X\mbox{ is independent}\}=\{X\subseteq H~:~\lvert X\rvert\leq d\}.$$ This proves the well known result regarding the number of cells, $C$, in such an arrangement. Namely $C=\sum_{i=0}^{d}{{\lvert X\rvert}\choose{i}}$.

\subsection{Convex systems and some questions}\label{subsection:Questions}
Let $\s$ be a system. $\s$ is called {\it Euclidean} if there exist a Euclidean space, $\mathbb{R}^{d}$, and an arrangement of hyperplanes $H$ such that $\s=\s_H$.

Let $H$ be an arrangement of hyperplanes in $\mathbb{R}^{d}$ and let $\s_H$ be the corresponding system. Let $e\in H$ and let $\s_H',\s_H''$ be the two restrictions of $\s_H$ associated with $x$. It is easy to see that $\s_H'\syscup\s_H''$ corresponds to the arrangement $H'=H-\{e\}$ and that $\s_H'\syscup\s_H''$ corresponds to the arrangement $H'$ (of one dimension smaller) obtained as the intersection of $e$ with the remaining hyperplanes. Thus, Euclidean systems are closed under Boolean operators. It is not hard to prove that Euclidean systems are not closed under restrictions. However, the following generalization enables the representation of $\s_H',\s_H''$ in a geometric setup.

Let $K$ be an open convex set in $\mathbb{R}^{d}$. Let $$\s_{H,K}=\sys{\{f_c~:~c\mbox{ is a cell of }H,~c\cap K\neq\emptyset\}}{\{+,-\}^H}.$$
It is easy to see that choosing $K$ to be one of the half-spaces determined by $e$ yields a representation of $\s_H'$ and $\s_H''$ as $\s_{H,K}$.

This setup generalizes the setups investigated by Lawerence in \cite{Law} and by Welzl et al in \cite{Welzl}. The systems studied in \cite{Law} are obtained by choosing  $H=\big\{\{\vec{x}~:~x_i=0\}~:~1\leq i\leq d\big\}$ and $K$ to be arbitrary and the systems studied in \cite{Welzl} are obtained by choosing $H$ to be any general-position arrangement and $K=\mathbb{R}^d$.

The following two theorems can be proved similarly to the way in which we proved theorems \ref{thm:main thm geometring sstr}, \ref{thm:main thm geometring str}
\begin{theorem}
Let $X\subseteq H$. Then the following statements are equivalent
\begin{enumerate}
\item{$\s_{H,K}$ shatters $X$.}
\item{$X$ is independent.}
\item{$\bigcap{X}$ and $K$ share a common point.}
\end{enumerate}
\end{theorem}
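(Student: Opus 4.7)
The plan is to follow the blueprint of Theorem~\ref{thm:main thm geometring str}, inserting the convex body $K$ at the two steps where the geometry interacts with the hyperplane arrangement. I would establish $(1)\Rightarrow(2)$, $(1)\Rightarrow(3)$, and $(2)\wedge(3)\Rightarrow(1)$, which together give the claimed equivalence. For $(1)\Rightarrow(2)$, I would simply note that $S(\s_{H,K})\subseteq S(\s_H)$, so shattering by $\s_{H,K}$ is a fortiori shattering by $\s_H$; Theorem~\ref{thm:main thm geometring str} then delivers independence of $X$.

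For $(1)\Rightarrow(3)$, I would mimic Lemma~\ref{lem:str implies intersection}. Let $A\in\mathbb{R}^{X\times d}$ be the matrix with rows $\vec{n_e}$ for $e\in X$, let $\vec{r}\in\mathbb{R}^X$ have entries $r_e$, and consider the affine map $T(\vec{x})=A\vec{x}-\vec{r}$. Shattering by $\s_{H,K}$ yields, for each $f\in\{+,-\}^X$, a point $\vec{x}_f\in K$ with $\sign(T(\vec{x}_f))=f$. Lemma~\ref{lem:center is in ch} gives $\vec{0}\in ch(T(\{\vec{x}_f\}))=T(ch(\{\vec{x}_f\}))$, so some $\vec{x}\in ch(\{\vec{x}_f\})$ satisfies $T(\vec{x})=\vec{0}$, i.e.\ $\vec{x}\in\bigcap X$. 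Convexity of $K$ forces $\vec{x}\in K$, hence $\vec{x}\in\bigcap X\cap K$.

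For $(2)\wedge(3)\Rightarrow(1)$, I would pick $\vec{p}\in\bigcap X\cap K$ and any $f\in\{+,-\}^X$, and set $y_e=+1$ if $f(e)=+$ and $y_e=-1$ otherwise. Independence of $X$ lets me solve the linear system $\vec{n_e}\cdot\vec{d}=y_e$ for $e\in X$; openness of $K$ then allows picking $\epsilon>0$ small enough that $\vec{p}+\epsilon\vec{d}\in K$. Since $\vec{p}\in\bigcap X$, we have $\vec{n_e}\cdot(\vec{p}+\epsilon\vec{d})-r_e=\epsilon y_e$, so the cell containing $\vec{p}+\epsilon\vec{d}$ realizes the sign pattern $f$ on $X$ and meets $K$.

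The main obstacle is coordinating the two roles of $K$: convexity is needed so that the convex combination witnessing $\vec{0}\in T(ch(\{\vec{x}_f\}))$ stays inside $K$, while openness is needed so that perturbations of $\vec{p}$ realizing arbitrary sign patterns remain in $K$. Both assumptions on $K$ enter in dual ways and are indispensable; without them the generalization from $\s_H$ to $\s_{H,K}$ breaks down in either direction.
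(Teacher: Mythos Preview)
Your proposal is correct and follows exactly the route the paper indicates: the paper gives no explicit proof here but says the result ``can be proved similarly'' to Theorem~\ref{thm:main thm geometring str}, and you carry out precisely that adaptation, using convexity of $K$ in the convex-hull step of Lemma~\ref{lem:str implies intersection} and openness of $K$ in the perturbation step. One remark: as literally written the theorem lists three pairwise-equivalent items, but items~2 and~3 are not equivalent to each other in general; what you actually prove---and what the paper evidently intends, by analogy with the companion theorem on strong shattering---is $1\Leftrightarrow(2\wedge 3)$.
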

\begin{theorem}
Let $X\subseteq H$. Then the following statements are equivalent
\begin{enumerate}
\item{$\s_{H,K}$ strongly-shatters $X$.}
\item{$X$ is independent, $H$-regular.}
\item{$\bigcap{X}$ and $K$ share a common point.}
\end{enumerate}
\end{theorem}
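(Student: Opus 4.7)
The plan is to mirror the argument of Theorem \ref{thm:main thm geometring sstr} with each step adjusted so that all chosen witnesses lie inside the convex open set $K$. The key structural change is that only cells meeting $K$ contribute to $\s_{H,K}$, so the previous witness points (which were free to lie anywhere in $\mathbb{R}^d$) must now be selected in $K$, and openness of $K$ must be invoked to keep small perturbations inside. Since the statement collects three conditions, I would establish $1 \Rightarrow 2$, $1 \Rightarrow 3$ and finally $2 \wedge 3 \Rightarrow 1$.

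For the forward direction, strong-shattering implies shattering, so by the preceding theorem I immediately obtain that $X$ is independent and that $\bigcap X$ meets $K$, disposing of the independence part of 2 as well as 3. For $H$-regularity I would repeat the contra-positive argument used in Theorem \ref{thm:main thm geometring sstr}: assuming some $h \in H - X$ with $\bigcap X \subseteq h$, Lemma \ref{lem:irregularity} gives coefficients $\alpha_e$ with $\vec{n_h} = \sum_{e \in X} \alpha_e \vec{n_e}$; then any two cells $c',c''$ meeting $K$ which witness the $X$-sign patterns dictated by $\alpha_e$ and by $-\alpha_e$ respectively must disagree on $h$, contradicting the existence of a common sign pattern $g \in \{+,-\}^{H-X}$ consistent with every $X$-pattern.

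For the converse, I would first produce a suitable witness point. By $H$-regularity, for every $e \in H-X$ the set $\bigcap X \cap e$ is a proper affine subspace of $\bigcap X$; by condition 3, $\bigcap X \cap K$ is a non-empty relatively open subset of $\bigcap X$. The Baire-type argument from Lemma \ref{lem:regularity}, applied inside $\bigcap X \cap K$, yields a point $\vec{p} \in \bigcap X \cap K$ that avoids $\bigcup(H - X)$. I then define $g \in \{+,-\}^{H-X}$ by $g(e) = \sign(\vec{n_e}\cdot\vec{p} - r_e)$. For each $f \in \{+,-\}^X$, independence of $X$ lets me solve the linear system $\vec{n_e}\cdot\vec{d} = \pm 1$ (signs dictated by $f$) for a direction $\vec{d}$, and then I would pick $\epsilon > 0$ small enough that $\vec{p} + \epsilon \vec{d} \in K$ (using openness of $K$) and that the signs on $H - X$ at $\vec{p} + \epsilon \vec{d}$ are still $g$. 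The open cell containing $\vec{p} + \epsilon \vec{d}$ meets $K$ and realizes $g \star f$, establishing strong-shattering.

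The main obstacle is the converse: one must simultaneously place the witness point in $K$, keep it off every hyperplane of $H - X$, and take perturbations that also remain in $K$. Openness of $K$ handles the perturbations, but extracting a $\vec{p}$ that satisfies the first two requirements uses both $H$-regularity (to prevent any $e \in H - X$ from containing all of $\bigcap X$, so that each $\bigcap X \cap e$ is proper and the Baire argument applies) and the relative openness of $\bigcap X \cap K$ inside $\bigcap X$, which is provided by condition 3 together with openness of $K$.
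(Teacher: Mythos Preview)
Your proposal is correct and is exactly what the paper intends: the paper does not supply a proof here but only remarks that the result ``can be proved similarly'' to Theorems \ref{thm:main thm geometring str} and \ref{thm:main thm geometring sstr}, and your adaptation---choosing the witness $\vec{p}$ inside the relatively open set $\bigcap X \cap K$ via the Baire-type argument, and using openness of $K$ to keep $\vec{p}+\epsilon\vec{d}$ in $K$---is precisely the required modification. Your reading of the statement as $1 \iff (2 \wedge 3)$ is also the correct one.
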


The above characterizations of $\str,\sstr$ in this geometric setup yields a big class of $SE$ systems that rise from arrangements of hyperplanes in a Euclidean space. It is natural to ask whether every $SE$ system can be implemented in such a way. The following is a partial answer to this question.
\begin{theorem}
There exists an $SE$ system $\s$ such that there exists no arrangement of hyperplanes, $H$, such that $\s=\s_H$.
\end{theorem}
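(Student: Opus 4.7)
The plan is to exhibit a small, concrete SE system whose shatters complex violates the matroid exchange axiom, and then invoke the geometric characterization of $\str(\s_H)$ from Theorem \ref{thm:main thm geometring str} to rule it out as Euclidean.

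First I would take $\dim(\s) = \{1,2,3\}$ and specify the system via
\[
\set(\s) = \{\emptyset,\ \{1\},\ \{2\},\ \{3\},\ \{1,2\}\}.
\]
Since this family is closed under the subset relation, $\s$ is edge-sorted, and the edge-sorted lemma then gives $\sstr(\s) = \str(\s) = \set(\s)$ for free. In particular $\s$ is $SE$, with $|\s| = |\str(\s)| = 5$.

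Next I would suppose, for contradiction, that $\s = \s_H$ for some arrangement $H = \{e_1, e_2, e_3\}$ in $\mathbb{R}^d$, with normal vectors $\vec{n}_1, \vec{n}_2, \vec{n}_3$. By Theorem \ref{thm:main thm geometring str}, a subset $X \subseteq H$ lies in $\str(\s)$ precisely when its normals are linearly independent. From $\{i\} \in \str(\s)$ for each $i$ I get $\vec{n}_i \neq \vec{0}$; from $\{1,3\} \notin \str(\s)$ and $\{2,3\} \notin \str(\s)$ I get that $\vec{n}_3$ is a nonzero scalar multiple of both $\vec{n}_1$ and $\vec{n}_2$. Hence $\vec{n}_1$ and $\vec{n}_2$ are collinear, contradicting $\{1,2\} \in \str(\s)$.

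The conceptual content is that $\str(\s_H)$ must always be the independent-set complex of the linear matroid of the normals, and so must satisfy the matroid exchange axiom, whereas our $\str(\s)$ contains the size-two set $\{1,2\}$ together with the maximal singleton $\{3\}$ with no way to extend $\{3\}$ into a size-two independent set. I do not anticipate a real obstacle beyond choosing the example to be small enough that the edge-sorted lemma handles the $SE$ verification, leaving the geometric obstruction isolated in a single three-line argument.
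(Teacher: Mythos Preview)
Your proposal is correct and follows the same approach as the paper: the paper's proof simply observes that $\str(\s_H)$ is always a (linear) matroid by Theorem \ref{thm:main thm geometring str} and asserts that one can easily construct an $SE$ system whose $\str$ is not a matroid. You have supplied exactly such a concrete example and verified both its $SE$-ness (via the edge-sorted lemma) and the failure of the exchange axiom, which is precisely the detail the paper leaves implicit.
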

\begin{proof}
By Theorem \ref{thm:main thm geometring str}, for every $H$, it follows that $\str(S_H)$ is a matroid. It is easy to construct an $SE$-family $\s$ such that $\str(s)$ is not a matroid.
\end{proof}
However, the answer to the following question is not known.
\begin{question}
Let $\s$ be an $SE$ system. Is there an arrangement $H$ and a convex set $K$ such that $\s=\s_{H,K}$?
\end{question}
%\shay{TODO:
%\begin{enumerate}
%\item{Counting number of cells in a general position arrangement}
%\item{Maybe adding a short discussion about isometry}
%\item{Maybe adding closure properties of geometric systmes}
%\end{enumerate}
%}

  %Examples and applications

\newpage
\section{Orientations of an undirected graph}
This chapter presents a method which uses the tools we developed in order to prove certain equalities and inequalities involving undirected graphs and their orientations. Unlike most applications of these concepts, which use the "shattering" relation, this one demonstrates a usage of "strong-shattering". The method is demonstrated via two examples.

Henceforth, let $G=(V,E)$ be an arbitrary fixed undirected simple graph.
We represent an orientation of $E$ as a function $d:E\rightarrow\{+,-\}$. In order to enable this, pick a fixed orientation $\overrightarrow{E}$ of $E$.
Thus, a function $d:E\rightarrow\{+,-\}$ define an orientation of $E$, relative to $\overrightarrow{E}$, in the obvious way: If $d(e)=+$ then $e$ is oriented the same way as in $\overrightarrow{E}$. Else, if $d(e)=-$ then $e$ is oriented in the opposite way as in $\overrightarrow{E}$.
For an orientation $d$, let $G_d$ denote the directed graph obtained by orienting $E$ according to $d$.

For $X\subseteq E$, let $G_X\defeq(V,X)$ denote the undirected subgraph of $G$ with $X$ as its edges.

\subsection{Cyclic orientations}
The first example proves the following inequality:
\begin{theorem}\label{thm:first graphs example}
The number of orientations of $G$ that yield a directed cycle is at least the number of its subgraphs that have an undirected cycle.
\end{theorem}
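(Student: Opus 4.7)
The plan is to interpret cyclic orientations of $G$ as a system. Identifying $\{+,-\}$ with $\{0,1\}$ in the obvious way, let $\sysnot{A}$ be the system whose dimension is $E$ and whose vertices are exactly the orientations of $G$ that produce a directed cycle, i.e.
$$\sysnot{A}\defeq\sys{\{d\in\{+,-\}^{E}~:~G_d\text{ contains a directed cycle}\}}{\{+,-\}^{E}}.$$
Then $|\sysnot{A}|$ equals the left--hand side of Theorem \ref{thm:first graphs example}, and the Sandwich Theorem (Theorem \ref{thm:basic inequality}) gives $|\sstr(\sysnot{A})|\le|\sysnot{A}|$. So it will be enough to produce an injection from the collection of cyclic subgraphs of $G$ into $\sstr(\sysnot{A})$.

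For the injection I would use the complementation map $X\mapsto E-X$ on $\powset(E)$, which is obviously a bijection. The only substantive step is then the following combinatorial claim: if $G_X$ contains an undirected cycle then $E-X\in \sstr(\sysnot{A})$. Unwinding the definition of strong shattering, this amounts to exhibiting an orientation $g\in\{+,-\}^{X}$ of the edges inside $X$ such that, for \emph{every} orientation $f\in\{+,-\}^{E-X}$ of the remaining edges, the combined orientation $g\mymerge f$ belongs to $S(\sysnot{A})$, i.e.\ contains a directed cycle.

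The construction of $g$ is where the combinatorics enters, and it is short. Pick any undirected cycle $C\subseteq X$ of $G_X$, orient $C$ consistently so that its edges form a directed cycle, and extend $g$ arbitrarily on $X-C$. Since $C\subseteq X$ is already oriented as a directed cycle by $g$, any completion $f$ on the edges outside $X$ preserves this directed cycle, so $g\mymerge f\in S(\sysnot{A})$, as required. Combining the injection with the Sandwich Theorem yields
$$|\{X\subseteq E:G_X\text{ contains a cycle}\}|\le|\sstr(\sysnot{A})|\le|\sysnot{A}|,$$
which is exactly Theorem \ref{thm:first graphs example}.

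I do not anticipate a real obstacle: the only subtle point, and presumably the reason the chapter highlights this example, is that the argument is most naturally phrased via \emph{strong} shattering on the cyclic--orientation system $\sysnot{A}$, rather than via ordinary shattering on the dual system of acyclic orientations. A routine adaptation of the same plan---using an acyclic--oriented tree inside $X$ to extend a DAG--orientation of $X$ against all choices of $f$---would prove the analogous companion statement for $s$--$t$ paths advertised in the introduction.
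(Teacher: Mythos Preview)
Your proof is correct and follows essentially the same route as the paper: define the system $\sysnot{A}$ of cyclic orientations, show that if $G_X$ contains a cycle then $E-X\in\sstr(\sysnot{A})$ by orienting that cycle consistently, and then apply the Sandwich Theorem via the bijection $X\mapsto E-X$. The only difference is that the paper also proves the converse implication (if $E-X$ is a forest then $X\notin\sstr(\sysnot{A})$, via a topological-sort extension argument), obtaining the exact characterization $\sstr(\sysnot{A})=\{X:G_{E-X}\text{ has a cycle}\}$; this reverse direction is not needed for the inequality itself, but the paper uses it for the subsequent corollaries about $\neg\sysnot{A}$ and the (dual) VC-dimension.
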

Consider the system $\s\defeq\sys{\{d\in\{+,-\}^E~:~G_d\mbox{ has a directed cycle}\}}{\{+,-\}^E}$. The main step in this method is to characterize $\sstr(\s)$ or $\str(\s)$ (or both).
In this example, the following lemma characterize $\sstr(\s)$.
\begin{lemma}
Let $X\subseteq E$. Then $X\in \sstr(\s) \iff G_{E-X}\mbox{ has a cycle }$
\end{lemma}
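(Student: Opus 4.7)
The plan is to unfold the definition of strong shattering: $X\in \sstr(\s)$ means there is an orientation $g\in\{+,-\}^{E-X}$ such that, no matter how we orient $X$ via $f\in\{+,-\}^{X}$, the resulting directed graph $G_{g\star f}$ contains a directed cycle. We prove each direction separately.

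For the $(\Leftarrow)$ direction, suppose $G_{E-X}$ contains an undirected cycle $C$. Choose $g\in\{+,-\}^{E-X}$ that orients the edges of $C$ consistently so that $C$ becomes a directed cycle in $G_g$ restricted to $E-X$; the orientation of the remaining edges of $E-X$ is arbitrary. Then for any $f\in\{+,-\}^{X}$, the graph $G_{g\star f}$ contains this same directed cycle, so $g\star f\in S(\s)$. This witnesses $X\in\sstr(\s)$.

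For the $(\Rightarrow)$ direction I would argue by contrapositive: assume $G_{E-X}$ is acyclic, i.e., a forest. Let $g\in\{+,-\}^{E-X}$ be any orientation; since the underlying graph is a forest, the directed graph obtained from $g$ on $V$ is necessarily a DAG. Pick a topological ordering $\pi$ of $V$ with respect to $g$, i.e., a linear order such that every $g$-edge goes from a $\pi$-smaller endpoint to a $\pi$-larger endpoint. Now define $f\in\{+,-\}^{X}$ by orienting each $e=\{u,v\}\in X$ from the $\pi$-smaller endpoint to the $\pi$-larger one. Then \emph{every} edge of $G_{g\star f}$ respects $\pi$, so $G_{g\star f}$ is a DAG and has no directed cycle, i.e., $g\star f\notin S(\s)$. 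Since this $f$ exists for every choice of $g$, we conclude $X\notin\sstr(\s)$.

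I do not anticipate a substantive obstacle; the only care needed is to make sure the topological ordering argument is presented correctly, namely that a forest oriented arbitrarily is still a DAG and therefore admits a linear extension which can then be used to orient the edges of $X$ consistently. Once that observation is in place, both directions are essentially one-line constructions matching the quantifier pattern in the definition of $\sstr$.
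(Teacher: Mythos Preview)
Your proof is correct and follows essentially the same approach as the paper: for $(\Leftarrow)$ you orient a cycle in $E-X$ consistently, and for $(\Rightarrow)$ you use a topological sort of the DAG obtained from any orientation of the forest $E-X$ to orient the edges of $X$ acyclically. The paper's argument is identical up to notation.
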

\begin{proof}
If there exists a cycle, $C$, in $E-X$ then there exists an orientation of $E-X$, $d$, such that $C$ is oriented to a directed cycle. Clearly, every extension of $d$ to an orientation of $E$ contains this directed cycle. This means that $X\in \sstr(\s)$

For the other direction, assume that $E-X$ doesn't contain a cycle. To establish that $X\notin \sstr(\s)$, it is enough to show that:
$$(\forall d':(E-X)\rightarrow\{+,-\})(\exists d'':X\rightarrow\{+,-\}):G_{d'\star d''}\mbox{ has no directed cycle}.$$
Let $d':(E-X)\rightarrow\{+,-\}$. Since $E-X$ has no cycles, it follows that the orientation of $G_{E-X}$ according to $d'$ is a DAG whose edges form a pre-order, $P$, on $V$. Pick (by topological sorting) a linear order $L$ of $V$ that extends $P$. Let $d'':X\rightarrow\{+,-\}$ be the orientation of the remaining edges according\footnote{every edge is oriented towards the bigger vertex} to $L$. Clearly, the resulting orientation, $d'\star d''$, is a-cyclic. 
\end{proof}
Theorem \ref{thm:first graphs example}, easily follows from the Sandwich Theorem (Theorem \ref{thm:basic inequality}). 
It is easy to see that in some graphs the inequality is strict, as observed by taking $G$ to be a simple cycle. Indeed - for such $G$ there are two possible orientations that yield a directed cycle but only one $X\subseteq E$ such that $G_X$ has a cycle.

Consider the system $\neg\s$, namely the orientations of $G$ that yield an a-cyclic graph. By Lemma \ref{obs:dualStrSstr} it follows that
\begin{lemma}
Let $X\subseteq E$. Then $X\in \str(\neg\s) \iff G_{X}\mbox{ is a forest }$.
\end{lemma}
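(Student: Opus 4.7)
The plan is to derive this characterization directly from the preceding lemma (which characterizes $\sstr(\s)$) via the duality between shattering and strong shattering stated in Lemma \ref{obs:dualStrSstr}.

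First I would apply Lemma \ref{obs:dualStrSstr} not to $\s$ but to the system $\neg\s$, with the partition $\{X, E\setminus X\}$ of $\dim(\neg\s) = E$. Since the complement of $\neg\s$ is $\s$, the lemma tells us that exactly one of the following holds:
\begin{enumerate}
\item $\neg\s$ shatters $X$,
\item $\s$ strongly shatters $E\setminus X$.
\end{enumerate}
In particular, $X\in\str(\neg\s)$ if and only if $E\setminus X\notin\sstr(\s)$.

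Next I would feed this into the previous lemma, which asserts that for any $Y\subseteq E$, $Y\in\sstr(\s)$ iff $G_{E\setminus Y}$ contains a cycle. Setting $Y = E\setminus X$ and noting $E\setminus(E\setminus X)=X$, this reads: $E\setminus X\in\sstr(\s)$ iff $G_X$ contains a cycle. Substituting into the equivalence from the previous paragraph gives $X\in\str(\neg\s)$ iff $G_X$ contains no cycle, i.e.\ iff $G_X$ is a forest, which is the desired statement.

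There is no real obstacle here; the work is all done by the earlier characterization of $\sstr(\s)$ and the quantifier-swap duality of Lemma \ref{obs:dualStrSstr}, and the proof is essentially a one-line chain of equivalences. The only thing to watch is the direction in which one applies Lemma \ref{obs:dualStrSstr}, namely that one must apply it to $\neg\s$ so that its conclusion involves $\str(\neg\s)$ and $\sstr(\s)$ rather than $\sstr(\neg\s)$ and $\str(\s)$.
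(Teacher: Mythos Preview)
Your proof is correct and is precisely the argument the paper intends: the paper states this lemma with only the justification ``By Lemma~\ref{obs:dualStrSstr} it follows that\ldots'' and gives no further details, and your write-up is exactly the natural unpacking of that one-line reference---apply the duality lemma to $\neg\s$ with the partition $\{X,E\setminus X\}$ to get $X\in\str(\neg\s)\iff E\setminus X\notin\sstr(\s)$, then plug in the preceding characterization of $\sstr(\s)$. Your remark about needing to apply Lemma~\ref{obs:dualStrSstr} to $\neg\s$ rather than to $\s$ is also on point.
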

This, combined with the Sandwich Theorem, gives the following inequality:
\begin{theorem}
The number of orientations of $G$ that yield an a-cyclic graph is at most the number of its subgraphs that are forests.
\end{theorem}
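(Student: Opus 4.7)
The plan is to apply the Sandwich Theorem (Theorem \ref{thm:basic inequality}) directly to the system $\neg\s$ and then translate both sides of the resulting inequality into graph-theoretic language via the characterization of $\str(\neg\s)$ just established.

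First, I would invoke the upper half of the Sandwich Theorem on $\neg\s$ to obtain
$$|\neg\s|\ \leq\ |\str(\neg\s)|.$$
This step is completely free: it needs nothing beyond the general theory developed in Section \ref{sec:inequalities}.

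Next, I would rewrite both sides in terms of $G$. By the very definition of the system $\s$ (and hence of $\neg\s$), the left-hand side $|\neg\s|$ is exactly the number of functions $d\in\{+,-\}^E$ for which $G_d$ is acyclic, i.e.\ the number of acyclic orientations of $G$. The right-hand side is handled by the lemma immediately preceding the theorem, which identifies $\str(\neg\s)$ with the family $\{X\subseteq E : G_X\text{ is a forest}\}$; hence $|\str(\neg\s)|$ is the number of forest subgraphs of $G$. Substituting these two identifications into the sandwich inequality yields the claim.

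There is no real obstacle here: the Sandwich Theorem has already been proved in full generality, and the non-trivial combinatorial content — the description of $\str(\neg\s)$ in terms of forests — has been delivered by the preceding lemma (itself a direct application of Lemma \ref{obs:dualStrSstr} to the characterization of $\sstr(\s)$ in terms of cycles in $G_{E-X}$). The only thing to be careful about is the bookkeeping for the identification between subsets $X\subseteq E$ and their indicator functions in $\{+,-\}^E$, but this is entirely routine. Thus the proof reduces to a one-line chain combining the Sandwich Theorem with the forest characterization of $\str(\neg\s)$.
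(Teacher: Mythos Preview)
Your proposal is correct and is exactly the paper's approach: apply the upper half of the Sandwich Theorem to $\neg\s$ and then read off both sides via the preceding lemma identifying $\str(\neg\s)$ with the forests of $G$. There is nothing to add.
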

These characterizations of $\str(\neg\s)$ and $\sstr(\s)$ also give a natural interpretation of the VC-dimension and the {\it Dual VC-dimension}. The dual VC-dimension of a system is obtained by applying the Duality Tranformation on the text defining the VC-dimension:
\begin{definition}[Dual-VC-dimension]
The \deftext{Dual VC-dimension} (Vapnik Chervonenkis dimension) of a system $\s$, denoted $\dvc(\s)$, is the cardinality of the largest\footnote{As a special case, $\dvc(\s)=-1$ when $S(\s)=\emptyset$} subset that is strongly-shattered by it. 
\end{definition}
The next lemmas easily follows from the characterization of $\sstr(\s)$ and $\str(\neg\s)$. 
\begin{lemma}
Let $\lvert V\rvert=n$ and let $\lvert E\rvert=m$. Then
\begin{enumerate}
\item{$\vc(\neg\s)=k$ where $k$ is the size of a maximum subforest of $G$.}
\item{$\dvc(\s)=m-c$, where $c$ is the size of a smallest cycle in $G$.}
\end{enumerate}
\end{lemma}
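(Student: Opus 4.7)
The plan is straightforward: both statements are essentially optimization problems over the already-characterized families $\str(\neg\s)$ and $\sstr(\s)$. Recall that $\str$ and $\sstr$ are always closed under subsets, so the VC-dimension and dual-VC-dimension of a system amount to the maximum cardinality of any member of $\str$ (respectively $\sstr$).

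For statement $(1)$, I would use the characterization $\str(\neg\s) = \{X \subseteq E : G_X \text{ is a forest}\}$ established in the preceding lemma. By definition, $\vc(\neg\s)$ is the size of the largest subset of $E$ shattered by $\neg\s$, i.e., the largest $X \in \str(\neg\s)$. Since this collection is exactly the set of spanning subforests of $G$, its maximum-size element is a maximum subforest, so $\vc(\neg\s) = k$ immediately.

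For statement $(2)$, I would use the characterization $\sstr(\s) = \{X \subseteq E : G_{E-X} \text{ has a cycle}\}$ established in the previous lemma. Then $\dvc(\s) = \max\{|X| : G_{E-X} \text{ contains a cycle}\}$, which by complementation equals $m - \min\{|Y| : G_Y \text{ contains a cycle}\}$. The key observation is that the minimum number of edges in a subgraph containing a cycle is attained by taking exactly the edges of a shortest cycle, so $\min\{|Y| : G_Y \text{ contains a cycle}\} = c$, the girth. Thus $\dvc(\s) = m - c$.

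The only step that requires a brief argument rather than a direct unpacking of definitions is the claim that a minimum-size edge set whose induced subgraph contains a cycle consists precisely of the edges of a shortest cycle. This is immediate: if $G_Y$ contains a cycle $C$ of length $c$, then $G_{E(C)}$ also contains $C$, so one may restrict $Y$ to $E(C)$ without losing the cyclic condition, and conversely any $Y$ whose induced subgraph is cyclic must contain at least $c$ edges since every cycle in $G$ has length at least $c$. No obstacles of substance are expected; the entire proof is a one-line deduction from each of the two characterizations combined with the observation that the extremal parameters $\vc$ and $\dvc$ are the sizes of the largest members of $\str$ and $\sstr$ respectively.
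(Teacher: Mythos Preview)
Your proposal is correct and matches the paper's approach exactly: the paper simply states that the lemma ``easily follows from the characterization of $\sstr(\s)$ and $\str(\neg\s)$'' without further detail, and you have supplied precisely the expected unpacking of those characterizations together with the routine complementation argument for part~(2).
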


\subsection{Path preserving orientations}
The second example will demonstrate a proof for the following equality:
\begin{theorem}\label{thm:second graphs example}
Let $W\subseteq V$ and let $s\in W$. Then, the number of orientations of $G$ in which all vertices in $W$ are reachable from $s$ equals to the number of subgraphs of $G$ in which all vertices of $W$ lie in the same connected component.
\end{theorem}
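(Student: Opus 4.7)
The plan is to define the system
\[
\s\defeq\sys{\{d\in\{+,-\}^{E}~:~\text{every vertex of }W\text{ is reachable from }s\text{ in }G_d\}}{\{+,-\}^{E}}
\]
and follow the template of the previous example, but this time pushing all the way to an equality. Since the statement is an \emph{equality} rather than an inequality, the Sandwich Theorem alone will not suffice: my goal is to show that $\s$ is $SE$ and then use Theorem \ref{thm:SE char} together with a characterization of $\sstr(\s)$ to count.

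First I would characterize $\sstr(\s)$ by proving that $X\in\sstr(\s)$ iff every vertex of $W$ lies in the connected component of $s$ in the undirected subgraph $G_{E-X}$. The forward direction is easy: if $W\subseteq\text{comp}_s(G_{E-X})$, then an orientation $g$ of $E-X$ consisting of an out-arborescence from $s$ over that component reaches $W$ regardless of how $X$ is oriented. The reverse direction is the usual adversarial argument: if some $w\in W$ lies outside $\text{comp}_s(G_{E-X})$, then given any $g$, let $R_g\subseteq V$ be the set of vertices reachable from $s$ using $g$; every $X$-edge with exactly one endpoint in $R_g$ can be oriented inward by an appropriate $f$, which confines the reachable set of $g\mymerge f$ to $R_g\not\supseteq\{w\}$.

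Second, I would prove the analogous characterization for $\str(\s)$: $X\in\str(\s)$ iff every vertex of $W$ lies in the connected component of $s$ in $G_{E-X}$. The forward direction is again obvious since for any $f$ one can pick $g$ to be an out-arborescence from $s$ in that component. For the converse, suppose $W\not\subseteq\text{comp}_s(G_{E-X})=:R$. Because $R$ is a connected component of $G_{E-X}$, every edge of $E$ with one endpoint in $R$ and one outside must lie in $X$; orienting all of those edges inward in $f$ forces, for every $g$, the reachable set in $g\mymerge f$ to stay inside $R$, so $W$ cannot be reached.

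Combining the two characterizations gives $\str(\s)=\sstr(\s)$, i.e., $SE(\s)$, and Theorem \ref{thm:SE char} then yields $\lvert\s\rvert=\lvert\sstr(\s)\rvert$. Finally the bijection $X\mapsto Y\defeq E-X$ transforms $\sstr(\s)$ into exactly those $Y\subseteq E$ for which $W$ lies in the connected component of $s$ in $G_Y$; since $s\in W$, this is the same as requiring all of $W$ to sit in a single connected component of $G_Y$, which proves the theorem. I expect the main obstacle to be writing the $\str\subseteq\sstr$ direction cleanly, because it requires identifying the correct cut (the edges leaving $\text{comp}_s(G_{E-X})$) and arguing that the adversary's inward orientation remains effective against \emph{every} choice of $g$ on the remaining edges of $X$.
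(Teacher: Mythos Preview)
Your proposal is correct and follows essentially the same route as the paper: define the same system $\s$, show that both $\str(\s)$ and $\sstr(\s)$ coincide with $\{X\subseteq E: W\text{ lies in one component of }G_{E-X}\}$, deduce $SE(\s)$, and conclude equality of cardinalities. The only difference is economy: the paper packages this as a single three-way equivalence proved cyclically ($X\in\sstr(\s)\Rightarrow X\in\str(\s)\Rightarrow\text{connectivity}\Rightarrow X\in\sstr(\s)$), exploiting the free inclusion $\sstr(\s)\subseteq\str(\s)$, whereas you prove the two characterizations separately and hence do a little redundant work.
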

Let $\s$ denote the system $\sys{\{d\in\{+,-\}^E~:~\forall w\in W:G_d\mbox{ has an }s-w\mbox{ directed path}\}}{\{+,-\}^E}$
The following lemma characterizes $\str(\s)$ and $\sstr(\s)$. Moreover, it shows that $SE(\s)$.
\begin{lemma}\label{lem:digraphs second example SE}
Let $X\subseteq E$. Then, the following three statements are equivalent:
	\begin{enumerate}
	\item{$\s$ strongly shatters $X$}
	\item{$\s$ shatters $X$}
	\item{All vertices of $W$ lie in the same connected componenet of $G_{E-X}$}
	\end{enumerate}
\end{lemma}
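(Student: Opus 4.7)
The plan is to close a cycle of implications $1\Rightarrow 2\Rightarrow 3\Rightarrow 1$. The first is immediate since ``strongly shatters'' weakens to ``shatters'' by swapping a pair of quantifiers (as discussed after the two shattering definitions), so I only need to prove $2\Rightarrow 3$ and $3\Rightarrow 1$. My main tools are a cut argument (for $2\Rightarrow 3$) and a spanning-arborescence construction (for $3\Rightarrow 1$).

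For $2\Rightarrow 3$ I argue by contrapositive. Suppose not all of $W$ lies in one connected component of $G_{E-X}$. Let $A\subseteq V$ be the connected component of $s$ in $G_{E-X}$ and choose $w\in W\setminus A$. By definition of $A$, every edge with one endpoint in $A$ and the other in $V\setminus A$ belongs to $X$ (otherwise the component $A$ would be larger). Define $f\in\{+,-\}^X$ by orienting every such crossing edge from $V\setminus A$ into $A$, and orient the remaining edges of $X$ arbitrarily. Then for any $g\in\{+,-\}^{E-X}$, no directed path in $G_{g\star f}$ can leave $A$: edges of $E-X$ incident to $A$ stay inside $A$, while crossing edges in $X$ are all oriented into $A$. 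Hence $w$ is not reachable from $s$, so $g\star f\notin S(\s)$, witnessing that $\s$ does not shatter $X$.

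For $3\Rightarrow 1$, assume all of $W$ lies in a single connected component $C$ of $G_{E-X}$. Let $T\subseteq E-X$ be any spanning tree of $C$, and orient $T$ as the arborescence rooted at $s$ (every edge points away from $s$). Orient the remaining edges of $E-X$ arbitrarily to obtain $g\in\{+,-\}^{E-X}$. Then for any $f\in\{+,-\}^X$, the digraph $G_{g\star f}$ contains this arborescence, so $s$ has a directed path to every vertex of $C$, in particular to every $w\in W$. Thus $g\star f\in S(\s)$ for all $f$, which is exactly the definition of $\s$ strongly shattering $X$.

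The argument is not intricate; the only subtlety worth flagging is the observation that all $A$-to-$(V\setminus A)$ edges must lie in $X$ in the $2\Rightarrow 3$ step, which is what allows the cut-based $f$ to be chosen freely on exactly the crossing edges. Once these three directions are assembled, the lemma follows, and as the author notes it also yields $SE(\s)$ via Theorem \ref{thm:SE=Lopsided} (equivalently Theorem \ref{thm:SE char}).
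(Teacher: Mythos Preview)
Your proof is correct and follows essentially the same route as the paper: the contrapositive cut argument for $2\Rightarrow 3$ (orienting all $A$-to-$(V\setminus A)$ crossing edges into $A$) and the arborescence construction for $3\Rightarrow 1$ are exactly the paper's two moves, with your version being slightly more explicit about the spanning tree.
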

\begin{proof}
$1\implies 2$ is trivial.

Consider $2\implies 3$. To establish this implication, we prove its contra-position, namely we assume that not all vertices of $W$ lie in the same connected component of $E-X$ and we show that there exist $d\in\{+,-\}^X$ such that no orientation in $\s$ agrees with $d$ on $X$. Let $w\in W$ such that $w$ and $s$ lie in different components of $E-X$ and let $S$ denote the connected componenet of $S$ . Let $d$ denote an orientation of $X$ in which $w$ is not reachable from $s$ (to obtain such an orientation, orient all edges connecting $S$ and $V-S$ towards $S$). Clearly, every orientation of $E$, that agrees with $d$ on $X$ has no directed path from $S$ to $V-S$ and in particular no directed path from $s\in S$ to $w\in V-S$. Thus, every such orientation is not in $\s$. 

Consider $3\implies 1$. Assume that all vertices of $W$ lie in the same connected componenet of $G_{E-X}$. Let $d$ be an orientation of $E-X$ such that $s$ is a root of its connected component. Note that in particular this means that every $w\in W$ is reachable from $s$. Clearly, in every extension of $d$ to an orientation of $E$, every $w\in W$ is reachable from $s$. 
\end{proof}
Similarly to the first example, the Sandwich Theorem establishes Theorem \ref{thm:second graphs example}.

The following lemma is an easy corollary of Theorem \ref{lem:digraphs second example SE}.
\begin{lemma}
$\vc(\s)=m-t$ where $t$ is the size of a Steiner tree for $W$.
\end{lemma}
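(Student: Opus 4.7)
The plan is to unravel the definition of $\vc(\s)$ using the characterization of $\str(\s)$ that was just established in Lemma \ref{lem:digraphs second example SE}, and then translate the resulting optimization problem to its complementary form, where the Steiner tree appears naturally.

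First I would recall that $\vc(\s)$ is, by definition, the maximum cardinality of a set $X \subseteq E$ that is shattered by $\s$. By Lemma \ref{lem:digraphs second example SE}, $X$ is shattered by $\s$ precisely when all vertices of $W$ lie in a single connected component of $G_{E-X}$. Thus
\[
\vc(\s) = \max\bigl\{|X|~:~X\subseteq E,~\text{all vertices of }W\text{ lie in the same component of }G_{E-X}\bigr\}.
\]
Setting $Y := E - X$, this is equivalent to
\[
\vc(\s) = m - \min\bigl\{|Y|~:~Y\subseteq E,~\text{all vertices of }W\text{ lie in the same component of }G_{Y}\bigr\}.
\]

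Next I would argue that the minimum on the right is exactly $t$, the size of a minimum Steiner tree for $W$ in $G$. The inequality $\min \leq t$ is immediate: the edge set of any Steiner tree for $W$ is a valid $Y$. For the reverse inequality, given any $Y$ satisfying the condition, let $K$ be the connected component of $G_Y$ containing $W$, let $T$ be any spanning tree of $K$, and then repeatedly prune leaves of $T$ that do not lie in $W$; the resulting subtree is a Steiner tree for $W$ with at most $|Y|$ edges, so $t \leq |Y|$.

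Combining these two observations gives $\vc(\s) = m - t$. The argument is essentially a one-line translation of Lemma \ref{lem:digraphs second example SE}; the only step that requires any thought is the pruning argument identifying the minimum connecting edge set with the minimum Steiner tree, and this is completely standard.
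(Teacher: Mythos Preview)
Your argument is correct and is exactly the natural unpacking of what the paper intends: it states the lemma as ``an easy corollary'' of Lemma~\ref{lem:digraphs second example SE} without giving an explicit proof, and your computation---rewriting $\vc(\s)$ via the shattering characterization, complementing $X\mapsto Y=E-X$, and identifying the minimum connecting edge set with a minimum Steiner tree by the standard pruning argument---is precisely that corollary spelled out.
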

Consider the case in which $W=\{s,t\}$. In this case, $\vc(\s)=m-d$ where $d$ is the size of a shortest $s-t$ path in $G$. Moreover, $\neg\s$ is the set of all orientations that do not contain an $s-t$ directed path, and by Lemma \ref{obs:dualStrSstr} we obtain:
\begin{lemma}
The number of orientations that do not contain a directed path from $s$ to $t$ equals to the number of subsets of $E$ that separates $s$ and $t$. Moreover, $\vc(\neg\s)$ is $m-c$, where $c$ is the size of a minimum $s-t$ cut.
\end{lemma}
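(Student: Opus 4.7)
The plan is to exploit the fact that $\s$ is $SE$ (established in Lemma \ref{lem:digraphs second example SE}), which by Theorem \ref{thm:SE=Lopsided} implies $SE(\neg\s)$ as well. Consequently, by Theorem \ref{thm:SE char}, $\lvert\neg\s\rvert=\lvert\str(\neg\s)\rvert=\lvert\sstr(\neg\s)\rvert$, so the counting claim reduces to a purely combinatorial description of $\str(\neg\s)$, and the VC-dimension claim reduces to finding the largest member of this family.

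First I would translate the characterization of $\str(\s)$ from Lemma \ref{lem:digraphs second example SE} into a characterization of $\sstr(\neg\s)$ via Lemma \ref{obs:dualStrSstr}. Applied to the partition $\{E-Y,\,Y\}$ of $E$, that lemma gives: $Y\in\sstr(\neg\s)$ if and only if $\s$ does not shatter $E-Y$. By Lemma \ref{lem:digraphs second example SE} with $W=\{s,t\}$, the latter means that $s$ and $t$ lie in different connected components of $G_{E-(E-Y)}=G_Y$, i.e.\ there is no undirected $s$-$t$ path using only edges of $Y$, which is precisely saying that $E-Y$ is an $s$-$t$ edge cut. Because $\neg\s$ is $SE$, the same description holds for $\str(\neg\s)$.

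Next, the bijection $Y\mapsto E-Y$ maps $\str(\neg\s)$ onto the family of $s$-$t$ cuts of $G$. Combining with $\lvert\neg\s\rvert=\lvert\str(\neg\s)\rvert$ yields the first assertion: the number of orientations with no directed $s$-$t$ path equals the number of subsets of $E$ that separate $s$ from $t$. For the second assertion, $\vc(\neg\s)=\max\{\lvert Y\rvert:Y\in\str(\neg\s)\}=\max\{m-\lvert X\rvert : X\text{ is an }s\text{-}t\text{ cut}\}=m-c$, where $c$ is the minimum cut size.

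I do not foresee a real obstacle; the only delicate point is keeping track of the quantifier swap from Lemma \ref{obs:dualStrSstr}, which converts the shattering description of $\s$ into a strong-shattering description of $\neg\s$, together with the complementation $Y\leftrightarrow E-Y$. Everything else follows mechanically from $SE(\neg\s)$ and the already-established characterizations.
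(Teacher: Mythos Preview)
Your proposal is correct and follows essentially the same route as the paper: the paper simply says ``by Lemma~\ref{obs:dualStrSstr} we obtain'' the result, and what you have written is exactly the unpacking of that remark---using Lemma~\ref{obs:dualStrSstr} to convert the characterization of $\str(\s)$ from Lemma~\ref{lem:digraphs second example SE} into a description of $\sstr(\neg\s)=\str(\neg\s)$, and then reading off the count and the VC-dimension. There is nothing missing.
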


\subsubsection{An interesting class of $SE$ systems}
In the preceding chapters we showed that $SE$ systems are preserved by many operations (e.g complementing, restrictions, Boolean operators,...). A natural question is to ask whether the class of $SE$ systems is closed under union and intersection. It is not hard to design to $SE$ systems, $\s'$ and $\s''$ that demonstrates that it is not the case. However, there are some subclasses of $SE$ systems that are closed under these operations. One trivial example is the class of all edge-sorted systems. In this subsection we introduce another such class.

Pick some vertex $s\in V$ and let $v\in V-\{x\}$. Let $\s_{s,v}$ be the system of all orientations which yield a directed path from $s$ to $v$. Consider the class
$$\{\s_{s,v}~:~v\in V-\{s\}\}$$
By Lemma \ref{lem:digraphs second example SE} it follows that every system in this class is $SE$. Moreover, by the same lemma we get that any intersection of systems in this class is $SE$. It is not hard to prove, using the same method, that every union of elements in this class is $SE$. Therefore, this forms a non-trivial example for a class of $SE$ systems that is closed under unions and intersections.
  %Examples and applications
%\newpage
%\input{ }  %Conclusion and future work
\bibliographystyle{plain}
\bibliography{Shay}
\end{document}